\title{A cohomological characterisation of Yu's property A for metric spaces}
\def\EE#1#2{\text{$E^{#1}_{#2}$}}
\def\naturals{\mathbb{N}}
\def\integers{\mathbb{Z}}
\DeclareMathOperator{\supp}{Supp}
\def\HE{H_E}
\def\Ho{H_{E_0}}
\def\Epq{\mathcal E^{p,q}}
\def\E{\mathcal E}
\def\He{H_\E}
\def\HQ{H_Q}
\def\HQA{H_{QA}}
\def\HW{H_W}
\def\HWA{H_{WA}}
\def\HQW{H_{\sim}}
\def\EQ{\mathcal E_Q}
\def\EQA{\mathcal E_{QA}}
\def\EW{\mathcal E_W}
\def\EWA{\mathcal E_{WA}}
\def\XV{(X,\mathcal V)}
\def\J{\mathcal J}
\def\bx{\mathbf{x}}
\def\by{\mathbf{y}}
\def\bxy{(\bx,\by)}
\def\la{\langle}
\def\ra{\rangle}
\def \weakstar {\text{weak-*}\ }
\newcommand{\one}{\ell^1}
\newcommand{\bfy}{\mathbf{y}}
\newcommand{\bfx}{\mathbf{x}}
\newcommand{\calj}{\mathcal{J}}
\newcommand{\calv}{\mathcal{V}}
 \def \note#1{{ \color{red}#1}}
\newtheorem{thm}{Theorem}
\newtheorem*{mainthm}{Theorem \ref{MainTheorem1}}
\newtheorem*{mainthmtwo}{Theorem \ref{MainTheorem2}}
\newtheorem*{mainthmthree}{Theorem \ref{MainTheorem3}}
\newtheorem*{thm*}{Theorem}
\newtheorem{lemma}[thm]{Lemma}
\newtheorem{corollary}[thm]{Corollary}
\newtheorem{defn}{Definition}
\newtheorem{example}[defn]{Example}
\newtheorem{prop}[thm]{Proposition}
\newtheorem*{remark}{Remark}
\newcommand{\C}{\mathbb{C}}
\newcommand{\imp}{$\implies$}
\newcommand{\norm}[1]{{\|{#1}\|}}
\renewcommand{\Im}{\operatorname{Im}}
\newcommand{\skipit}[1]{\relax}
\def\note{\skipit}
\thanks{This research was partially supported by EPSRC grant  EP/F031947/1.}
\author{Jacek Brodzki}
\address{School of Mathematics, University of Southampton, Highfield, Southampton, SO17 1SH, England}
\email{J.Brodzki@soton.ac.uk}
\author{Graham A. Niblo}
\address{School of Mathematics, University of Southampton, Highfield, Southampton, SO17 1SH, England}
\email{G.A.Niblo@soton.ac.uk}
\author{Nick Wright}
\address{School of Mathematics, University of Southampton, Highfield, Southampton, SO17 1SH, England}
\email{N.J.Wright@soton.ac.uk}
\begin{document}

\begin{abstract}
Property A was introduced by Yu as a non-equivariant analogue of amenability. Nigel Higson posed the question of whether there is a homological characterisation of property A. In this paper we answer Higson's question affirmatively by constructing analogues of group cohomology and bounded cohomology, for a metric space $X$, and show that property A is equivalent to vanishing cohomology. Using these cohomology theories we also give a characterisation of property A in terms of the existence of an asymptotically invariant mean on the space.
\end{abstract}

\maketitle

A locally compact group $G$ is said to be \emph{amenable} if and only if it has an invariant mean \cite{vN}, that is, there exists $\mu\in \ell^\infty(G)^*$ such that $\mu(1)=1$ and $g\mu=\mu$ for all $g\in G$.

For a countable discrete group this is equivalent to the Reiter condition \cite{Reiter}:

For each $g\in G$ and each $n\in\naturals$ there is an element $f_n(g)\in \text{Prob}(G)$ of finite support with
\begin{enumerate}
\item $hf_n(g)=f_n(hg)$, and

\item for all $g_0,g_1$, \quad
$\|f_n(g_1) - f_n(g_0)\|_{\ell^1}{\rightarrow} 0 \text{ as }{n\rightarrow \infty}.$
\end{enumerate}

Ringrose and Johnson proved the following characterisation of amenability for a locally compact group, in terms of bounded cohomology.

\begin{thm*}
A group $G$ is amenable if and only if $H^q_b(G,V^*)=0$ for all $q\geq 1$ and all Banach $G$-modules $V$.
\end{thm*}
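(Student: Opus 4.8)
The plan is to prove both implications by the classical ``averaging'' argument of homological algebra, carried out inside the complex of \emph{bounded} inhomogeneous cochains $C^n_b(G,V^*)=\{\,c\colon G^n\to V^*\ \text{bounded}\,\}$ with the usual differential $\delta$.

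\emph{Amenable $\implies$ vanishing.} Suppose $\mu\in\ell^\infty(G)^*$ is an invariant mean, fix a Banach $G$-module $V$ and $q\geq 1$, and let $c\in C^q_b(G,V^*)$ be a bounded cocycle. I would define a bounded $(q-1)$-cochain $b$ by averaging out the last variable:
\[
b(g_1,\dots,g_{q-1})(v)\ :=\ (-1)^q\,\mu_h\!\big(c(g_1,\dots,g_{q-1},h)(v)\big),\qquad v\in V,
\]
which makes sense because $h\mapsto c(g_1,\dots,g_{q-1},h)(v)$ is a bounded scalar function on $G$; the estimate $|b(g_1,\dots,g_{q-1})(v)|\leq\|c\|_\infty\|v\|$ shows that $b(g_1,\dots,g_{q-1})\in V^*$ and that $b$ is bounded. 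Reading the cocycle identity $(\delta c)(g_1,\dots,g_q,h)(v)=0$ as an identity in the variable $h$ and applying $\mu$, the $q$ terms in which $h$ appears alone in the last slot reassemble (up to the sign $(-1)^q$) into the first $q$ terms of $(\delta b)(g_1,\dots,g_q)(v)$, while the remaining two terms $(-1)^q c(\dots,g_qh)$ and $(-1)^{q+1}c(g_1,\dots,g_q)$ combine, after using the invariance of $\mu$ to replace $h$ by $g_qh$, to give the last term of $\delta b$ minus $c(g_1,\dots,g_q)(v)$. Hence $\delta b=c$, so $H^q_b(G,V^*)=0$.

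\emph{Vanishing $\implies$ amenable.} Here only the case $q=1$ is needed, for one carefully chosen module. Give $\ell^\infty(G)$ its usual $G$-action; the constants $\mathbb{C}1$ form a closed submodule, so $W:=\ell^\infty(G)/\mathbb{C}1$ is a Banach $G$-module and $W^*$ is naturally identified with the annihilator $\{\nu\in\ell^\infty(G)^*:\nu(1)=0\}$. Pick any state $m\in\ell^\infty(G)^*$ (so $m\geq 0$, $m(1)=1$) and set $c(g):=g\cdot m-m$. Since $(g\cdot m)(1)=m(1)$ we have $c(g)\in W^*$ and $\|c(g)\|\leq 2$, while the computation $g_1\cdot c(g_2)-c(g_1g_2)+c(g_1)=0$ shows that $c$ is a bounded $1$-cocycle. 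By hypothesis $H^1_b(G,W^*)=0$, so there is $b\in W^*$ with $c(g)=g\cdot b-b$ for all $g$; then $\mu:=m-b\in\ell^\infty(G)^*$ satisfies $g\cdot\mu=\mu$ for all $g$ and $\mu(1)=m(1)-b(1)=1$. Thus $G$ carries a $G$-invariant functional $\mu$ with $\mu(1)=1$, and a standard argument (replace $\mu$ by $|\mu|/\|\mu\|$, using that the $G$-action is by homeomorphisms of the Gelfand spectrum and hence preserves total variation) upgrades this to an invariant mean. Hence $G$ is amenable.

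\emph{Where the work is.} There is no deep obstacle here, the statement being classical, but two points require care. In the first implication the bookkeeping of signs in $\delta b$ and, above all, the exact form of the substitution $h\mapsto g_qh$ by which invariance of $\mu$ cancels the leftover terms must be done precisely; the sign $(-1)^q$ in the definition of $b$ is chosen exactly so that this cancellation produces $+c$ rather than $-c$. In the second, the essential idea is to use the quotient module $W=\ell^\infty(G)/\mathbb{C}1$ rather than $\ell^\infty(G)$ itself: it is this that forces the invariant functional produced to satisfy $\mu(1)=1$ (and not merely $\mu(1)=0$), and so makes the argument close up. Everything else — that $C^\bullet_b(G,-)$ is a well-defined complex and that the constructions respect the module structures — is routine once the definition of bounded cohomology for Banach $G$-modules is in place; alternatively the first implication can be repackaged as the statement that the bounded bar resolution of $\mathbb{C}$ consists of relatively injective modules when $G$ is amenable.
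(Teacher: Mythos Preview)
The paper does not actually prove this theorem: it is quoted in the introduction as a classical result of Ringrose and Johnson (with a reference to \cite{Johnson}) and serves purely as motivation for the property~A analogues developed later. So there is no ``paper's own proof'' to compare against.

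That said, your argument is the standard Johnson proof and is correct. The averaging in the first implication is exactly the classical contracting homotopy on the bounded bar complex, and your sign bookkeeping checks out (applying $\mu_h$ to the cocycle identity $(\delta c)(g_1,\dots,g_q,h)=0$ and using left-invariance to replace $g_qh$ by $h$ does collapse to $\delta b=c$). For the converse, your use of $W=\ell^\infty(G)/\mathbb{C}1$ is the right move, and the upgrade from an invariant functional with $\mu(1)=1$ to an invariant mean via the Jordan decomposition (your ``$|\mu|/\|\mu\|$'') is standard. It is worth noting that the paper's own vanishing results for property~A, Theorems~\ref{Triv} and~\ref{TrivW}, are built on precisely the same idea: one convolves the non-equivariant splitting $s$ with a property~A witness (respectively an asymptotically invariant mean) to produce a genuine splitting of the asymptotically invariant complex. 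Your averaging map $c\mapsto b$ is the group-cohomology prototype of the maps $s_f$ and $s_\mu$ constructed there.
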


Moreover amenability is characterised by the vanishing of a specific class in $H^1_b(G,(\ell^\infty/\C)^*)$, namely the Johnson class $\calj(g_0,g_1)=\delta_{g_1}-\delta_{g_0}$. Here $\delta_g$ denotes the Dirac delta function supported at $g$ in $\one(G)$ which is included in $\one(G)^{**}\cong (\ell^\infty/\C)^*$ in the usual way. In \cite{ourpaper} we observed that in fact this characterisation also applies in classical (unbounded) group cohomology $H^1(G,(\ell^\infty/\C)^*)$.

Nigel Higson posed the question of whether there is a corresponding result for metric spaces and property A. Property A was introduced by Yu \cite{Yu}, as a non-equivariant analogue of amenability. The group action in the definition of amenability is replaced by a controlled support condition.

In this paper we answer Higson's question affirmatively by constructing analogues of group cohomology, bounded cohomology and the Johnson class, for a metric space $X$. These cohomologies have coefficients in a geometric module over $X$.

We use the following definition of property A. This is equivalent to Yu's original definition for spaces of bounded geometry \cite{HR}. 

\begin{defn}
\label{propAdef}
A metric space $X$ is said to have property A if for each $x\in X$ and each $n\in\naturals$ there is an element $f_n(x)\in \text{Prob}(X)$ and a sequence $S_n$ such that $\supp(f_n(x))\subseteq B_{S_n}(x)$ and for any $R\geq 0$
\[
\|f_n(x_1) - f_n(x_0)\|_{\ell^1}{\rightarrow} 0 \text{ as }{n\rightarrow \infty},
\]
uniformly on the set $\{ (x_0,x_1)\mid d(x_0,x_1)\leq R\}$.
\end{defn}

Note: This is an analogue of the Reiter condition for amenability. In Reiter's condition, uniform convergence, and the controlled support condition, follow from the pointwise convergence and finite support by equivariance. We view the convergence condition as asymptotic invariance in $x$ of the sequence $f_n(x)$.

Let $X$ be a metric space. An \emph{$X$-module} is a triple  $\mathcal V= (V, \|\cdot \|, Supp)$ where the pair $(V, \|\cdot \|)$ is a Banach space, and $\text{Supp}$ is a function from $V$ to the power set of $X$ satisfying the following axioms:

\begin{itemize}
\item $\text{Supp}(v)=\emptyset$ if  $v=0$,
\item $\text{Supp}(v+w)\subseteq  \text{Supp}(v)\cup \text{Supp}(w)$ for every $v,w\in V$,
\item $\text{Supp}(\lambda v)=\text{Supp}(v)$ for every $v\in V$ and every $\lambda\not=0$. 
\item if $v_n$ is a sequence converging to $v$ then $\supp(v)\subseteq \bigcup\limits_n \supp(v_n)$. \note {optional extra!}
\end{itemize}

\begin{example}
Let $V=\ell^1(X)$ equipped with the $\ell^1$- norm and for $f\in \ell^1(X)$ let $\text{Supp}(f)=\{x\in X\mid f(x)\not=0\}$.
\end{example}

Note that if $(W, \|\cdot\|)$ is a closed subspace of $(V, \|\cdot \|)$ then any function $\text{Supp}:V\mapsto 2^X$ satisfying conditions 1-3 above restricts to a function $\text{Supp}|_W$ so that $(W, \|\cdot\|, \text{Supp}|_W)$ is an $X$-module. We will consider the special case of the subspace $\ell^1_0(X)$ of  $\ell^1(X)$ consisting of functions $f$ such that  $\sum\limits_{x\in X}f(x)=0$, by analogy with the characterisation of amenability in terms of the vanishing of $\calj$.

If $X$ is equipped with a $G$ action for some group $G$, then we may also consider the notion of a $G$-equivariant $X$ module. This is an $X$-module $(V, \|\cdot\|, \text{Supp})$ equipped with an isometric action of $G$ such that $g\text{Supp}(v)=\text{Supp}(gv)$ for every $g\in G$ and every $v\in V$.

\skipit
{

In answer to a question of Nigel Higson we give several cohomological characterisations of Yu's property A via the introduction of suitable controlled cohomologies. The theories have equivariant and a non-equivariant versions, with the equivariant version characterising amenability for a group. We form a bicomplex $\E^{p,q}\XV$ associated to a space $X$ and an $X$-module $\calv$ (where these may or may not be equipped with a group action). We then proceed to `complete' the bicomplex in two different ways yielding two new cohomologies. We exploit the interaction between the two to give cohomological characterisations of property A in both theories, and to give another characterisation in terms of the existence of an asymptotically invariant mean on the space. These cohomology theories are in some sense analogous to group cohomology, and we also construct sub-complexes of both theories which correspond to bounded cohomology.


The equivariant theory is analogous to bounded cohomology in that it is zero if and only if the group is amenable, and both theories parallel Johnson's construction of bounded cohomology for a Banach module.
There is an alternative way to characterise amenability using Block and Weinberger's uniformly finite homology, \cite{BlockWeinberger}. In section \ref{exact} we explore the interaction between this homology and the classical bounded cohomology in the context of a discrete group and give purely cohomological proofs of Johnson's and Block and Weinberger's results in this context. \note{DO  WE WANT TO SAY SOMETHING HERE LIKE:  In a later paper we will generalise these ideas to study Yu's property A by introducing an analogous homology theory.}
}

\skipit{
In simplest terms the following diagram illustrates the components of the construction, with each row providing a cochain complex, with the final row providing the cohomology of our theory. 

\bigskip

\begin{center}
$\begin{CD}
\EE{m-1}{0}@>>> \EE{m}{0} @>>> \EE{m+1}{0}\\
@VVV @VVV @VVV \\
\EE{m-1}{ai}@>>> \EE{m}{ai} @>>> \EE{m+1}{ai}\\
@VVV @VVV @VVV \\
\EE{m-1}{ai}/ \EE{m-1} {0}@>>> \EE{m-1}{ai}/\EE{m}{0}@>>>\EE{m+1}{ai}/\EE{m-1}{0}\\
\end{CD}$
\end{center}

\bigskip
As we will later see this diagram is related to a family of natural bicomplexes which provide finer information about property A and provide a bridge between bounded cohomology and uniformly finite homology.

A curious feature of the setup is the introduction of the notion of an abstract support function for a Banach space to give it a ``module'' structure over a metric space. This is necessary since Yu's theory leverages control over supports to capture the local structure as well as the global coarse geometry. This stands in the tradition of controlled propagation, which, following the work of Roe and others, has done so much to illuminate K-theoretic aspects of coarse geometry.

}

We will construct two cohomology theories $\HQA^*\XV,\HWA^*\XV$. Both of these are analogues of bounded cohomology and they share many properties. The former has the virtue that it is straightforward to construct explicit cocycles, while the latter is more theoretical. For the $X$-module $\one_0(X)$, the cohomology groups both contain a Johnson class, and vanishing of this in either characterises property A, and moreover guarantees the vanishing of $\HQA^q\XV,\HWA^q\XV$ for all $q\geq 1$ and all $X$-modules $\calv$, Theorem \ref{MainTheorem2} and Theorem \ref{MainTheorem3}.

Vanishing of the Johnson class in $\HQA^1(X,\one_0(X))$ yields an asymptotically invariant Reiter sequence as in the above definition of property A, while its vanishing in $\HWA^1(X,\one_0(X))$ yields a new characterisation of property A in terms of the existence of an asymptotically invariant mean on the space $X$. As in the case of bounded cohomology and amenability, the vanishing theorem follows from an averaging argument, which utilises this asymptotically invariant mean.

A key step in the construction of $\HQA^*\XV,\HWA^*\XV$, is to construct two corresponding analogues of classical group cohomology $\HQ^*\XV,\HW^*\XV$. As in the case of a group, there are forgetful maps $\HQA^*\XV\to \HQ^*\XV,\HWA^*\XV\to\HW^*\XV$, and the vanishing of the images of the Johnson classes again characterises property A, Theorem \ref{MainTheorem1}.

As in \cite{ourpaper}, the equivalence of property A and vanishing of the Johnson elements in $\HQ^*\XV,\HW^*\XV$, is exhibited using a long exact sequence in cohomology, arising from a short exact sequence of coefficients:
$$0\to \one_0(X),\to \one(X)\to \C\to 0.$$
In each case, the Johnson class appears naturally as the image of the constant function 1 under the connecting map in the long exact sequence. The asymptotically invariant mean is a 0-cocycle for $\HW^*(X,\one(X))$, which is in the image of the forgetful map from $\HWA^*(X,\one(X))$.

There are also equivariant version of our cohomologies, when $X$ is a $G$-space, and $V$ a Banach $G$-module, for some group $G$. For convenience of exposition, we will assume throughout that we have such a $G$-action, allowing the possibility that $G=\{e\}$ as the non-equivariant case described above. In the case that $X=G$ is a countable discrete group with a proper left-invariant metric, equipped with the usual left-action, the cohomologies detect amenability.

The results in this paper are related in spirit to, but independent from the more recent results appearing in \cite{DN}. In their paper Douglas and Nowak prove that exactness of a finitely generated group $G$ (which by \cite{Oz} is equivalent to property A) implies vanishing of the bounded cohomology $H^q_b(G,V)$, for so-called Hopf $G$-modules of continuous linear operators with values in $\ell^\infty(G)$. They also give a characterisation of exactness in terms of the existence of an invariant conditional expectation on the group.


\skipit{
\begin{mainthm}
Let $X$ be a discrete  metric space. Then the following are equivalent:
\begin{enumerate}
\item $X$ has property $A$.
\item $[\mathbf 1_Q]\in \Im \pi_*$ in $\HQ^0(X, \C)$.
\item $D[\mathbf 1_Q]=0$ in $\HQ^1(X, \ell^1_0(X))$.
\item $D[\mathbf 1_W]=0$ in $\HW^1(X, \ell^1_0(X))$.
\item $[\mathbf 1_W]\in \Im \pi_*$ in $\HW^0(X, \C)$.
\item $X$ admits an asymptotically invariant mean.
\end{enumerate}
\end{mainthm}

\begin{mainthmtwo}
Let $X$ be a discrete metric space. Then the following are equivalent:
\begin{enumerate}
\item $\HQA^q(X, \mathcal V)= 0$ for all $q\geq 1$ and all modules $\mathcal V$ over $X$.
\item $[\J_Q^{0,1}]=0$ in $\HQA^1(X, \mathcal \ell^1_0(X))$.
\item $X$ has property $A$.
\end{enumerate}
\end{mainthmtwo}

\begin{mainthmthree}
Let $X$ be a discrete metric space. Then the following are equivalent:
\begin{enumerate}
\item $\HWA^q(X, \mathcal V)= 0$ for all $q\geq 1$ and all modules $\mathcal V$ over $X$.
\item $[\J_W^{0,1}]=0$ in $\HWA^1(X, \mathcal \ell^1_0(X))$.
\item $X$ has property $A$.
\end{enumerate}
\end{mainthmthree}

}

\skipit{

In fact the two theories will be defined in exactly the same way apart from the fact that the second invokes equivariance in every place where it makes sense to do so. The cohomology theory $H^m_B(G,\mathcal V)$ will be in some sense equivalent to the standard bounded cohomology theory once we have chosen appropriate modules. It is apparent that in fact it is sufficient for a particular cohomology group, indeed a particular class,  to be trivial in order to establish that the group is amenable. Classically this class lives in a space of means on the group, and vanishing of this class provides an invariant mean establishing amenability directly. In our theory vanishing of a specific class furnishes the group with a F\o lner sequence, which for a discrete group is equivalent to amenability. In the case of $H^1_A(X, \ell^1_0X)$,  vanishing of the class $\delta_z-\delta_y$ furnishes the space with a family of probability measures which exhibit property A. We note in passing that these functions are cochains in $E^{-1}_{ai}(X, \ell^1X)$.

}

\skipit{

\begin{section}{Homological characterisations of amenability}\label{exact}
The purpose of this section is to illuminate the relationship between the two following remarkable characterisations of amenability for a group. The definitions will follow the statements.

\begin{thm} (Ringrose and Johnson) A group $G$ is amenable if and only if $H^1_b(G,(\ell^{\infty}(G)/\C)^*)=0$
\end{thm}

\begin{thm} (Block and Weinberger) A group $G$ is amenable if and only if $H_0(G, \ell^\infty G)\not = 0$.
\end{thm}

It should be noted that both statements are part of a much larger picture. In the case of bounded cohomology vanishing of the first cohomology with the given coefficients is guaranteed by the triviality of a particular cocycle, and furthermore this ensures triviality of bounded cohomology with any coefficients in dimensions greater than or equal to $1$. In the case of Block and Weinberger's uniformly finite homology, vanishing of the zero dimensional homology group is guaranteed by the triviality of a fundamental class. It should also be noted that the definition they gave applies in the much wider context of an arbitrary metric space.

Recall the following definitions.
\begin{defn}
A \emph{mean} on a group $G$ is a positive linear functional $\mu$ on $\ell^\infty G$ such that $\norm{\mu}=1$. A group $G$ is \emph{amenable} if it admits a $G$-invariant mean.
\end{defn}

Recall that for a Banach space $V$ equipped with an isometric action of a group $G$, $C_b^m(G,V^*)$ denotes the $G$-module of equivariant bounded cochains $\phi:G^{m+1}\rightarrow V^*$. (Here bounded is defined by the Banach norm on the dual space $V^*$). This yields a cochain complex $(C_b^m(G,V^*), d)$ where $d$ denotes the natural differential induced by the homogeneous bar resolution. The cohomology of this complex is the bounded cohomology of the group with coefficients in $V^*$, denoted $H^*_b(G, V^*)$. For $V=\ell^\infty G/\C$ there is a particular class in dimension $1$which detects amenability which we will call the Johnson element. This is represented by the function 

\[
J(g_0, g_1)=\delta_{g_1}-\delta_{g_0},
\]

where $\delta_g$ denotes the Dirac delta function supported at $g$. Note that $J(g_0, g_1)$ lies in $\ell^1_0(G)$ which we view as a subspace of its double dual, $V^*$.

Dually we have the chain complex $(C_m^{\ell^1}(G,V), \partial)$, where $C_m^{\ell^1}(G,V)$ consists of equivariant functions $c:G^{m+1}\rightarrow V$ which are $\ell^1$ on the subspace $\{e\}\times G^m$. The boundary map is defined by 

\[
\partial c(g_0, \ldots, g_{m-1})=\sum\limits_{g\in G, i\in\{0,\ldots, m\} } (-1)^ic(g_0, \ldots, g_{i-1}, g, g_i, \ldots, g_{m-1}).
\]
 
 The homology of this complex is the $\ell^1$-homology of the group with coefficients in  $V$, denoted $H^{\ell^1}_*(G, V)$. Note that there is a map $H_*(G, V)\rightarrow H^{\ell^1}_*(G, V)$ given by the forgetful map. The fundamental class of Block and Weinberger in $H_0(G, \ell^\infty G)$ is represented by the cycle $c:G\rightarrow \ell^\infty G$ defined by $c(g)(h)=1$ for all $g,h\in G$. Applying the forgetful functor we obtain an element of $H^{\ell^1}_0(G, \ell^\infty G)$, and we will see that non-vanishing of this also characterises amenability. 
 
 We note that the pairing of $V^*$ with $V$, denoted $\langle-. -\rangle_V$ induces a pairing of $H^m_b(G, V^*)$ with $H_m^{\ell^1}(G, V)$ defined by 
 
 \[
 \langle[\phi], [c]\rangle = \sum\limits_{g_1,\ldots, g_m\in G}\langle\phi(e, g_1, \ldots, g_m), c(e, g_1, \ldots, g_m)\rangle_V.
 \]
 
It is clear that the pairing is defined at the level of cochains. To verify that it is well defined on classes one checks that the differential $d$ is the adjoint of the boundary map $\partial$.

The proof of the following result is a standard application of the snake lemma:

\begin{prop}\label{ses}
The short exact sequence of $G$-modules
$$0\to \C \xrightarrow{\iota} \ell^\infty G \xrightarrow{\pi} \ell^\infty G/\C \to 0.$$
induces a short exact sequence of chain complexes
$$0\to C_m^{\ell^1}(G,\C)\xrightarrow{\iota} C_m^{\ell^1}(G,\ell^\infty G) \xrightarrow{\pi} C_m^{\ell^1}(G,\ell^\infty G/\C)\to 0$$
and hence a long exact sequence of $\ell^1$-homology groups.

The short exact sequence of $G$-modules
$$0\to (\ell^\infty G/\C)^* \xrightarrow{\pi^*} \ell^\infty G^*  \xrightarrow{\iota^*} \C\to 0$$
induces a short exact sequence of cochain complexes
$$0\to C^m_{b}(G,(\ell^\infty G/\C)^*)\xrightarrow{\pi^*} C^m_{b}(G,\ell^\infty G^*) \xrightarrow{\iota^*} C^m_{b}(G,\C)\to 0$$
and hence a long exact sequence of bounded cohomology groups.
\qed
\end{prop}

Let $\mathbf 1$ denote the constant function $G\rightarrow \mathbb C$ which takes the value $1$ at every $g\in G$. This function represents classes in all of the following objects: $H^0_b(G, \C)$, $H_0(G, \C)$, $H^{\ell^1}_0(G, \C)$. Our point of view is that the Block-Weinberger fundamental class is $i[\mathbf 1]\in H_0(G, \ell^\infty G)$, while the Johnson cocycle is $d[\mathbf 1]\in H^1_b(G, (\ell^\infty G/\C)^*)$, where $d$ denotes the connecting map $H^0_b(G, \C)\rightarrow H^1_b(G, (\ell^\infty G/\C)^*)$. The first of these observations is elementary. For the second, note that $d[\mathbf 1]$ is obtained by lifting $\mathbf 1$ to the element $g\mapsto \delta_g$ in $C^0_b(G,(\ell^\infty G)^*)$ and taking the coboundary. This produces the Johnson cocycle $J(g_0,g_1)=\delta_{g_1}-\delta_{g_0}$.

By exploiting the connecting maps arising in Proposition \ref{ses} together with these observations we will obtain a new proof that $G$ is amenable if and only if the Johnson cocycle in bounded cohomology vanishes, and that this is equivalent to non-vanishing of the Block-Weinberger fundamental class. The first hint of the interaction is provided by the observation that dualising $H_0(G, \ell^\infty G)$ we obtain $H^0(G, \ell^\infty (G)^*)$ and that this is equal to $H^0_b(G, \ell^\infty (G)^*)$ since equivariance ensures that 0-cochains are bounded. The non-vanishing of $H^0_b(G, \ell^\infty (G)^*)$ is equivalent to amenability, since elements of  $H^0_{b}(G,\ell^\infty G^*)$ are maps $\phi:G\rightarrow \ell^\infty G^*$, which are $G$-equivariant and also, since they are cocycles, constant on $G$. Hence the value of a cocycle $\phi$ at any (and hence all) $g\in G$ is a $G$-invariant linear functional on $\ell^\infty G$. If $\phi$ is non-zero then taking its absolute value and normalising we obtain an invariant mean on the group. Conversely any invariant mean on the group is an invariant linear functional on $\ell^\infty G$ and hence gives a non-zero element of $H^0_{b}(G,\ell^\infty G^*)$.

\begin{thm}\label{beautiful}
Let $G$ be a countable discrete group. The following are equivalent:
\begin{enumerate}
\item \label{amen} $G$ is amenable.

\item \label{inv-mean} $\iota^*:H^0_{b}(G,\ell^\infty G^*) \to H^0_{b}(G,\C)$ is surjective.

\item \label{johnson} The Johnson class $d[\mathbf 1]$ vanishes in $H^1_{b}(G,(\ell^\infty G/\C)^*)$. 


\item \label{pairing} $\langle d[\mathbf 1],[c]\rangle =0$ for all $[c]$ in $H_1^{\ell^1}(G,\ell^\infty G/\C)$. (Hence for a non-amenable group, the non-triviality of $d[\mathbf 1]$ is detected by the pairing.)

\item \label{hom-last} $\iota[\mathbf 1] \in H_0^{\ell^1}(G,\ell^\infty G)$ is non-zero.


\item \label{B-W}The Block-Weinberger fundamental class $\iota[\mathbf 1] \in H_0(G,\ell^\infty G)$ is non-zero. 


\end{enumerate}
\end{thm}

\begin{proof}
(\ref{amen})\imp (\ref{inv-mean}) since $H^0_{b}(G,\C)=\C$, and for $\mu$ an invariant mean $i^*[\mu]=[\mathbf 1]$.

(\ref{inv-mean}) $\iff$ (\ref{johnson}): By exactness, surjectivity of $\iota^*$ is equivalent to vanishing of $d$, hence in particular this implies $d[\mathbf 1]=0$. The converse follows from the fact that $[\mathbf 1]$ generates $H^0_{b}(G,\C)$, so if $d[\mathbf 1]=0$ then $d=0$ and $\iota^*$ is surjective.

The implication 
(\ref{johnson})\imp (\ref{pairing}) is trivial.


(\ref{pairing}) $\implies$ (\ref{hom-last}): (\ref{pairing}) is equivalent to $\langle [\mathbf 1],\partial[c]\rangle =0$ for all $[c]$ in $H_1^{\ell^1}(G,\ell^\infty G/\C)$ by duality. We note that the space of 0-cycles in $C_0^{\ell^1}(G,\C)$ is $\C$, and noting that the pairing of the class $[\mathbf 1]$ in $H^0_{b}(G,\C)$ with the class $[\mathbf 1]$ in $H_0^{\ell^1}(G,\C)$ is $\langle [\mathbf 1],[\mathbf 1]\rangle =1$, we see that $[\mathbf 1]\in H_0^{\ell^1}(G,\C)$ is not a boundary. Thus $H_0^{\ell^1}(G,\C)=\C$ and the pairing with $H^0_{b}(G,\C)$ is faithful so $\langle [\mathbf 1],\partial[c]\rangle =0$ for all $[c]$ implies $\partial=0$. From this we deduce that $\iota$ is injective by exactness, hence we have (\ref{hom-last}): $\iota[\mathbf 1]$ is non-zero.



(\ref{hom-last})\imp (\ref{B-W}) since $\iota[\mathbf 1] \in H_0^{\ell^1}(G,\ell^\infty G)$ is the image of the corresponding element of $H_0(G,\ell^\infty G)$ under the forgetful map.

(\ref{B-W})\imp (\ref{amen}): We will use an argument due to Nowak. Let $\delta:C^0_{}(G,\ell^1(G))\to C^1_{}(G,\ell^1(G))$ denote the restriction of $d$. This is the pre-dual of $\partial$. First we note that  $\delta$ is not bounded below, since if it were then  $\partial=\delta^*$ would be surjective and $H_0(G, \ell^\infty G)$ would vanish giving $\iota[\mathbf 1]=0$, which is a contradiction.

The fact that $\delta$ is not bounded below is precisely the assertion that there is a Reiter sequence for the group and that therefore it is amenable. \note{Do we need to say what a Reiter sequence is?}

\end{proof}

As an example of this approach we give a proof of non-amenability for $F_2$ by constructing an explicit element $[c]\in H_1^{\ell^1}(G,\ell^\infty G/\C)$ for which $\langle d[\mathbf 1],[c]\rangle\not= 0$.

Let $\{a,b\}$ be a free basis for $F_2$, and let $\Gamma$ denote the Cayley graph of $F$ with respect to this generating set. $\Gamma$ is a tree and the action of $G$ on $\Gamma$ extends to the Gromov boundary. We choose a point $p$ in the Gromov boundary of $\Gamma$. For the sake of definiteness we set $p$ to be the endpoint of the ray $(a^n)$ where $n$ ranges over the positive integers, though this is not essential.

For a generator $g$ of $F_2$ (or its inverse) we set $c(e,g)(h)= 1$ if $(e, g)$ is the first edge on the geodesic from $e$ to $hp$ and set $c(e, g)(h)=0$ otherwise. Extending the definition by equivariance we obtain a function $c$ defined on the edges of $\Gamma$ with values in $\ell^\infty G$ and this represents an element $\overline c\in\ell^\infty G/\mathbb C$. 

Now consider $\partial c(e)=\sum\limits_{g\in\{a^{\pm 1}, b^{\pm 1}\}}c(g,e)-c(e,g)$.

For a given $h$ exactly one of the edges $(e,a), (e,b), (e, a^{-1}), (e, b^{-1})$ is the first edge on the geodesic $[e, hp]$, so the sum  $c(e,a)+ c(e,b)+ c(e, a^{-1})+c(e, b^{-1})$ is the constant function $\mathbf 1$ on $G$. 

On the other hand for a generator $g$, $c(g,e)(h)=1$ if and only if the edge $(g, e)$ is the first edge on the geodesic from $g$ to $hp$. We now consider the function $c(a,e)+c(b,e)+c(a^{-1}, e)+c(b^{-1},e)$. For a given $h\in G$ there is a unique point in the set $\{a,b, a^{-1}, b^{-1}\}$ which lies on the geodesic from $e$ to $hp$, and this is the only one for which the corresponding term of the sum takes the value $0$, so the sum $c(a,e)+c(b,e)+c(a^{-1}, e)+c(b^{-1},e)$ is the constant function $\mathbf 3$.

Hence $\partial c(e)=\mathbf 3-\mathbf 1=\mathbf 2$. Now by equivariance $\partial c(k)=\mathbf 2$ for all $k$, hence $\partial \overline c$ vanishes in $\ell^\infty (G)/\mathbb C$, so $\overline c$ is  a cycle and therefore represents an element $[\overline c]\in H_1^{\ell^1}(G,\ell^\infty G/\C)$.

We now compute the pairing $\langle d[\mathbf 1], [\overline c]\rangle$. 

\[
\langle d[\mathbf 1], [\overline c]\rangle=\langle [\mathbf 1], \partial [\overline c]\rangle=\langle [\mathbf 1], [\partial  c]\rangle = \langle [\mathbf 1], [\mathbf 2]\rangle=2.
\]


Hence $F_2$ is not amenable.

We conclude this section by noting that amenability is also equivalent to vanishing of the Johnson class as an element of  $H^1(G,(\ell^\infty G/\C)^*)$. To see this, replace the pairing of    $H^1_{b}(G,(\ell^\infty G/\C)^*)$  and $H_1^{\ell^1}(G,\ell^\infty G/\C)$ in the proof of Theorem \ref{beautiful} with the standard pairing of $H^1(G,(\ell^\infty G/\C)^*)$  and $H_1(G,\ell^\infty G/\C)$, hence deducing that vanishing of the Johnson element in $H^1(G,(\ell^\infty G/\C)^*)$ implies non-vanishing of the Block-Weinberger fundamental class:

\begin{thm}\label{unboundedbeauty}
Let $G$ be a countable discrete group. The following are equivalent:
\begin{enumerate}
\item $G$ is amenable.

\item  $\mathbf 1$ lies in the image of $i^*:H^0_{}(G,\ell^\infty G^*) \to H^0_{}(G,\C)$.

\item The Johnson class $d[\mathbf 1]$ vanishes in $H^1_{}(G,(\ell^\infty G/\C)^*)$. 

\end{enumerate}

\end{thm}

\end{section}

}

\begin{section}{The cochain complex}

Let $X$ be a metric space, $G$ be a group acting by isometries on $X$ and $\mathcal V=(V, \|\cdot\|_V, \text{Supp})$ be a $G$-equivariant $X$ module. Associated to this data we will construct a bicomplex $\Epq\XV$. This bicomplex also depends on the group $G$, however for concision we will generally omit $G$ from our notation.

This bicomplex is in some sense too small to detect property A, and we will construct two `completions' of the bicomplex to rectify this, yielding two cohomology theories $\HQ^p\XV$ and $\HW^p\XV$.

There are two principal cases of interest. When $G$ is trivial, we obtain non-equivariant cohomologies detecting property A. When $X=G$ is a group acting on itself by left multiplication and equipped with a proper left invariant metric, the cohomologies detect amenability for $G$.

For $\mathbf x\in X^{p+1}, \mathbf y \in X^{q+1}$, we make the standard convention that coordinates of $\mathbf x, \mathbf y$ are written $x_0,\dots,x_p$ and $y_0,\dots,y_q$.

For a positive real number $R$ let $\Delta_R^{p+1}$ denote the set $\{\mathbf x\in X^{p+1} \mid d(x_i, x_j)\leq R, \, \forall i,j\}$, and let $\Delta_R^{p+1,q+1}$ denote the set
$$\bigl\{\bxy\in X^{p+1}\times X^{q+1} \mid d(u,v)\leq R, \, \forall u,v\in\{x_0,\dots x_p,y_0,\dots,y_q\}\bigr\}.$$
Identifying $X^{p+1}\times X^{q+1}$ with $X^{p+q+2}$ in the obvious way, $\Delta_R^{p+1,q+1}$ can be identified with $\Delta_R^{p+q+2}$.

Given a function $\phi:X^{p+1}\times X^{q+1} \rightarrow V$   we set 
\[\|\phi\|_R=\sup_{\mathbf x \in \Delta_R^{p+1}, \mathbf y\in X^{q+1}} \|\phi(\mathbf x, \mathbf y)\|_V.\]




We say that a function $\phi$ is of controlled supports if for every $R>0$ there exists $S>0$ such that whenever $(\mathbf x,\mathbf y)\in \Delta^{p+1,q+1}_R$ then $\text{Supp}(\phi\bxy)$ is contained in $B_{S}(x_i)$ and $B_{S}(y_j)$ for all $i,j$.

The action of $G$ on $X$ extends to give diagonal actions on $X^{p+1}$ and $X^{q+1}$, and a function $\phi$ as above is said to be equivariant if for every $g\in G$

\[
g(\phi(\mathbf x,\mathbf y))=\phi(g\mathbf x, g\mathbf y).
\]

We define
\[
\Epq\XV=\{\phi:X^{p+1}\times X^{q+1} \rightarrow V \mid \text{ $\phi$ is equivariant, of controlled supports and } \|\phi\|_R< \infty\;  \forall R> 0 \}.
\]

We equip the space $\Epq\XV$ with the topology arising from the semi-norms $\|\cdot\|_R$. It seems natural to allow $R$ to range over all positive values, however we note that the topology this gives rise to is the same as the topology arising from the countable family of seminorms $\|\cdot\|_R$ for $R\in\naturals$. 


The usual boundary map $\partial: X^{m+1}\mapsto X^m$  induces a pair of anti-commuting coboundary maps which yields a bicomplex 

\[
\begin{CD}
@.@A{D}AA  @A{D}AA @A{D}AA\\
\qquad\qquad @.\E^{2,0} @>{d}>> \E^{2,1} @>{d}>> \E^{2,2} @>{d}>>\\
@A{p}AA @A{D}AA  @A{D}AA @A{D}AA\\
@.\E^{1,0} @>{d}>> \E^{1,1} @>{d}>> \E^{1,2} @>{d}>>\\
@.@A{D}AA  @A{D}AA @A{D}AA\\
@.\E^{0,0} @>{d}>> \E^{0,1} @>{d}>> \E^{0,2} @>{d}>>\vspace{1.5ex}\\
@.@. @>>{q}>
\end{CD}
\]
Specifically, $D: \E^{p,q}\rightarrow \E^{p+1,q}$ is given by
\[
 D\phi \left((x_0, \dots, x_{p+1}), \mathbf{y}\right) = \sum_{i=0}^{p+1}(-1)^{i}
\phi((x_0, \ldots, \widehat{x_i}, \ldots, x_{p+1}), \mathbf{y})
\]
while $d: \E^{p,q}\rightarrow \E^{p,q+1}$ is 
\[
d\phi(\mathbf{x}, (y_0, \dots, y_{q+1})) = \sum_{i=0}^{q+1} (-1)^{i+p} \phi(\mathbf{x}, (y_0, \dots, \widehat{y_i}, \dots, y_{q+1})).
\]

In Proposition \ref{acyclicrows} we will show that the rows of our bicomplex are acyclic, and for this reason it makes sense to consider an augmentation of the rows making them exact at $q=0$. We note that the definition of $\Epq$ and the maps $D:\E^{p,q}\to \E^{p+1,q}, d:\E^{p,q}\to \E^{p,q+1}$ make sense not just for positive $p,q$ but also when one of $p$ or $q$ is $-1$. We will be interested in $\E^{p,-1}\XV$ which we will identify as the augmentation of row $p$. The augmentation map is the differential $d\E^{p,-1}\to \E^{p,0}$.

Elements of $\E^{p,-1}\XV$ are maps $\phi:X^{p+1}\times X^0\to V$; for convenience of notation, we will suppress the $X^0$ factor, and write $\phi(\mathbf x)$ for $\phi(\mathbf x,())$. We note that the differential $d:\E^{p,-1}\to \E^{p,0}$  is defined by $d\phi(\mathbf x,(y))=\phi(\mathbf x,(\widehat y))$. Suppressing the empty vector we see that $d\phi(\mathbf x,(y))=\phi(\mathbf x)$, i.e.\  $d$ is the inclusion of $\E^{p,-1}\XV$ into $\E^{p,0}\XV$ as functions which are constant in the $y$ variable.

\begin{lemma}
The maps $D$ and $d$ are well-defined, continuous, anti-commuting differentials. 
\end{lemma}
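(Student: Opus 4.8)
The plan is to verify, in turn, each of the four claimed properties of $D$ and $d$: well-definedness (i.e.\ that $D\phi$ and $d\phi$ land back in the appropriate $\E^{p,q}$), continuity with respect to the seminorm topology, the anti-commutation relation $Dd + dD = 0$, and the differential identities $D^2 = 0$, $d^2 = 0$. Since $D$ and $d$ are both finite alternating sums of "face maps" $\phi \mapsto \phi \circ (\text{delete one coordinate})$, essentially everything reduces to checking that a single face map behaves well and then assembling the standard simplicial identities.

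First I would treat well-definedness. Fix $\phi \in \Epq\XV$ and consider $D\phi$. Each summand $((x_0,\dots,x_{p+1}),\by) \mapsto \phi((x_0,\dots,\widehat{x_i},\dots,x_{p+1}),\by)$ is clearly equivariant because $\phi$ is and the diagonal $G$-action commutes with coordinate deletion; by the support axioms for an $X$-module ($\supp(v+w)\subseteq\supp(v)\cup\supp(w)$ and $\supp(\lambda v)=\supp(v)$), $\supp(D\phi(\bx,\by))$ is contained in the union of the supports of the summands, so controlled supports for $D\phi$ follows from controlled supports for $\phi$: if $(\bx,\by)\in\Delta_R^{p+2,q+1}$ then each face $(x_0,\dots,\widehat{x_i},\dots,x_{p+1})$ together with $\by$ lies in $\Delta_R^{p+1,q+1}$, so each summand's support sits in the balls $B_S(x_j), B_S(y_k)$ (note the deleted coordinate $x_i$ is within $R$ of the surviving ones, so $B_S(x_j)$ for surviving $j$ suffices after enlarging $S$ to $S+R$). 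For the seminorm bound, $\|D\phi\|_R \le (p+2)\|\phi\|_R$ since restricting $(x_0,\dots,x_{p+1})$ to $\Delta_R^{p+2}$ forces each face to lie in $\Delta_R^{p+1}$ and we can drop the $\by$-constraint (the seminorm only constrains the $\bx$-variable). The argument for $d$ is identical except that here the $\by$-coordinates are being deleted and the $\bx$-coordinate is untouched; since $\|\cdot\|_R$ already takes a supremum over all $\by \in X^{q+1}$, we immediately get $\|d\phi\|_R \le (q+2)\|\phi\|_R$, and controlled supports and equivariance go through verbatim. Continuity of both maps is then immediate from these seminorm estimates: $D$ and $d$ are linear maps bounded on each seminorm by a fixed multiple of the same-index seminorm on the source, which is exactly continuity in the topology generated by the $\|\cdot\|_R$.

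Next, the algebraic identities. These are the classical simplicial/bar-complex identities and carry over with no change, since the sign conventions $(-1)^i$ for $D$ and $(-1)^{i+p}$ for $d$ are precisely those that make a double complex out of two bar-type differentials. Concretely: $D^2 = 0$ is the usual cancellation-in-pairs argument for the alternating sum of face maps in the $\bx$-variable; $d^2 = 0$ is the same in the $\by$-variable (the extra global sign $(-1)^p$ is constant along a row and contributes $(-1)^{2p}=1$ to $d^2$, so it does not interfere); and $Dd + dD = 0$ because deleting an $x$-coordinate and a $y$-coordinate are operations on disjoint sets of variables that literally commute as maps, while the sign bookkeeping — the $(-1)^p$ in $d$ on row $p$ versus $(-1)^{p+1}$ on row $p+1$ — introduces exactly the sign flip needed for anti-commutation rather than commutation. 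I would state this as: for a function $\phi$ of the relevant type, $dD\phi$ and $Dd\phi$ are sums over the same index set of the same terms up to an overall sign of $-1$, which one reads off by comparing $(-1)^{i+p+1}$ (from $d$ applied after $D$, now on row $p+1$) with $(-1)^{i+p}$ (from $d$ applied first). I'd present this as a one-line sign check rather than writing out the double sums.

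The only real subtlety — and the step I would flag as needing slight care — is the interaction of the support axioms with the "controlled supports" condition in the presence of the deleted coordinate: one must confirm that the constant $S$ producing the support bound can be chosen uniformly in $R$ in the right way, i.e.\ that enlarging to account for the distance $R$ between the deleted coordinate and the survivors does not break the definition. This is harmless because the definition of controlled supports already quantifies "for every $R$ there exists $S$", so replacing $S$ by $S+R$ is still an admissible choice for the given $R$. I would also remark that the degenerate cases $p=-1$ or $q=-1$ (the augmented rows discussed just before the lemma) are covered by exactly the same computations, with empty sums and the convention $\phi(\bx,())=\phi(\bx)$, so the augmentation map $d:\E^{p,-1}\to\E^{p,0}$ is continuous and the augmented rows are genuine complexes; this requires no new ideas.
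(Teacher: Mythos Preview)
Your proof is correct and follows essentially the same approach as the paper: the seminorm estimate $\|D\phi\|_R\le(p+2)\|\phi\|_R$, the controlled-supports argument with $S$ enlarged to $S+R$ to cover the deleted coordinate, and the symmetric treatment of $d$ all match the paper's proof exactly. The only difference is that the paper dismisses the algebraic identities $D^2=d^2=Dd+dD=0$ as ``standard'' in one sentence, whereas you spell out the sign bookkeeping; this is harmless extra detail rather than a different method.
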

\begin{proof}
The fact that $D$ and $d$ are anti-commuting differentials on the larger space of \emph{all} equivariant functions from $X^{p+1}\times X^{q+1}$ to $V$ is standard. We must show that $D,d$ preserve finiteness of the semi-norms, and controlled supports. We note that $\|D\phi\|_R \leq (p+2) \|\phi\|_R$ by the triangle inequality, and a corresponding estimate holds for $\|d\phi\|_R$. Hence $D,d$ are continuous, and the semi-norms are finite as required. 

For $\phi$ of controlled supports we now show that $D\phi$ is of controlled supports. Given $R>0$, take $\bxy \in \Delta^{p+2,q+1}_R$. Since $\phi$ is of controlled supports, there exists $S$ such that $\supp(\phi((x_0, \ldots, \widehat{x_i}, \ldots, x_{p+1}), \mathbf{y}))$ is contained in $B_{S}(x_{i'})$ and $B_S(y_j)$ for all $i'\neq i$, and for all $j$. Since for any $i'\neq i$ we have $d(x_i,x_{i'})\leq R$ we deduce that  $\supp(\phi((x_0, \ldots, \widehat{x_i}, \ldots, x_{p+1}), \mathbf{y}))$ lies in $B_{S+R}(x_{i'})$ for all $i'$. By the axioms for $\supp$ the support of $D\phi$ is contained in $B_{S+R}(x_{i'})$ and $B_S(y_j)$ for all $i'$ and all $j$, since this holds for the summands.

The argument for $d\phi$ is identical, exchanging the roles of $\mathbf x,\mathbf y$.
\end{proof}
\bigskip

Let $\He^*\XV$ denote the cohomology of the totalisation of the bicomplex $\E^{p,q}, p,q\geq 0$, with the differentials $D,d$.

\begin{remark}
If $X$ is equipped with two coarsely equivalent $G$-invariant metrics $d,d'$ then for any module over $X$ the controlled support conditions arising from these metrics are the same. Moreover the family of semi-norms is equivalent in the sense that for each $R$ there is an $S$ such that $\|\cdot\|_{R,d}\leq \|\cdot\|_{S,d'}$ and for each $R$ there is an $S$ such that$\|\cdot\|_{R,d}\leq \|\cdot\|_{S,d'}$. Hence the bicomplexes and the cohomology we obtain from each metric are identical. This applies in particular if $X=G$ is a countable group and the two metrics are both left-invariant proper metrics on $G$.
\end{remark}

We will now demonstrate exactness of the rows. This allows the cohomology of the totalisation to be computed in terms of the left-hand column.

\begin{prop}\label{acyclicrows}
For each $p$ the augmented row $(\E^{p,*},d)$, $p\geq -1$ is exact.

Specifically, for all $p\geq 0$ there is a continuous splitting $s:\E^{p,q}\to \E^{p,q-1}$ given by
\[
s\phi((x_0,\dots,x_p),(y_0,\dots,y_{q-1}))= (-1)^p\phi((x_0,\dots,x_p),(x_0,y_0,\dots,y_{q-1})).
\]
We have $(ds+sd)\phi=\phi$ for $\phi\in \Epq$ with $p\geq 0$, and $sd\phi=\phi$ for $\phi$ in $\E^{p,-1}$
\end{prop}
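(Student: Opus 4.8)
The plan is to verify the chain-homotopy identity $(ds+sd)\phi=\phi$ by direct computation, and to check separately that $s$ lands in the correct space (preserves equivariance, controlled supports and finiteness of the semi-norms) so that exactness of the augmented rows follows formally. The map $s$ is the standard ``cone on the first $x$-coordinate'' operator, so the combinatorial identity is essentially the usual simplicial contraction; the only genuine work is checking that $s$ respects the side conditions defining $\Epq$.

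First I would do the algebra. Writing out $d s\phi$ and $s d\phi$ from the formulas for $d$ and $s$, one gets, for $\phi\in\Epq$ with $p\ge 0$ and an argument $(\mathbf x,(y_0,\dots,y_q))$:
\[
ds\phi(\mathbf x,(y_0,\dots,y_q))=(-1)^{p}\sum_{i=0}^{q}(-1)^{i+p}s\phi(\mathbf x,(y_0,\dots,\widehat{y_i},\dots,y_q)),
\]
and expanding $s\phi$ this becomes $\sum_{i=0}^{q}(-1)^{i}\phi(\mathbf x,(x_0,y_0,\dots,\widehat{y_i},\dots,y_q))$. Similarly
\[
sd\phi(\mathbf x,(y_0,\dots,y_q))=(-1)^{p}\,d\phi(\mathbf x,(x_0,y_0,\dots,y_q))
=\sum_{j=0}^{q+1}(-1)^{j}\phi(\mathbf x,(x_0,y_0,\dots,y_q)\text{ with }j\text{th entry omitted}).
\]
The $j=0$ term of $sd\phi$ is exactly $\phi(\mathbf x,(y_0,\dots,y_q))$, and the terms with $j\ge 1$ cancel termwise against the terms of $ds\phi$ (after the index shift $j=i+1$, matching signs $(-1)^{i+1}$ against $(-1)^{i}$). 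Hence $(ds+sd)\phi=\phi$. The degenerate case $p\ge 0$, $q=-1$: here $d\phi(\mathbf x,(y))=\phi(\mathbf x)$ is constant in $y$, so $sd\phi(\mathbf x)=(-1)^p d\phi(\mathbf x,(x_0))=\phi(\mathbf x)$, giving $sd\phi=\phi$ as claimed; and there is no $s\phi$ term below degree $-1$.

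Next I would check that $s$ actually maps $\Epq$ to $\E^{p,q-1}$ and is continuous. Equivariance is immediate since $g(x_0,y_0,\dots,y_{q-1})=(gx_0,gy_0,\dots,gy_{q-1})$. For the semi-norms, note that $\|s\phi\|_R=\sup_{\mathbf x\in\Delta_R^{p+1},\,\mathbf y\in X^q}\|\phi(\mathbf x,(x_0,\mathbf y))\|_V$; since $\mathbf x\in\Delta_R^{p+1}$, the augmented tuple still has its $x$-block $R$-close, so this is bounded by $\|\phi\|_R$, giving both finiteness and continuity ($\|s\phi\|_R\le\|\phi\|_R$). For controlled supports, given $R>0$ pick $S$ witnessing controlled supports for $\phi$ at radius $R$; if $(\mathbf x,(y_0,\dots,y_{q-1}))\in\Delta_R^{p+1,q}$ then $(\mathbf x,(x_0,y_0,\dots,y_{q-1}))\in\Delta_R^{p+1,q+1}$, so $\supp(\phi(\mathbf x,(x_0,\mathbf y)))\subseteq B_S(x_i)\cap B_S(y_j)$ for all $i,j$ — in particular inside $B_S(x_i)$ and $B_S(y_j)$ in the sense required for $s\phi$. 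Thus $s\phi\in\E^{p,q-1}$ and $s$ is continuous.

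I expect no serious obstacle here: the content is routine once the bookkeeping of signs is set up carefully, and the side-condition checks are all of the ``same radius works'' type because $s$ only duplicates an existing coordinate. The one place to be attentive is the sign matching in the cancellation between $ds\phi$ and the $j\ge1$ part of $sd\phi$, where the factors $(-1)^p$ from $s$ and $(-1)^{q\text{-index}+p}$ from $d$ must be tracked so that the extra $(-1)^p$'s cancel in pairs; writing both sums with the substitution $j=i+1$ and comparing signs term by term settles it. Finally, combining $(ds+sd)=\mathrm{id}$ on $\Epq$ ($p\ge0$, $q\ge0$) with $sd=\mathrm{id}$ on $\E^{p,-1}$ gives that each augmented row $(\E^{p,*},d)$, $p\ge-1$, is contractible, hence exact, which is the assertion of the proposition.
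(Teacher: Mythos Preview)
Your proposal is correct and follows the same approach as the paper: verify that $s$ is a chain contraction on the larger space of all equivariant functions (which the paper simply calls ``standard homological algebra'' without writing out), then check continuity via $\|s\phi\|_R\le\|\phi\|_R$ and controlled supports via the observation that $(\mathbf x,(y_0,\dots,y_{q-1}))\in\Delta_R^{p+1,q}$ implies $(\mathbf x,(x_0,y_0,\dots,y_{q-1}))\in\Delta_R^{p+1,q+1}$. One cosmetic slip: your displayed formula for $ds\phi$ has a stray leading $(-1)^p$ (the correct expression is $\sum_{i=0}^{q}(-1)^{i+p}s\phi(\mathbf x,(y_0,\dots,\widehat{y_i},\dots,y_q))$, and the extra $(-1)^p$ then comes from expanding $s\phi$), but your stated conclusion $\sum_i(-1)^i\phi(\mathbf x,(x_0,y_0,\dots,\widehat{y_i},\dots,y_q))$ is right and the cancellation goes through as you describe.
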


\begin{proof}
The fact that $s$ defines a splitting on the larger space of all equivariant functions from $X^{p+1}\times X^{q+1}$ to $V$ is standard homological algebra. We must verify that $s$ is continuous, from which it immediately follows that $s\phi$ has bounded $R$-norms, and that if $\phi$ is of controlled supports then so is $s\phi$.




Continuity is straightforward. For each $R\geq 0$ we have $\|s\phi\|_R\leq \|\phi\|_R$; this is immediate from the observation that if $(x_0,\dots,x_p)$ in $\Delta^{p+1}_R$ then
$$\|\phi((x_0,\dots,x_p),(x_0,y_0,\dots,y_{q-1}))\|_V\leq \|\phi\|_R.$$

It remains to verify that $s\phi$ is of controlled supports. Given $R>0$, since $\phi$ is of controlled supports we know there exists $S$ such that if $\bxy\in \Delta^{p++1,q+1}_R$ then $\supp(\phi\bxy)$ is contained in $B_{S}(x_i)$ and $B_{S}(y_j)$ for all $i,j$. If $((x_0,\dots,x_p),(y_0,\dots,y_{q-1}))\in \Delta^{p+1,q}_R$ then we have $((x_0,\dots,x_p),(x_0,y_0,\dots,y_{q-1}))\in \Delta^{p+1,q+1}_R$, hence $\supp(s\phi((x_0,\dots,x_p),(y_0,\dots,y_{q-1})))$ is also contained in $B_{S}(x_i)$ and $B_{S}(y_j)$ for all $i,j$.


This completes the proof.
\end{proof}

We remark that the corresponding statement is false for the vertical differential $D$, since for $\phi\in \E^{p,q}\XV$, the function $((x_0,\dots,x_{p-1}),(y_0,\dots,y_q)) \mapsto \phi((y_0,x_0,\dots,x_{p-1}),(y_0,\dots,y_{q})$ is only guaranteed to be bounded on sets of the form $\bigl\{((x_0,\dots,x_{p-1}),(y_0,\dots,y_q)) \mid d(u,v)\leq R\text{ for all }u,v \in \{x_0,\dots,x_{p-1},y_0\}\bigr\}$, and not on $\Delta^{p}_R\times X^{q+1}$. Hence the vertical `splitting' would not map $\E^{p,q}$ into $\E^{p-1,q}$.

\begin{corollary}
\label{acorollary}
The cohomology $\He^*\XV$ is isomorphic to the cohomology of the cochain complex $(\E^{*,-1},D)$.
\end{corollary}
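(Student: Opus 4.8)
The plan is to deduce this from Proposition \ref{acyclicrows} by the standard acyclic-augmented-complex argument, essentially the dual of the fact that a bicomplex with exact rows has totalisation cohomology computed by the column of augmentation terms. Concretely, I want to build a cochain homotopy equivalence between the totalisation $\mathrm{Tot}(\E^{p,q})_{p,q\geq 0}$ and the complex $(\E^{*,-1},D)$, where the latter is regarded as a complex concentrated in the bottom row (i.e. with $q=-1$). The inclusion $\iota:(\E^{p,-1},D)\to \mathrm{Tot}(\E)$ is the augmentation $d:\E^{p,-1}\to\E^{p,0}$ followed by the inclusion of the $(p,0)$-summand of the total complex; this is a chain map because $d$ is the augmentation and $D$ anti-commutes with $d$, so a sign check confirms $\iota$ intertwines $D$ on the augmentation complex with the total differential $\mathcal D:=D+d$. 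The content is to produce a chain map $\pi:\mathrm{Tot}(\E)\to(\E^{*,-1},D)$ in the other direction together with a homotopy $\mathcal D H + H\mathcal D = \mathrm{id}-\iota\pi$ on the total complex (and $\pi\iota=\mathrm{id}$ automatically).

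The homotopy $H$ is assembled from the row-splittings $s:\E^{p,q}\to\E^{p,q-1}$ of Proposition \ref{acyclicrows}, which are continuous, preserve controlled supports and boundedness of the $R$-seminorms, and satisfy $ds+sd=\mathrm{id}$ for $q\geq 0$ while $sd=\mathrm{id}$ on $\E^{p,-1}$. First I would check that $s$ anti-commutes (up to the usual sign) with the vertical differential $D$ on the relevant subspaces; this is a formal computation with the explicit formula $s\phi((x_0,\dots,x_p),(y_0,\dots,y_{q-1}))=(-1)^p\phi((x_0,\dots,x_p),(x_0,y_0,\dots,y_{q-1}))$ and the formula for $D$, and the sign bookkeeping is the only fiddly part. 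Having this, one defines $H$ on the summand $\E^{p,q}$ to be $s$ landing in $\E^{p,q-1}$ for $q\geq 0$, and the standard telescoping identity $\mathcal D H+H\mathcal D=\mathrm{id}-\iota\pi$ drops out row by row: in row $q\geq 1$ we get the identity from $ds+sd=\mathrm{id}$ together with cancellation of the $D$-terms, while in row $q=0$ the residual term is exactly $\iota\pi$ with $\pi:\E^{p,0}\to\E^{p,-1}$ given by (minus, up to sign) the restriction of $s$, i.e. $\pi\phi(\mathbf x)=(-1)^p\phi(\mathbf x,(x_0))$, and one checks $\pi$ is a chain map for $D$ and that $\pi\iota=\mathrm{id}$ using $sd=\mathrm{id}$ on $\E^{p,-1}$.

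I expect the main obstacle to be purely organisational rather than conceptual: getting all the signs consistent between the stated formulas for $D$, $d$ (note the $(-1)^{i+p}$ in $d$, which already encodes the anti-commutation) and $s$, so that $\iota\pi$ appears with the correct sign and $\mathcal D H+H\mathcal D$ telescopes cleanly across the augmentation. The analytic side — that $\iota$, $\pi$, $H$ all actually map into the respective subspaces $\E^{\bullet,\bullet}\XV$ of equivariant, controlled-support, finite-seminorm functions — requires no new work: $\iota$ is the inclusion of functions constant in $y$ and is visibly continuous with controlled supports, while $\pi$ and $H$ inherit these properties from $s$, which were established in Proposition \ref{acyclicrows}. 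So the write-up reduces to: (1) state $\iota$ and $\pi$, (2) verify $\pi\iota=\mathrm{id}$, (3) verify $\iota,\pi$ are chain maps, (4) exhibit $H$ from $s$ and verify the homotopy identity, concluding that $\iota$ and $\pi$ induce mutually inverse isomorphisms $\He^*\XV\cong H^*(\E^{*,-1},D)$.
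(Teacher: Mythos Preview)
Your plan to build an explicit chain homotopy equivalence is a legitimate alternative to the paper's argument, but there is a real gap at the point where you assert that $s$ anti-commutes with $D$ ``up to the usual sign.'' This is not merely sign bookkeeping: the commutator genuinely fails to vanish. For $\phi\in\E^{p,q}$, all terms of $Ds\phi+sD\phi$ cancel \emph{except} the one coming from the $i=0$ face of $D$, because $s$ inserts the point $x_0$ into the first $\mathbf y$-slot, whereas the $i=0$ face of $D$ deletes $x_0$, so that applying $s$ afterwards inserts $x_1$ instead. One finds
\[
(Ds+sD)\phi\bigl((x_0,\dots,x_{p+1}),\mathbf y'\bigr)=(-1)^p\Bigl[\phi\bigl((x_1,\dots,x_{p+1}),(x_1,\mathbf y')\bigr)-\phi\bigl((x_1,\dots,x_{p+1}),(x_0,\mathbf y')\bigr)\Bigr],
\]
which is nonzero in general. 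Consequently, with $H=s$ one gets $(D+d)H+H(D+d)=\mathrm{id}+(Ds+sD)$ on rows $q\geq 1$, not $\mathrm{id}-\iota\pi$; the ``cancellation of the $D$-terms'' you anticipate simply does not occur.

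The strategy is salvageable via homological perturbation: the error term $Ds+sD$ shifts bidegree by $(+1,-1)$, so on each fixed total degree only finitely many iterates survive, and a corrected homotopy of the form $H'=\sum_{k\geq 0}(-1)^k s(Ds)^k$ (together with suitably corrected $\iota',\pi'$) does the job. But that is substantially more than you indicate, and the sign and support checks multiply accordingly. The paper sidesteps the issue entirely with the standard staircase argument: using only the exactness of the rows (never any compatibility of $s$ with $D$), one shows that every total cocycle is cohomologous to one concentrated in the row $q=0$, and then identifies $\ker(d:\E^{*,0}\to\E^{*,1})$ with $(\E^{*,-1},\pm D)$ via the augmentation. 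That route is shorter precisely because it never needs $Ds+sD=0$.
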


\begin{proof}
This follows from the exactness of the augmented rows of the bicomplex. Each cocycle in $\E^{p,q}\XV$ is cohomologous to a cocycle in $\E^{p+q,0}\XV$, whence $\He^*\XV$ is isomorphic to the cohomology of the complex $\ker(d:\E^{p,0}\to \E^{p,1})$ with the differential $D$. The augmentation map $d:\E^{p,-1}\to \E^{p,0}$ yields an isomorphism from $(\E^{p,-1},D)$ to the kernel $\ker(d:\E^{p,0}\to \E^{p,1})$, and as $D,d$ anti-commute, the differential $D$ on the kernels is identified with the differential $-D$ on $\E^{p,-1}$. We note however that the change of sign does not affect the cohomology, so $\He^*\XV$ is isomorphic to the cohomology of $(\E^{*,-1},D)$ as claimed.
\end{proof}

The cohomology $\He\XV$ is not sufficiently subtle to detect property A. In the following section we will introduce two completion procedures which we will apply to the bicomplex $\E$, obtaining more refined cohomologies.

\end{section}

\section{Generalised completions}

\def\something{pre-Fr\'echet }

Let $\E$ be a vector space equipped with a countable family of seminorms $\|\cdot\|_i$ which separates points. We will call such a space a \something space. We have in mind that $\E=\Epq\XV$, for some $p,q,X,G$ and $\mathcal V$.

If $\E$ is not complete then one constructs the classical completion of $\E$ as follows. Let $E_{\mathrm{cs}}$ denote the space of Cauchy sequences in $\E$ (i.e.\ sequences which are Cauchy with respect to each semi-norm), and let $E_0$ denote the space of sequences in $\E$ which converge to 0. Then the completion of $\E$ is precisely the quotient space $E_{\mathrm{cs}}/E_0$. As the topology of $\E$ is given by a countable family of seminorms, this completion is a Fr\'echet space.

In this section we will define two generalised completions which are somewhat larger than the classical one, and we will demonstrate various properties of the completions, and relations between the two.

The first completion is motivated by the classical case.

\begin{defn}
The \emph{quotient completion} of $\E$, denoted $\EQ$ is the quotient space $E/E_0$ where $E$ denotes the space of bounded sequences in $\E$ and $E_0$ denotes the space of sequences in $\E$ which converge to 0.
\end{defn}

The second completion comes from functional analysis.

\begin{defn}
The \emph{weak-* completion} of $\E$, denoted $\EW$ is the double dual of $\E$.
\end{defn}

The space $\E$ is not assumed to be complete, however this does not matter as the dual of $\E$ is the same as the dual of it's completion $\overline{\E}=E_{\mathrm{cs}}/E_0$. Indeed we note that both $\EQ$ and $\EW$ contain $\overline{\E}$, and indeed $\EQ,\EW$ are respectively isomorphic to the quotient and weak-* completions of $\overline{\E}$.

Since the space $\E$ need not be a normed space, we recall some basic theory of duals of Fr\'echet spaces. For simplicity we assume that the seminorms on $\E$ are monotonic, i.e.\ $\|\cdot \|_i\leq \|\cdot \|_j$ for $i<j$, this being easy to arrange.

For $\alpha\in\E^*$, we can define $\|\alpha\|^i=\sup\{|\la\alpha,\phi\ra| \mid \|\phi\|_i\leq 1\}$. We note that $\|\alpha\|^i$ takes values in $[0,\infty]$, and $\|\cdot \|^i\geq \|\cdot \|^j$ for $i<j$. The condition that $\alpha$ is continuous is the condition that $\|\alpha\|_i$ is finite for some $i$. For any sequence $r_1,r_2,\dots$ the set $\{\alpha \in \E^* \mid \|\alpha\|^i<r_i \text{ for some } i\}$ is a neighbourhood of $0$, and every neighbourhood of 0 contains such a set. Hence these sets determine the topology on $\E^*$.


Having equipped $\E^*$ with this topology, we can then form the space $\E^{**}$ of continuous linear functionals on $\E^*$. A linear functional $\eta$ on $\E^*$ is continuous if for all $i$, we have $\|\eta\|_i=\sup\{|\la\eta,\alpha\ra| \mid \|\alpha\|^i\leq 1\}<\infty$.

The space $\EW=\E^{**}$ will be equipped with the weak-* topology. It follows by the Banach-Alaoglu theorem that all bounded subsets of $\EW$ are relatively compact. In the language of Bourbaki, if $A\subseteq \EW$ is bounded, i.e.\ there exists a sequence $r_i$ such that $\|\eta\|_i\leq r_i$ for all $i$, then $A$ is contained in the polar of $\{\alpha\in \E^* \mid \exists i,\,\|\alpha\|^i\leq 1/r_i\}$, which is compact.

\begin{remark}
From an abstract perspective, the weak-* completion is a natural way to enlarge $\E$. On the other hand, from the point of view of explicitly constructing elements of the space, the quotient completion is more tractable.
\end{remark}

\begin{defn}
We say that a short exact sequence $0\to\E\xrightarrow i\E'\xrightarrow \pi\E''\to 0$ of locally convex topological vector spaces is \emph{topologically exact} if the maps $i,\pi$ are open.
\end{defn}
Note that if the spaces are complete then the requirement that $i,\pi$ are open is automatic by the open mapping theorem.

\begin{prop}
\label{exact-completion}
Let $\E,\E'$ be \something spaces. Then a continuous map $T:\E\to\E'$ extends to give maps $T_Q:\EQ\to \EQ'$ and $T_W:\EW\to \EW'$. Moreover this process respects compositions, and takes short topologically exact sequences to short exact sequences.
\end{prop}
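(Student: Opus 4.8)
The plan is to verify each assertion of the proposition in turn, handling the quotient completion and the weak-* completion separately, and then combining them for the exactness statement. First I would treat the quotient completion. Given a continuous map $T:\E\to\E'$, continuity with respect to the families of seminorms means that for each seminorm $\|\cdot\|'_j$ on $\E'$ there is a seminorm $\|\cdot\|_i$ on $\E$ and a constant $C$ with $\|T\phi\|'_j\leq C\|\phi\|_i$. Hence $T$ carries bounded sequences to bounded sequences and null sequences to null sequences, so it descends to a well-defined map $T_Q:\EQ=E/E_0\to E'/E'_0=\EQ'$ by $T_Q[(\phi_n)]=[(T\phi_n)]$. That $(ST)_Q=S_QT_Q$ and $(\mathrm{id})_Q=\mathrm{id}$ is immediate from the formula.

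Next I would treat the weak-* completion. A continuous linear map $T:\E\to\E'$ induces the transpose $T^*:\E'^*\to\E^*$, $\alpha\mapsto\alpha\circ T$, which is weak-* continuous (equivalently, continuous for the topologies on the duals described in the preceding paragraphs of the section: the estimate $\|T\phi\|'_j\leq C\|\phi\|_i$ gives $\|T^*\alpha\|^i\leq C\|\alpha\|'^j$, so $T^*$ is continuous). Taking transposes once more yields $T_W:=T^{**}:\E^{**}\to\E'^{**}$, i.e.\ $\EW\to\EW'$, and $T_W$ is weak-* continuous; again functoriality $(ST)_W=S_WT_W$ and $(\mathrm{id})_W=\mathrm{id}$ is formal. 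One should also note that $T_Q$ and $T_W$ restrict to the given map on the canonical copy of (the completion of) $\E$ inside each completion, so they genuinely extend $T$.

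For the final claim, suppose $0\to\E\xrightarrow{i}\E'\xrightarrow{\pi}\E''\to 0$ is topologically exact, so $i$ is a topological embedding onto its (closed) image and $\pi$ is an open surjection with kernel $i(\E)$. For the quotient completion I would argue directly: $\pi_Q$ is surjective because any bounded sequence $(\psi_n)$ in $\E''$ can be lifted to a bounded sequence in $\E'$ using the openness of $\pi$ (bounded sets in $\E''$ are images of bounded sets in $\E'$, by openness); $i_Q$ is injective because if $(\phi_n)$ is a bounded sequence in $\E$ with $i(\phi_n)\to 0$ in $\E'$, then since $i$ is an embedding $\phi_n\to 0$ in $\E$, so $[(\phi_n)]=0$; and exactness in the middle, $\ker\pi_Q=\operatorname{im} i_Q$, follows because $\pi(\phi_n)\to 0$ in $\E''$ together with openness of $\pi$ lets one correct $(\phi_n)$ by a null sequence to land in $i(\E)$. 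For the weak-* completion I would use duality: a topologically exact sequence of (pre-)Fr\'echet spaces dualises to an exact sequence $0\to\E''^*\to\E'^*\to\E^*\to 0$ of duals — surjectivity of $\E'^*\to\E^*$ is the Hahn--Banach extension theorem (using that $i$ is an embedding so continuous functionals on $\E$ extend), injectivity of $\E''^*\to\E'^*$ and exactness in the middle come from $\pi$ being an open surjection with kernel $i(\E)$ — and then dualising again preserves exactness at $\E'^{**}$ and gives surjectivity $\E'^{**}\to\E''^{**}$ and injectivity $\E''^{**}\hookrightarrow\E'^{**}$; here one invokes that a surjection of duals of such spaces dualises to an injection with the expected image, which rests again on Hahn--Banach and on bounded sets being relatively weak-* compact as recorded just above in the section.

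The main obstacle I anticipate is the homological-algebra bookkeeping in the weak-* case: one must be careful that the naive double-dualisation really does turn the short exact sequence $0\to\E''^*\to\E'^*\to\E^*\to 0$ back into a short exact sequence of second duals, since the functor $(-)^{**}$ is only left exact in general. The key input that rescues exactness on the right is that $i^*:\E'^*\to\E^*$ is \emph{surjective} (Hahn--Banach) with a closed, weak-* closed image, so that dualising it yields an injection, and dually that $\pi^*$ is a topological embedding; establishing these openness/embedding properties at the level of the duals — which is exactly where topological exactness of the original sequence is used — is the technical heart of the argument. Everything else is routine, once one has been careful to phrase continuity of $T$ in terms of the comparison estimates $\|T\phi\|'_j\leq C\|\phi\|_i$ between the two seminorm families.
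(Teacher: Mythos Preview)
Your proposal is correct and follows essentially the same route as the paper: the quotient-completion argument is identical (boundedness and null sequences are preserved; injectivity, middle exactness, and surjectivity are checked using openness of $i$ and $\pi$), and for the weak-* completion both you and the paper define $T_W=T^{**}$ and reduce exactness to standard duality. The only difference is cosmetic: where the paper passes through the classical Fr\'echet completions and then invokes ``standard functional analysis'' for exactness of the double duals, you unpack that appeal explicitly via Hahn--Banach and the openness/embedding properties of $i^*,\pi^*$.
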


\begin{proof}
For the quotient completion, continuity of the map $T:\E\to \E'$ guarantees that applying $T$ to each term of a bounded sequence $\phi_n$ in $\E$ we obtain a bounded sequence $T\phi_n$ in $\E'$. If $\phi_n\to 0$ then $T\phi_n\to 0$ by continuity, hence we obtain a map $T_Q:\EQ\to \EQ'$. It is clear that this respects compositions.

Now suppose $0\to\E\xrightarrow i\E'\xrightarrow \pi\E''\to 0$ is a short exact sequence.
If $i_Q$ vanishes on a coset $[(\phi_n)]\in \EQ$ then $i\phi_n\to 0$. Since $i$ is open and injective, $i\phi_n\to 0$ implies $\phi_n\to 0$. Hence $[(\phi_n)]=0$ and we have shown that $i_Q$ is injective.

If $[(\phi'_n)]\in \EQ'$ with $\pi[(\phi'_n)]=0$ then $\pi\phi_n'\to 0$. The map $\pi$ induces a map $\E'/i\E\to \E''$ which is an isomorphism of \something spaces, hence the class of $\phi_n'$ tends to 0 in the quotient $\E'/i\E$. That is, there exists a sequence $\psi'_n$ in the $i\E$ such that $\phi_n'-\psi_n'\to 0$. We have $\psi'_n=i\psi_n$ for some $\psi_n\in \E$, and $[(\phi'_n)]=[(\psi'_n)]=[(i\psi_n)]$ in $\EQ'$, hence we deduce that $[(\phi'_n)]$ is in the image of $i_Q$.

Finally, for surjectivity of $\pi_Q$ we note that if $[(\phi''_n)]\in \EQ''$ then there exists $\phi'_n$ such that $\phi''_n=\pi\phi'_n$. By openness of $\pi$, the sequence $\phi'_n$ can be chosen to be bounded.

In the case of the weak-* completion, the maps $T_W,i_W,\pi_W$ are simply the double duals of $T,i,\pi$. The fact that this respects composition is then standard. The hypothesis that $i,\pi$ are open ensures that the corresponding sequence of classical completions is exact, whence exactness of the double duals is standard functional analysis.
\end{proof}

We now give a connection between the two completions.

\begin{prop}
\label{e_omega}
Let $\omega$ be a non-principal ultrafilter on $\naturals$. Then for any \something space $\E$ there is a linear map $e_\omega:\EQ\to \EW$, defined by $\la e_\omega(\phi),\alpha \ra=\lim\limits_{\omega}\la\alpha,\phi_n\ra$. Moreover for $T:\E\to \E'$ we have $e_\omega \circ T_Q=T_W\circ e_\omega$. If $I_Q,I_W$ denote the inclusions of $\E$ in $\EQ,\EW$ then $I_W=e_\omega \circ I_Q$ for all $\omega$.
\end{prop}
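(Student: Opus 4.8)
The plan is to build $e_\omega$ on the level of bounded sequences, check that the defining formula produces an element of $\EW=\E^{**}$, check that it annihilates null sequences and hence descends to $\EQ$, and finally obtain the naturality identity and the relation $I_W=e_\omega\circ I_Q$ by direct computation. Start with a bounded sequence $\phi=(\phi_n)$ in $\E$: by definition of boundedness there is a sequence $(r_i)$ with $\|\phi_n\|_i\le r_i$ for all $n$ and all $i$. For $\alpha\in\E^*$, continuity of $\alpha$ supplies an index $i$ with $\|\alpha\|^i<\infty$, so $|\la\alpha,\phi_n\ra|\le\|\alpha\|^i\|\phi_n\|_i\le\|\alpha\|^i r_i$ for every $n$; hence $(\la\alpha,\phi_n\ra)_n$ is a bounded scalar sequence and the ultralimit $\lim_\omega\la\alpha,\phi_n\ra$ exists. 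Linearity of $\alpha\mapsto\lim_\omega\la\alpha,\phi_n\ra$ is immediate from linearity of ultralimits. To see this functional is \emph{continuous}, i.e.\ lies in $\E^{**}$, fix $i$: for $\|\alpha\|^i\le1$ we get $|\la e_\omega(\phi),\alpha\ra|\le\sup_n|\la\alpha,\phi_n\ra|\le\sup_n\|\phi_n\|_i\le r_i$, so $\|e_\omega(\phi)\|_i\le r_i<\infty$, which is exactly the membership condition for $\EW=\E^{**}$ recalled before the statement.

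Next, $e_\omega$ passes to the quotient $\EQ=E/E_0$: if $\phi_n\to0$ in $\E$, then for each $\alpha\in\E^*$ the scalars $\la\alpha,\phi_n\ra$ tend to $0$ in the ordinary sense, and the $\omega$-limit of an ordinarily convergent sequence equals its ordinary limit, so $e_\omega(\phi)=0$. Thus $e_\omega$ annihilates $E_0$ and induces a linear map $\EQ\to\EW$, linearity being formal.

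For naturality, recall from Proposition \ref{exact-completion} that $T_Q[(\phi_n)]=[(T\phi_n)]$ and that $T_W$ is the double dual $T^{**}$. Pairing with an arbitrary $\alpha'\in(\E')^*$,
\[ \la e_\omega(T_Q[(\phi_n)]),\alpha'\ra=\lim_\omega\la\alpha',T\phi_n\ra=\lim_\omega\la T^*\alpha',\phi_n\ra=\la e_\omega[(\phi_n)],T^*\alpha'\ra=\la T_W e_\omega[(\phi_n)],\alpha'\ra, \]
so $e_\omega\circ T_Q=T_W\circ e_\omega$. Finally, $I_Q$ sends $\phi$ to the class of the constant sequence $(\phi,\phi,\dots)$ and $I_W$ is the canonical evaluation map $\E\to\E^{**}$; for any $\alpha\in\E^*$ we have $\la e_\omega(I_Q\phi),\alpha\ra=\lim_\omega\la\alpha,\phi\ra=\la\alpha,\phi\ra=\la I_W\phi,\alpha\ra$, which is independent of $\omega$, giving $I_W=e_\omega\circ I_Q$.

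I do not anticipate a genuine obstacle. The one place to be careful is the continuity estimate in the first step: it relies on a single sequence $(r_i)$ bounding $\|\phi_n\|_i$ uniformly in $n$ for every $i$ at once, which is exactly what it means for $(\phi_n)$ to be a bounded sequence in the pre-Fr\'echet space $\E$; a bound depending on the particular $\alpha$ would not suffice to place $e_\omega(\phi)$ in $\E^{**}$.
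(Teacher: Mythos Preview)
Your argument is correct and complete. It differs from the paper's in its opening move: the paper views the bounded sequence as a map $\naturals\to\E\subseteq\EW$ with relatively weak-* compact range (by Banach--Alaoglu), invokes the universal property of the Stone--\v Cech compactification to obtain a continuous extension $\overline\phi:\beta\naturals\to\EW$, and sets $e_\omega(\phi)=\overline\phi(\omega)$; the formula $\la e_\omega(\phi),\alpha\ra=\lim_\omega\la\alpha,\phi_n\ra$ is then read off from continuity. You instead take the formula as the definition and verify directly, via the uniform bound $\|\phi_n\|_i\le r_i$, that the resulting functional on $\E^*$ satisfies $\|e_\omega(\phi)\|_i\le r_i$ and hence lies in $\E^{**}$. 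Your route is more elementary and self-contained, and it has the minor virtue of making explicit that $e_\omega$ annihilates $E_0$ and so descends to $\EQ$, a point the paper leaves implicit. The paper's route is more conceptual and explains \emph{why} such a map should exist for any $\omega\in\beta\naturals$. The naturality computation and the check that $I_W=e_\omega\circ I_Q$ are essentially identical in both.
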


\begin{proof}
We view an element $\phi$ of $E$ as a map $\phi:\naturals \to \E\subseteq \EW$ with bounded range. Hence the closure of the range is compact in the weak-* topology on $\EW$ by the Banach-Alaoglu theorem. By the universal property of the Stone-\v Cech compactification it follows that $\phi$ extends to a map $\overline{\phi}:\beta\naturals \to \EW$, which is continuous with respect to the weak-* topology on $\EW$. We define $e_\omega(\phi)=\overline{\phi}(\omega)$.

Continuity of $\overline{\phi}$ guarantees that for each $\alpha\in \E^*$, we have $\la \overline{\phi}(\cdot),\alpha\ra$ a continuous function on $\beta\naturals$. This is the extension to $\beta\naturals$ of the bounded function $n\mapsto \la\alpha,\phi_n\ra$, hence evaluating at $\omega$ we have
$$\la e_\omega(\phi),\alpha \ra=\la \overline{\phi}(\omega),\alpha\ra=\lim\limits_{\omega}\la\alpha,\phi_n\ra.$$

The fact that $e_\omega \circ T_Q=T_W\circ e_\omega$ is now easily verified as
$$\la e_\omega(T_Q\phi),\alpha\ra=\lim\limits_{\omega}\la\alpha,T\phi_n\ra=\lim\limits_{\omega}\la T^*\alpha,\phi_n\ra=\la e_\omega(\phi),T^*\alpha\ra=\la T_We_\omega(\phi),\alpha\ra$$
for all $\alpha \in \E^*$. The final assertion is simply the observation that the extension to $\beta\naturals$ of a constant sequence is again constant.
\end{proof}

\medskip

We are now in a position to define our cohomology theories.

For $p\geq 0,q\geq -1$, let $\EQ^{p,q}\XV$ denote the quotient completion of $\E^{p,q}\XV$, and let $\EW^{p,q}\XV$ denote the weak-* completion of $\E^{p,q}\XV$. As $(D,d)$ are continuous anti-commuting differentials, the extensions of these to the completions (which we will also denote by $D,d$) are again anti-commuting differentials, hence taking $p,q\geq 0$ we have bicomplexes $(\EQ^{p,q}\XV,(D,d))$ and $(\EW^{p,q}\XV,(D,d))$.

Let $\HQ^*\XV$ denote the cohomology of the totalisation of the bicomplex $\EQ^{p,q}\XV, p,q\geq 0$, and let $\HW^*\XV$ denote the cohomology of the totalisation of the bicomplex $\EW^{p,q\XV}, p,q\geq 0$.

Since the splitting $s$ is continuous it also extends to the completions and we deduce that the augmented rows of the completed bicomplexes are exact. This gives rise to the following.

\begin{corollary}
\label{anothercorollary}
The cohomologies $\HQ^*\XV,\HW^*\XV$ are isomorphic respectively to the cohomologies of the cochain complexes $(\EQ^{*,-1},D)$,$(\EW^{*,-1},D)$.
\end{corollary}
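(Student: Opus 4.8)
The plan is to imitate the proof of Corollary~\ref{acorollary} line by line, the one new point being that the contracting homotopy of Proposition~\ref{acyclicrows} survives the passage to the completions. Recall that Corollary~\ref{acorollary} rested on two facts: the augmented rows $(\E^{p,*},d)$ are exact (via the explicit continuous splitting $s$), and the augmentation $d:\E^{p,-1}\to\E^{p,0}$ identifies the complex $(\E^{*,-1},\pm D)$ with the kernel complex $\ker(d:\E^{p,0}\to\E^{p,1})$, whose $D$-cohomology computes $\He^*\XV$. So the two steps are: transport the row exactness to the completed bicomplexes, and then rerun the totalisation argument.

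For the first step, Proposition~\ref{acyclicrows} tells us that $s:\E^{p,q}\XV\to\E^{p,q-1}\XV$ is continuous, so Proposition~\ref{exact-completion} produces extensions $s_Q:\EQ^{p,q}\XV\to\EQ^{p,q-1}\XV$ and $s_W:\EW^{p,q}\XV\to\EW^{p,q-1}\XV$. Both completion constructions are additive and respect compositions of continuous linear maps --- the quotient completion via $T_Q[(\phi_n)]=[(T\phi_n)]$, and the weak-$*$ completion because double dualisation is an additive functor respecting composition. Hence the homotopy relations $ds+sd=\mathrm{id}$ on $\E^{p,q}\XV$ (for $p,q\geq0$) and $sd=\mathrm{id}$ on $\E^{p,-1}\XV$ extend verbatim to the completions. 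It follows that for each $p$ the augmented rows $0\to\EQ^{p,-1}\to\EQ^{p,0}\to\EQ^{p,1}\to\cdots$ and $0\to\EW^{p,-1}\to\EW^{p,0}\to\EW^{p,1}\to\cdots$ (with differential $d$) are exact, and that $d:\EQ^{p,-1}\to\EQ^{p,0}$, respectively $d:\EW^{p,-1}\to\EW^{p,0}$, is injective with image $\ker(d:\EQ^{p,0}\to\EQ^{p,1})$, respectively $\ker(d:\EW^{p,0}\to\EW^{p,1})$.

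The second step is then exactly as in Corollary~\ref{acorollary}: exactness of the rows of the first-quadrant bicomplex $\EQ^{p,q}\XV$ forces every cocycle of the totalisation to be cohomologous to one lying in $\EQ^{p+q,0}\XV$, so $\HQ^*\XV$ is the $D$-cohomology of the complex $\ker(d:\EQ^{p,0}\to\EQ^{p,1})$, which via the augmentation isomorphism is $(\EQ^{*,-1},-D)$; since changing the sign of a differential does not affect cohomology, $\HQ^*\XV\cong H^*(\EQ^{*,-1},D)$, and the identical argument with $\EW$ in place of $\EQ$ gives $\HW^*\XV\cong H^*(\EW^{*,-1},D)$. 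I do not expect any genuine obstacle: once Propositions~\ref{acyclicrows} and~\ref{exact-completion} are in hand the argument is formal homological algebra. The only point deserving a moment's attention is that the homotopy identity is preserved by the weak-$*$ completion even though that functor does not preserve exactness of arbitrary sequences --- but since the identity is built purely from sums and composites of continuous linear maps, this is automatic.
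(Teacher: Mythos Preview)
Your proof is correct and follows essentially the same route as the paper: the paper simply remarks that since the splitting $s$ is continuous it extends to both completions, whence the augmented rows of the completed bicomplexes are exact, and then says ``the argument is identical to Corollary~\ref{acorollary}.'' Your write-up spells out the functoriality/additivity reason why the homotopy identities $ds+sd=\mathrm{id}$ and $sd=\mathrm{id}$ pass to $\EQ$ and $\EW$, which is a helpful elaboration but not a different approach.
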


The argument is identical to Corollary \ref{acorollary}

We note that the extension of $s$ to the completions ensures taking the kernel of $d:\E^{p,0}\to\E^{p,1}$ and then completing (in either way), yields the same result as first completing and then taking the kernel; one obtains the completion of $\E^{p,-1}$. The corresponding statement for $D$ would be false. The kernel of $D:\EQ^{0,q}\to\EQ^{1,q}$ will typically be much larger than the completion of the kernel of $D:\E^{0,q}\to\E^{1,q}$, and similarly for $\EW$. These vertical kernels are of great interest and we will study them in Section \ref{ai-section}.

We now make a connection between the three cohomology theories $\He,\HQ,\HW$.

\begin{thm}\label{IQIW}
The inclusions of $\Epq\XV$ in $\EQ^{p,q}\XV$ and $\EW^{p,q}\XV$ and the map $e_\omega:\EQ^{p,q}\XV\to \EW^{p,q}\XV$ induce maps at the level of cohomology:
$$\begin{matrix}
\vspace{1.5ex} \He^*\XV & \xrightarrow{I_Q} & \HQ^*\XV\\
\vspace{1.5ex} & \searrow^{I_W} & \downarrow {\scriptstyle{e_\omega}}\\
& & \HW^*\XV
\end{matrix}$$
The above diagram commutes. 
Moreover the kernels $\ker I_Q$ and $\ker I_W$ are equal, that is, a cocycle in $\Epq\XV$ is a coboundary in $\EQ^{p,q}\XV$ if and only if it is a coboundary in $\EW^{p,q}\XV$.

\end{thm}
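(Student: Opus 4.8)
The plan is to verify the three small claims in turn, with the final one about equality of kernels being the substantive part. First, the existence and commutativity of the triangle of induced maps on cohomology is essentially formal: $I_Q$, $I_W$ and $e_\omega$ are continuous chain maps (they commute with $D$ and $d$, since the differentials on the completions are by construction the extensions of those on $\E$, and $e_\omega$ intertwines $D_Q,d_Q$ with $D_W,d_W$ by Proposition \ref{e_omega} applied with $T=D$ and $T=d$). Hence each induces a map on the cohomology of the totalised bicomplex. Commutativity of the triangle at the cochain level is exactly the identity $I_W=e_\omega\circ I_Q$ from Proposition \ref{e_omega}, so it descends to cohomology.

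For the kernel statement, using Corollary \ref{acorollary} and Corollary \ref{anothercorollary} I would replace the totalisations by the single complexes $(\E^{*,-1},D)$, $(\EQ^{*,-1},D)$, $(\EW^{*,-1},D)$; the maps $I_Q,I_W,e_\omega$ are compatible with these identifications since the splitting $s$ is continuous and extends to the completions. So it suffices to show: a $D$-cocycle $\phi\in\E^{p,-1}\XV$ is a $D$-coboundary in $\EQ^{p,-1}$ if and only if it is a $D$-coboundary in $\EW^{p,-1}$. One direction is immediate: if $\phi=D\psi$ with $\psi\in\EQ^{p-1,-1}$, then applying $e_\omega$ and using $e_\omega\circ D_Q=D_W\circ e_\omega$ together with $I_W=e_\omega\circ I_Q$ gives $\phi=I_W\phi=D_W(e_\omega\psi)$, a coboundary in $\EW$.

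The reverse direction is the main obstacle and needs a genuine argument. Suppose $\phi=D\eta$ for some $\eta\in\EW^{p-1,-1}\XV=(\E^{p-1,-1})^{**}$. I want to produce a \emph{bounded sequence} $\psi_n$ in $\E^{p-1,-1}$ with $D\psi_n\to\phi$ in the topology of $\E^{p,-1}$; its class in $\EQ^{p-1,-1}$ will then satisfy $D_Q[(\psi_n)]=[\phi]$, exhibiting $\phi$ as a coboundary in $\EQ$. The point is that the consistent way to phrase the obstruction to solving $D\psi=\phi$ is via the pre-dual: $\phi$ lies in the closure of the image of the pre-dual map $\delta=D^*:\E_*^{p-1,-1}\to\E_*^{p,-1}$ (the adjoint, acting on the predual $\E^*$ which is the same for $\E$ and its completion). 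Concretely, $\phi$ being a $D_W$-coboundary means $\la\phi,\alpha\ra=0$ for every $\alpha\in\ker(\delta)\subseteq(\E^{p,-1})^*$ — i.e. $\phi$ annihilates the kernel of the pre-dual coboundary — and one checks this is equivalent to $\phi$ lying in the weak closure of $\mathrm{im}(D:\E^{p-1,-1}\to\E^{p,-1})$, which for a convex set equals the norm/seminorm closure; that closure is exactly the set of limits of $D\psi_n$ with $\psi_n$ ranging over $\E^{p-1,-1}$, and after rescaling one arranges $(\psi_n)$ bounded. Thus I would prove: for a cocycle $\phi\in\E^{p,-1}$, the following are equivalent — (i) $\phi\in\mathrm{im}\,D_Q$; (ii) $\phi$ lies in the closure of $\mathrm{im}(D:\E^{p-1,-1}\to\E^{p,-1})$; (iii) $\phi$ annihilates $\{\alpha\in(\E^{p,-1})^*:\delta\alpha=0\}$; (iv) $\phi\in\mathrm{im}\,D_W$ — with (ii)$\Leftrightarrow$(iii) the Hahn–Banach separation step and (i)$\Leftrightarrow$(ii), (iii)$\Leftrightarrow$(iv) the unwinding of the definitions of the two completions. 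The only delicate point is that $\E^{p,-1}$ carries a family of seminorms rather than a single norm, so "bounded sequence" and "closure" must be taken with respect to this family; the Bourbaki-style description of bounded sets and polars recalled before Proposition \ref{exact-completion} is exactly what is needed to make the Hahn–Banach and Banach–Alaoglu steps go through in this setting.
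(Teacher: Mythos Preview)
Your overall plan is sound and close to the paper's: both arguments reduce to the column $(\E^{*,-1},D)$, both use the commutativity $I_W=e_\omega\circ I_Q$ for the easy inclusion $\ker I_Q\subseteq\ker I_W$, and both rely on the fact that a convex set has the same closure for the weak topology and the seminorm topology. The difference is in how you reach that convex set, and this is where your argument has a genuine gap.

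The problem is your implication (ii)$\Rightarrow$(i). Knowing that $\phi$ lies in the closure of $\mathrm{im}(D:\E^{p-1,-1}\to\E^{p,-1})$ gives you a sequence $\psi_n\in\E^{p-1,-1}$ with $D\psi_n\to\phi$; but to represent a class in $\EQ^{p-1,-1}$ you need $(\psi_n)$ \emph{bounded} with respect to every seminorm, and your remark ``after rescaling one arranges $(\psi_n)$ bounded'' does not achieve this. Rescaling $\psi_n$ also rescales $D\psi_n$, destroying the convergence; and replacing $\psi_n$ by something in the same $D$-fibre with controlled norm would require $D$ to be open onto its image, i.e.\ $\mathrm{im}(D)$ closed, which you have no reason to expect. (Your claimed (iii)$\Rightarrow$(iv) has the same flavour of difficulty --- $\mathrm{im}(D^{**})$ need not coincide with the annihilator of $\ker(D^*)$ --- though that direction is not actually needed for the theorem.)

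The paper avoids this by \emph{not} passing through the abstract closure characterisation. It keeps hold of the specific primitive $\psi\in\EW^{p-1,-1}$ with $D\psi=\phi$, and approximates $\psi$ itself by a net $\theta_\lambda$ in $\E^{p-1,-1}$ in the weak-* topology; one may take this net bounded (Goldstine-type density of bounded sets of $\E$ in bounded sets of $\E^{**}$). Then $D\theta_\lambda\to\phi$ weakly in $\E^{p,-1}$, and Mazur's theorem yields convex combinations $\theta_n$ with $D\theta_n\to\phi$ in the seminorm topology. Convex combinations of a bounded net are bounded, so $(\theta_n)$ defines a legitimate element of $\EQ^{p-1,-1}$ with $D[\theta_n]=I_Q\phi$. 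The moral is that the existence of the bidual primitive $\psi$ is exactly the extra datum that lets you control the size of the approximants; by discarding $\psi$ in favour of the annihilator condition (iii) you lose precisely the boundedness you need.
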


\begin{proof}
The existence of the maps at the level of cohomology follows from the fact that $D,d$ commute with the inclusion maps and $e_\omega$. The diagram commutes at the level of cochains by Proposition \ref{e_omega}. It is then immediate that $\ker I_Q\subseteq \ker I_W$. It remains to prove that if $\phi$ is a cocycle in $\E^{p,q}\XV$ with $I_W\phi$ a coboundary, then $I_Q\phi$ is also a coboundary.

By exactness of the rows, every cocycle in $\E^{p,q}\XV$ is cohomologous to an element of $\E^{p+q,0}\XV$, hence without loss of generality we may assume that $q=0$. Moreover any cocycle in $\E^{p,0}\XV$ is $d\phi$ for some $\phi$ in $\E^{p,-1}\XV$, and the images of $d\phi$ under $I_Q,I_W$ will be coboundaries if and only if $I_Q\phi,I_W\phi$ are a coboundaries in the completions of the complex $(\E^{p,-1}\XV,D)$.

Suppose that $I_W\phi$ is a coboundary, that is viewing $\phi$ as an element of the double dual $\EW^{p,-1}$, there exists $\psi$ in $\EW^{p-1,-1}$ such that $D\psi=\phi$.  We now appeal to density of $\E^{p-1,-1}$ in $\EW^{p-1,-1}$ to deduce that there is a net $\theta_\lambda$ in $\E^{p-1,-1}$ converging to $\psi$ in the weak-* topology. By continuity of $D$ we have that $D\theta_\lambda \to D\psi=\phi$. As $D\theta_\lambda$ and $\phi$ lie in $\E^{p,-1}$, we have that this convergence is in the weak topology on $\E^{p,-1}$. On any locally convex topological vector space, a convex set is closed in the locally convex topology if and only if it is closed in the associated weak topology. Hence (as the locally convex topology of $\E^{p,-1}$ is metrizable) there is a sequence $\theta_n$ of convex combinations of the net $\theta_\lambda$ such that $D\theta_n$ converges to $\phi$ in the $R$-semi-norm topology on $\E^{p,-1}$. Thus $D[\theta_n]=I_Q\phi$ in $\EQ^{p,-1}$.

Hence $I_Q\phi$ is also a coboundary, as required.

\end{proof}

\section{Morphisms, change of coefficients and the long exact sequence in cohomology}
 Now we consider the effect on cohomology of varying the coefficient module. Let $X$ be a metric space, $G$ be a group acting by isometries on $X$ and let $\mathcal U=(U, |\cdot|_U, \supp_U)$ and $\mathcal V=(V, |\cdot|_V, \supp_V)$ be $G$-equivariant $X$-modules.

\begin{defn}\label{morphisms}
A $G$-equivariant \emph{$X$-morphism} from $\mathcal U$ to $\mathcal V$ is an equivariant bounded linear map $\Psi:U\rightarrow V$ for which there exists $S\geq 0$ such that for all $u\in U$, $\supp_V(\Psi(u))\subseteq B_S(\supp_U(u))$. When the group action is clear from the context, in particular when $G$ is trivial, we will simply refer to this as an $X$-morphism.

An $X$-morphism $\Psi$  is said to be a \emph{monomorphism} if it is injective and if there exists $T\geq 0$ such that for all $u\in U$, $\supp_U(u)\subseteq B_T(\supp_V(\Psi(u)))$.

An $X$-morphism $\Psi$  is said to be an \emph{epimorphism} if it is surjective and there exists $M\geq 0$ such that for all $R\geq 0$ there exists $S\geq 0$ such that for all $v\in V$ if $\supp_V(v)\subseteq B_R(x)$ then there exists $u\in \Psi^{-1}(v)$ such that $\|u\|_U\leq M\|v\|_V$ and $\supp_U(u)\subseteq B_S(x)$.

An $X$-morphism $\Psi$  is said to be an \emph{isomorphism} if it is both an epimorphism and a monomorphism.  

 \end{defn}

 Note that a surjective monomorphism is automatically an epimorphism and therefore an isomorphism.

 \note{Probably there is an injective epi that is not an isomorphism}
 
In this section we will use the usual convention that the term morphism refers to an $X$-morphism when both the space $X$ and the group $G$ are clear from the context.

We note that the concept of a monomorphism is constructed to ensure that $U$ may be viewed in some sense as a sub-module of $V$, while the concept of an epimorphism is designed to provide controlled splittings, providing, respectively, some notion of injectivity and surjectivity for supports.

Given a space $X$, and a group $G$ acting by isometries on $X$,   a short exact sequence of $X$-modules is a short exact sequence of Banach spaces 

\[
0\rightarrow U\xrightarrow{\iota} V\xrightarrow{\pi} W\rightarrow 0
\]

\noindent each with the structure of a $G$-equivariant $X$-module and where $\iota$ is a monomorphism of $X$-modules and $\pi$ is an epimorphism.

\begin{lemma} An $X$-morphism $\Psi:\mathcal U\rightarrow \mathcal V$ induces a continuous linear map $\Psi_*:\E^{p,q}(X, \mathcal U)\rightarrow \E^{p,q}(X, \mathcal V)$ commuting with both differentials. This extends to give maps on both completed bicomplexes.

A short exact sequence of $X$-modules induces a short exact sequence of bicomplexes for $\E,\EQ$ and $\EW$. Hence, by the snake lemma, we obtain long exact sequences in cohomology for $\He^{*}(X, -)$,$\HQ^{*}(X, -)$ and $\HW^{*}(X, -)$.
\end{lemma}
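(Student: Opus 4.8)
The plan is to verify the three assertions in turn, and in each case to reduce everything to results already established in the excerpt. First I would treat the functoriality statement: given an $X$-morphism $\Psi:\mathcal U\to \mathcal V$, define $\Psi_*\phi=\Psi\circ\phi$ on each $\E^{p,q}(X,\mathcal U)$. The routine checks are that $\Psi_*\phi$ is equivariant (immediate from equivariance of $\Psi$ and $\phi$), that its $R$-seminorms are controlled ($\|\Psi_*\phi\|_R\leq \|\Psi\|\,\|\phi\|_R$ since $\Psi$ is bounded, which also gives continuity), and that $\Psi_*\phi$ is of controlled supports: if $\supp_U(\phi\bxy)\subseteq B_S(x_i)\cap B_S(y_j)$ and $\supp_V(\Psi(u))\subseteq B_{S'}(\supp_U(u))$ for all $u$, then $\supp_V(\Psi_*\phi\bxy)\subseteq B_{S+S'}(x_i)\cap B_{S+S'}(y_j)$. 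Commutation with $D$ and $d$ is formal since $D,d$ are built from the simplicial face maps and $\Psi$ is linear. By Proposition \ref{exact-completion}, a continuous map of \something spaces extends to both completions, functorially in $\Psi$; applying this in each bidegree gives $(\Psi_*)_Q$ and $(\Psi_*)_W$ commuting with the (extended) differentials.

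Next I would handle the short exact sequence statement. Start from $0\to U\xrightarrow{\iota}V\xrightarrow{\pi}W\to 0$, a short exact sequence of $X$-modules (so $\iota$ is an $X$-monomorphism and $\pi$ an $X$-epimorphism). I claim the induced sequence $0\to \E^{p,q}(X,\mathcal U)\xrightarrow{\iota_*}\E^{p,q}(X,\mathcal V)\xrightarrow{\pi_*}\E^{p,q}(X,\mathcal W)\to 0$ is topologically exact. Injectivity of $\iota_*$ is clear. Exactness in the middle: if $\pi_*\phi=0$ then $\phi\bxy\in\ker\pi=\iota(U)$ for all $\bxy$, so $\phi=\iota_*\psi$ for a unique set-map $\psi$; the monomorphism condition $\supp_U(u)\subseteq B_T(\supp_V(\iota u))$ shows $\psi$ is of controlled supports, and $\iota$ bounded below (being an injective bounded map with closed image, or directly from the Banach space exact sequence) gives finiteness of the $R$-seminorms of $\psi$. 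Surjectivity of $\pi_*$ is the one place the full strength of "epimorphism" is used: given $\phi\in\E^{p,q}(X,\mathcal W)$, for each $\bxy$ pick $\psi\bxy\in\pi^{-1}(\phi\bxy)$ with $\|\psi\bxy\|_V\leq M\|\phi\bxy\|_W$ and $\supp_V(\psi\bxy)\subseteq B_S(x_i)$ whenever $\supp_W(\phi\bxy)\subseteq B_R(x_i)$ — here $S$ depends only on $R$, so the controlled-support and bounded-seminorm conditions for $\psi$ follow from those for $\phi$. Equivariance of $\psi$ can be arranged by averaging... no: instead one must choose $\psi$ equivariantly, which is done exactly as in the standard bar-resolution argument — choose $\psi$ on a set of orbit representatives in $X^{p+1}\times X^{q+1}$ and extend by equivariance, using that $\pi$ and the norm/support bounds are $G$-invariant. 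The maps $\iota_*,\pi_*$ are open: $\iota_*$ because $\iota$ is bounded below, $\pi_*$ because the lift just constructed is bounded, uniformly in $R$.

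Having topological exactness of the sequence of \something spaces in each bidegree, Proposition \ref{exact-completion} immediately gives short exact sequences of bicomplexes $0\to\EQ^{p,q}(X,\mathcal U)\to\EQ^{p,q}(X,\mathcal V)\to\EQ^{p,q}(X,\mathcal W)\to 0$ and likewise for $\EW$; and of course the uncompleted sequence itself is a short exact sequence of bicomplexes. Passing to totalisations, a short exact sequence of bicomplexes gives a short exact sequence of the total cochain complexes (degreewise, totalisation is a finite direct sum in each total degree since $p,q\geq 0$), and the snake lemma / zig-zag lemma then yields the long exact sequences in $\He^*(X,-)$, $\HQ^*(X,-)$ and $\HW^*(X,-)$. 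The main obstacle is the surjectivity-of-$\pi_*$ step: one must simultaneously produce a lift that is equivariant, has uniformly controlled supports, and has seminorms bounded uniformly enough to be open — and it is precisely to make this work that the notion of $X$-epimorphism in Definition \ref{morphisms} was set up with the uniformity "$M$ independent of $R$, $S$ depending only on $R$". Everything else is a routine transcription of the standard homological-algebra arguments into the \something-space setting, leaning on Proposition \ref{exact-completion} to pass to the completions.
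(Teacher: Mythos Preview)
Your proposal is correct and follows essentially the same route as the paper: define $\Psi_*\phi=\Psi\circ\phi$, check equivariance, the seminorm bound $\|\Psi_*\phi\|_R\leq\|\Psi\|\,\|\phi\|_R$, and controlled supports via the morphism constant; then for a short exact sequence verify topological exactness of $0\to\E^{p,q}(X,\mathcal U)\to\E^{p,q}(X,\mathcal V)\to\E^{p,q}(X,\mathcal W)\to 0$ using the monomorphism condition for the middle and the epimorphism condition for surjectivity and openness of $\pi_*$, and finally invoke Proposition~\ref{exact-completion} and the snake lemma. Your treatment is in one respect more careful than the paper's: you explicitly arrange $G$-equivariance of the lift $\psi$ by choosing it on orbit representatives, a point the paper glosses over.
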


\begin{proof}
Given an element $\phi\in \E^{p,q}(X,\mathcal U)$,  we need to check that $\Psi_*(\phi)=\Psi\circ\phi$ lies in $\E^{p,q}(X,\mathcal V)$. Equivariance of $\Psi_*(\phi)$ follows from equivariance of $\Psi$ and $\phi$.

Note that for any $R\geq 0$, the $R$-norm $\|\Psi\circ\phi\|_R$ in $V$ is at most $\|\Psi\|$ times the $R$-norm of $\phi$ in $U$, hence the map is continuous. 

As $\phi$ has controlled supports in $\mathcal U$, for any $R>0$ there exists $S>0$ such that if $\bxy\in \Delta_R$ then $\supp_U(\phi(\mathbf x, \mathbf y))\subseteq B_S(x_i),B_S(y_j)$ for all $i,j$. It follows that $\supp_V(\Psi\circ\phi(\mathbf x, \mathbf y))\subseteq B_{S+S'}(x_i)$ for all $i$, where $S'$ is the constant given in the definition of a morphism. Hence $\psi\circ\phi$ is an element of $\E^{p,q}(X,\mathcal V)$. Combining this with continuity we deduce that $\Psi\circ\phi$ lies in $\E^{p,q}(X,\mathcal V)$.

The fact that $\Psi$ commutes with the differentials is immediate from linearity of $\Psi$ and the definitions of $D,d$. As the maps $\Psi_*$ is continuous, it induces maps on both completions.

Now suppose we are given a short exact sequence of $X$-modules
\[
0\rightarrow U\xrightarrow{\iota} V\xrightarrow{\pi} W\rightarrow 0.
\]

We will show that the sequence
\[
0\rightarrow E^{p,q}(X,\mathcal U)\xrightarrow{\iota_*} E^{p,q}(X,\mathcal V)\xrightarrow{\pi_*} E^{p,q}(X,\mathcal W)\rightarrow 0
\]
is topologically exact, i.e.\ it is exact and the maps are open.

Injectivity and openness of $\iota_*$ follows directly from the corresponding properties of $\iota$; as $\iota$ has closed range, it is open by the open mapping theorem.

Exactness at the middle term follows from the observation that if $\pi_*(\phi)=0$ then $\phi=\iota\circ \phi'$ for some function $\phi':X^{p+1}\times X^{q+1}\rightarrow U$, where $\phi'$ is uniquely defined by injectivity of $\iota$. We need to verify that $\phi'$ is an element of $\E^{p,q}(X,\mathcal U)$. Openness of $\iota$ yields the required norm estimates, whereas the support condition is satisfied because $\iota$ is a monomorphism, hence $\supp_U(\phi')\subseteq B_T(\supp_V(\iota\circ\phi'))= B_T(\supp_V(\phi))$ for some $T\geq 0$.

Surjectivity of $\pi_*$ follows from the definition of an epimorphism: Given $\phi: X^{p+1}\times X^{q+1}\rightarrow W$ which is an element of $\E^{p,q}(X, \mathcal W)$, for each $R>0$ there exists $S>0$ such that $\bxy\in\Delta_R\leq R$ implies that $\supp_W(\phi(\mathbf x, \mathbf y)\subseteq B_{S}(x_i),B_{S}(y_j)$ for all $i,j$. Since $\pi$ is an epimorphism, there exists $M,T>0$ such that for each $(\mathbf x, \mathbf y)$ there exists an element of $V$, which we denote $\phi'(\mathbf x, \mathbf y)$ such that $\|\phi'(\mathbf x, \mathbf y)\|_V\leq M\|\phi(\mathbf x, \mathbf y)\|_W$ and $\supp_V(\phi'(\mathbf x, \mathbf y))\subseteq B_{T}(x_i),B_{T}(y_j)$ for each $i,j$, so $\phi'$ is  of controlled supports and has finite $R$-norms as required. These estimates for the $R$-norms also ensure that $\pi_*$ is open.

Hence by Proposition \ref{exact-completion} we obtain short exact sequences for both the $\EQ$ and $\EW$ bicomplexes. It is now immediate from the snake lemma that we obtain long exact sequences in cohomology:

\[
0\rightarrow \He^0(X,\mathcal U)\rightarrow \He^0\XV\rightarrow \He^0(X,\mathcal W)\rightarrow \He^1(X,\mathcal U)\rightarrow \He^1\XV\rightarrow \He^1(X,\mathcal W)\rightarrow \cdots
\]

\[
0\rightarrow \HQ^0(X,\mathcal U)\rightarrow \HQ^0\XV\rightarrow \HQ^0(X,\mathcal W)\rightarrow \HQ^1(X,\mathcal U)\rightarrow \HQ^1\XV\rightarrow \HQ^1(X,\mathcal W)\rightarrow \cdots
\]

\[
0\rightarrow \HW^0(X,\mathcal U)\rightarrow \HW^0\XV\rightarrow \HW^0(X,\mathcal W)\rightarrow \HW^1(X,\mathcal U)\rightarrow \HW^1\XV\rightarrow \HW^1(X,\mathcal W)\rightarrow \cdots
\]

\end{proof}

As an example we consider the following short exact sequence of $X$-modules, where we are taking the group $G$ to be trivial:

\[
0\rightarrow \ell^1_0(X)\xrightarrow{\iota} \ell^1(X)\xrightarrow{\pi} \C\rightarrow 0
\]

The function spaces are equipped with their usual support functions $\supp(f)=\{x\in X\mid f(x)\not = 0\}$ and $\C$ is equipped with the trivial support function $\supp(\lambda)=\emptyset$ for all $\lambda\in \C$. The map $\iota$ is the standard ``forgetful'' inclusion of $\ell^1_0(X)$ into $\ell^1(X)$ and is easily seen to be a monomorphism. The map $\pi$ is the evaluation of the $\ell^1$ sum and this is an epimorphism. To see this we argue as follows: since the support of any $\lambda\in \C$ is empty it lies within $R$ of any point $x\in X$. We choose the scaled Dirac delta function $\lambda\delta_x\in \ell^1(X)$ which clearly maps to $\lambda$, has norm $|\lambda|$ and $\supp(\lambda\delta_x)=\{x\}$, so putting $M=1$ and $S=0$ satisfies the conditions.

It follows that we obtain a long exact sequence of cohomology:

\[
0\rightarrow \HQ^0(X,\mathcal \ell^1_0)\xrightarrow{\iota_*} \HQ^0(X,\mathcal \ell^1)\xrightarrow{\pi_*} \HQ^0(G,\C )\xrightarrow{D} \HQ^1(X,\mathcal \ell^1_0)\xrightarrow{\iota_*} \HQ^1(X,\mathcal \ell^1)\xrightarrow{\pi_*} \HQ^1(X,\C )\xrightarrow{D} \cdots.
\]

\[
0\rightarrow \HW^0(X,\mathcal \ell^1_0)\xrightarrow{\iota_*} \HW^0(X,\mathcal \ell^1)\xrightarrow{\pi_*} \HW^0(X,\C )\xrightarrow{D} \HW^1(X,\mathcal \ell^1_0)\xrightarrow{\iota_*} \HW^1(X,\mathcal \ell^1)\xrightarrow{\pi_*} \HW^1(X,\C )\xrightarrow{D} \cdots.
\]

\section{A cohomological characterisation of property A}

As an application of the long exact sequence we give our first cohomological characterisation of Yu's property A.

Let $X$ be a metric space, and let $G$ be the trivial group. Recall we have a short exact sequence
\[
0\rightarrow \ell^1_0(X)\xrightarrow{\iota} \ell^1(X)\xrightarrow{\pi} \C\rightarrow 0
\]
where $\C$ is a given the support function where  $\supp(\lambda)$ is empty for all $\lambda$ in $\C$, and $\ell^1_0(X),\ell^1(X)$ are given the usual support functions. Let $\mathbf 1\in \E^{0,-1}(X,\mathbb C)$ denote the constant function 1 on $X$.

\begin{lemma}\label{amazing}
A space $X$ has property $A$ if and only if $\EQ^{0,-1}(X,\ell^1X)$ contains an element $\phi$ such that $D\phi=0$ and $\pi_*\phi=I_Q\mathbf 1$, where $\mathbf 1\in \E^{0,-1}(X,\mathbb C)$ denotes the constant function 1.
\end{lemma}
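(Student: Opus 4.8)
The plan is to unwind both directions through the definition of the quotient completion $\EQ^{0,-1}(X,\ell^1X)$ as bounded sequences in $\E^{0,-1}(X,\ell^1X)$ modulo sequences converging to $0$, and to match this against Definition~\ref{propAdef}.

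\textbf{Unpacking the data.} An element of $\E^{0,-1}(X,\ell^1X)$ is just an equivariant function $\phi:X\to \ell^1(X)$ of controlled supports with $\|\phi\|_R<\infty$ for all $R$; since $G$ is trivial, equivariance is vacuous. The condition $\|\phi\|_R<\infty$ for all $R$ means $\sup_{x}\|\phi(x)\|_{\ell^1}<\infty$ (the $0$-simplex $\Delta_R^1$ is all of $X$), and controlled supports means: for every $R$ there is $S$ with $\supp(\phi(x))\subseteq B_S(x)$ for all $x$ — but since there is no second variable and the $x$-simplex is a point, this is simply the existence of a single $S$ (depending on $\phi$) with $\supp(\phi(x))\subseteq B_S(x)$. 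So $\E^{0,-1}(X,\ell^1X)$ is the space of uniformly bounded, uniformly finitely-supported (in the coarse sense) functions $X\to\ell^1(X)$. A bounded sequence $(\phi_n)$ in this space gives, for each $n$, a uniform support radius $S_n$.

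\textbf{The differential and $\pi_*$.} Recall $D:\E^{0,-1}\to\E^{1,-1}$ is $D\phi(x_0,x_1)=\phi(x_1)-\phi(x_0)$. So $D[(\phi_n)]=0$ in $\EQ^{1,-1}(X,\ell^1X)$ means exactly that for every $R$, $\sup_{d(x_0,x_1)\le R}\|\phi_n(x_1)-\phi_n(x_0)\|_{\ell^1}\to 0$ as $n\to\infty$ — i.e.\ $\phi_n(x_1)-\phi_n(x_0)\to 0$ uniformly on $R$-bounded pairs. The condition $\pi_*[(\phi_n)]=I_Q\mathbf 1$ says $\pi_*\phi_n - \mathbf 1\to 0$ in $\E^{0,-1}(X,\mathbb C)$, i.e.\ $\sum_{y}\phi_n(x)(y)\to 1$ uniformly in $x$. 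Putting these together one sees that the $f_n$ of Definition~\ref{propAdef} can essentially be read off from a normalisation of $\phi_n$, and conversely a Reiter-type sequence $f_n(x)$ (each $f_n(\cdot)$ being uniformly bounded by $1$ in $\ell^1$ and uniformly supported in $B_{S_n}(x)$) defines $\phi_n(x):=f_n(x)$, a bounded sequence in $\E^{0,-1}(X,\ell^1X)$, with $D[(\phi_n)]=0$ and $\pi_*[(\phi_n)]=I_Q\mathbf1$.

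\textbf{The two directions.} For ($\Leftarrow$): given such a $\phi=[(\phi_n)]$, first replace $\phi_n$ by $\phi_n/(\pi_*\phi_n)(x)$ where the denominator is nonzero (it is eventually close to $1$, hence nonzero for $n$ large, uniformly in $x$) to arrange $\sum_y\phi_n(x)(y)=1$ exactly; this does not disturb the $D$-condition asymptotically. Then set $g_n(x)=|\phi_n(x)|$ (pointwise absolute value in $\ell^1$), which is a probability measure after renormalising $\|g_n(x)\|_{\ell^1}$ (which tends to $1$ uniformly since $\sum\phi_n(x)(y)=1$ forces $\|\phi_n(x)\|_{\ell^1}\ge 1$, and boundedness plus the $D$-condition controls it from above uniformly — here one uses that $\|g_n(x_1)-g_n(x_0)\|_{\ell^1}\le\|\phi_n(x_1)-\phi_n(x_0)\|_{\ell^1}$). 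The resulting $f_n(x)$ lie in $\mathrm{Prob}(X)$, are supported in $B_{S_n}(x)$, and satisfy the uniform convergence on $R$-bounded sets required by Definition~\ref{propAdef}. For ($\Rightarrow$): from the $f_n$ of property A set $\phi_n=f_n$ directly; uniform boundedness ($\|f_n(x)\|_{\ell^1}=1$) and the uniform support radius $S_n$ show $(\phi_n)$ is a bounded sequence in $\E^{0,-1}(X,\ell^1X)$, $\pi_*\phi_n=\mathbf 1$ exactly (so $\pi_*\phi=I_Q\mathbf1$), and the Reiter convergence is precisely $D[(\phi_n)]=0$ in $\EQ^{1,-1}$.

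\textbf{Main obstacle.} The routine part is the bookkeeping translating seminorm convergence in the quotient completion into the uniform-convergence statements. The one genuinely delicate point is the passage from a complex-valued ``approximate Reiter'' function $\phi_n$ to an honest probability measure $f_n$: one must normalise the sum to be exactly $1$ and then pass to absolute values, checking at each stage that the controlled-support radius is not destroyed (it is not — taking $|\cdot|$ and rescaling do not enlarge supports) and, crucially, that the contraction estimate $\||\phi_n(x_1)|-|\phi_n(x_0)|\|_{\ell^1}\le\|\phi_n(x_1)-\phi_n(x_0)\|_{\ell^1}$ together with the uniform bound keeps the convergence uniform on each set $\{d(x_0,x_1)\le R\}$ after renormalisation. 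I would isolate that contraction inequality and the uniform control of $\|\phi_n(x)\|_{\ell^1}$ as the key lemmas and let the rest follow formally.
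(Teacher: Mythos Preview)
Your proposal is correct and follows essentially the same route as the paper: in the forward direction you set $\phi_n=f_n$, and in the converse you pass to $|\phi_n(x)|/\|\phi_n(x)\|_{\ell^1}$ after arranging the sum to be $1$, using the contraction $\||a|-|b|\|_{\ell^1}\le\|a-b\|_{\ell^1}$ and the bound $\|\phi_n(x)\|_{\ell^1}\ge 1$ to preserve the uniform convergence. You are in fact slightly more careful than the paper's proof, which simply asserts ``$\pi\phi_n(x)=1$ for all $x,n$'' without justification; the adjustment you describe (or equivalently replacing $\phi_n(x)$ by $\phi_n(x)+(1-\pi\phi_n(x))\delta_x$, as the paper does later in the proof of Theorem~\ref{Triv}) is exactly what is needed to close that small gap.
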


\begin{proof}
Given a sequence of functions $f_n(x)$ as in the definition of property A, we note that $\phi=f$ has the required properties: The fact that $f_x^n$ is a probability measure ensures that $\pi\phi_n(x)=1$ for all $x,n$, that is $\pi_*\phi=I_Q\mathbf 1$. The other hypotheses of Definition \ref{propAdef} are precisely the assertions that $\phi$ is of controlled supports, and $D\phi=0$ in $\EQ^{1,-1}(X, \ell^1X)$.

Conversely, given an element $\phi\in \EQ^{-1}(X,\ell^1X)$ such that $D\phi=0$ and $\pi_*\phi=I_Q\mathbf 1$, represented by a sequence $\phi_n$, we set $f_n(x)(z)=\frac{|\phi_n(x)(z)|}{\|\phi_n(x)\|_{\ell^1}}$. Since $\pi\phi_n(x)=1$ for all $x,n$ we have $\frac 1{\|\phi_n(x)\|_{\ell^1}}\leq 1$. As an element of $\ell^1(X)$, $f_n(x)$ has the same supports as $\phi_n(x)$, in particular $f$ is of controlled supports. The verification that $\|f_n(x_1) - f_n(x_0)\|_{\ell^1}$ tends to 0 uniformly on $\{ (x_0,x_1)\mid d(x_0,x_1)\leq R\}$ follows from the fact that $D\phi=0$ and the estimate $\frac 1{\|\phi_n(x)\|_{\ell^1}}\leq 1$.
\end{proof}

Note that $\mathbf 1\in \E^{0,-1}(X,\mathbb C)$ is a cocycle. Hence (applying $I_Q$, $I_W$) it represents an element $[\mathbf 1_Q]\in \HQ^0(X, \C)$, and another element $[\mathbf 1_W]\in \HQ^0(X, \C)$.

Recalling the long exact sequences in $\HQ,\HW$
\[
0\rightarrow \HQ^0(X,\mathcal \ell^1_0)\xrightarrow{\iota_*} \HQ^0(X,\mathcal \ell^1)\xrightarrow{\pi_*} \HQ^0(X,\C )\xrightarrow{D} \HQ^1(X,\mathcal \ell^1_0)\xrightarrow{\iota_*} \HQ^1(X,\mathcal \ell^1)\xrightarrow{\pi_*} \HQ^1(X,\C )\xrightarrow{D} \cdots.
\]
\[
0\rightarrow \HW^0(X,\mathcal \ell^1_0)\xrightarrow{\iota_*} \HW^0(X,\mathcal \ell^1)\xrightarrow{\pi_*} \HW^0(X,\C )\xrightarrow{D} \HW^1(X,\mathcal \ell^1_0)\xrightarrow{\iota_*} \HW^1(X,\mathcal \ell^1)\xrightarrow{\pi_*} \HW^1(X,\C )\xrightarrow{D} \cdots.
\]
we have classes $D[\mathbf 1_Q]$ in $\HQ^1(X, \ell^1_0(X))$, and $D[\mathbf 1_W]$ in $\HW^1(X, \ell^1_0(X))$.

We make the following definition.

\begin{defn}
An \emph{asymptotically invariant mean} for $X$ is an element $\mu$ in $\EW^{0,-1}(X,\ell^1(X))$ such that $D\mu=0$ and $\pi_*(\mu)=\mathbf 1_W$.
\end{defn}
Let $\delta$ denote the map $X\to \ell^1(X)$, $x\mapsto \delta_x$. We note that as $\pi_*(\delta)=1$, by exactness we have $\pi_*(\mu)=\mathbf 1_W$ if and only if $\delta-\mu$ lies in the image of $\EW^{0,-1}(X,\ell^1_0(X))$.

We now characterise property A as follows:

\begin{thm} \label{D[1]=0}\label{MainTheorem1}
Let $X$ be a discrete metric space. Then the following are equivalent:

\begin{enumerate}
\item $X$ has property $A$.
\item $[\mathbf 1_Q]\in \Im \pi_*$ in $\HQ^0(X, \C)$.
\item $D[\mathbf 1_Q]=0$ in $\HQ^1(X, \ell^1_0(X))$.
\item $D[\mathbf 1_W]=0$ in $\HW^1(X, \ell^1_0(X))$.
\item $[\mathbf 1_W]\in \Im \pi_*$ in $\HW^0(X, \C)$.
\item $X$ admits an asymptotically invariant mean.
\end{enumerate}
\end{thm}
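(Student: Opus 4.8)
The plan is to prove the chain of equivalences by running around the long exact sequences in $\HQ$ and $\HW$ and exploiting Theorem \ref{IQIW} to connect the two. The backbone is the observation, already made in Lemma \ref{amazing}, that property A is precisely the statement that the cocycle $I_Q\mathbf 1\in \EQ^{0,-1}(X,\C)$ lifts through $\pi_*$ to a $D$-cocycle in $\EQ^{0,-1}(X,\ell^1 X)$; by exactness of the long exact sequence this lift exists at the level of cochains if and only if $D[\mathbf 1_Q]=0$ in $\HQ^1(X,\ell^1_0(X))$, which is the content of $(1)\Leftrightarrow (3)$. The equivalence $(2)\Leftrightarrow (3)$ is then pure diagram chasing: $[\mathbf 1_Q]\in \Im\pi_*$ in $\HQ^0(X,\C)$ says exactly that $[\mathbf 1_Q]$ maps to $0$ under the connecting map $D:\HQ^0(X,\C)\to \HQ^1(X,\ell^1_0(X))$, and $D[\mathbf 1_Q]$ is by definition the image of $[\mathbf 1_Q]$ under that connecting map. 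The same two arguments, verbatim with $W$ in place of $Q$, give $(4)\Leftrightarrow (5)$, and $(5)\Leftrightarrow (6)$ is essentially the definition of an asymptotically invariant mean together with the remark that $\pi_*(\mu)=\mathbf 1_W$ exactly when $\mu$ is a $D$-cocycle lifting $\mathbf 1_W$.

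The step that actually carries analytic content, and the one I expect to be the main obstacle, is $(4)\Leftrightarrow (3)$, i.e.\ that the Johnson class $D[\mathbf 1]$ vanishes in the quotient completion if and only if it vanishes in the weak-* completion. This is exactly the assertion $\ker I_Q=\ker I_W$ of Theorem \ref{IQIW}: the class $D[\mathbf 1]\in \He^1(X,\ell^1_0(X))$ is represented by a genuine cochain (the coboundary of $x\mapsto \delta_x$, which has controlled — indeed singleton — supports), so $D[\mathbf 1_Q]=I_Q(D[\mathbf 1])$ and $D[\mathbf 1_W]=I_W(D[\mathbf 1])$, and Theorem \ref{IQIW} says these two images vanish simultaneously. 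So the plan is: first check that $D[\mathbf 1]$ is in the image of the natural map from $\He^1$, then invoke Theorem \ref{IQIW}. The one thing to be careful about is that $D[\mathbf 1_Q]$ and $D[\mathbf 1_W]$ as defined via the long exact sequences agree with $I_Q(D[\mathbf 1])$ and $I_W(D[\mathbf 1])$; this is automatic because the connecting maps are natural with respect to the maps of bicomplexes induced by $I_Q,I_W$, and the inclusion $\mathbf 1\mapsto \mathbf 1_Q,\mathbf 1_W$ commutes with lifting $\mathbf 1$ to $\delta$ and applying $D$.

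Concretely I would organise the write-up as: $(1)\Leftrightarrow (3)$ by Lemma \ref{amazing} and exactness (noting that a $D$-cocycle $\phi\in\EQ^{0,-1}(X,\ell^1 X)$ with $\pi_*\phi=I_Q\mathbf 1$ exists iff $D[\mathbf 1_Q]=0$, since $D[\mathbf 1_Q]$ is the obstruction to lifting $[\mathbf 1_Q]$ to a class in $\HQ^0(X,\ell^1 X)$, and a cochain-level lift is what Lemma \ref{amazing} requires); then $(3)\Leftrightarrow (2)$ and $(4)\Leftrightarrow (5)$ as the tautology "$D[c]=0 \iff [c]\in\ker D=\Im\pi_*$" read off the two long exact sequences; then $(5)\Leftrightarrow (6)$ from the definition of asymptotically invariant mean together with the displayed remark that $\pi_*(\mu)=\mathbf 1_W$ iff $\delta-\mu\in\Im(\EW^{0,-1}(X,\ell^1_0 X))$; and finally $(3)\Leftrightarrow (4)$ from Theorem \ref{IQIW} applied to the cocycle $D\mathbf 1\in \E^{1,-1}(X,\ell^1_0(X))$, whose images in the two completions are precisely $D[\mathbf 1_Q]$ and $D[\mathbf 1_W]$. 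This closes the cycle $(1)\Rightarrow(2)\Rightarrow(3)\Rightarrow(4)\Rightarrow(5)\Rightarrow(6)\Rightarrow(1)$, the last implication following since $(6)\Rightarrow(5)\Rightarrow(4)\Rightarrow(3)\Rightarrow(1)$ have all been established.
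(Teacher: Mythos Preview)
Your proposal is correct and follows essentially the same route as the paper: Lemma \ref{amazing} gives $(1)\Leftrightarrow(2)$, the long exact sequences give $(2)\Leftrightarrow(3)$ and $(4)\Leftrightarrow(5)$, Theorem \ref{IQIW} gives $(3)\Leftrightarrow(4)$, and $(5)\Leftrightarrow(6)$ is the definition. Your extra care in checking that $D[\mathbf 1_Q]=I_Q(D[\mathbf 1])$ and $D[\mathbf 1_W]=I_W(D[\mathbf 1])$ via naturality of the connecting map is a nice touch the paper leaves implicit, but otherwise the arguments coincide.
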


\begin{proof}
We first show (1)$\iff$(2). Identifying $\HQ^0(X, \C)$ with the kernel of $D:\EQ^{0,-1}(X,\C)\to \EQ^{1,-1}(X,\C)$, we have $[\mathbf 1_Q]\in \Im \pi_*$ if and only if $\EQ^{0,-1}(X,\ell^1X)$ contains an element $\phi$ such that $D\phi=0$ and $\pi_*\phi=I_Q\mathbf 1$. This is equivalent to property A by Lemma \ref{amazing}, hence we have shown (1)$\iff$(2).





Conditions (2) and (3) are equivalent by exactness of the long exact sequence in cohomology, while (3) is equivalent to (4) by Theorem \ref{IQIW}. Conditions (4), (5) are equivalent by a further application of the long exact sequence (this time for the weak-* completion). Finally the equivalence of (5) and (6) is immediate from the definition of asymptotically invariant mean.
\end{proof}

To place this in context we consider the equivariant analog for a group. Let $G$ be a countable group equipped with a left invariant proper metric. Again we have two long exact sequences in cohomology:

\[
0\rightarrow \HQ^0(G,\mathcal \ell^1_0(G))\xrightarrow{\iota_*} \HQ^0(G,\mathcal \ell^1(G))\xrightarrow{\pi_*} \HQ^0(G,\C )\xrightarrow{D} \HQ^1(G,\mathcal \ell^1_0(G))\xrightarrow{\iota_*}   \cdots
\]

\[
0\rightarrow \HW^0(G,\mathcal \ell^1_0(G))\xrightarrow{\iota_*} \HW^0(G,\mathcal \ell^1(G))\xrightarrow{\pi_*} \HW^0(G,\C )\xrightarrow{D} \HW^1(G,\mathcal \ell^1_0(G))\xrightarrow{\iota_*}   \cdots
\]

\begin{thm} \label{amenableD[1]=0}
The following are equivalent:

\begin{enumerate}
\item $G$ is amenable.
\item $[\mathbf 1]\in \Im \pi_*$
\item $D[\mathbf 1_Q]=0$ in $\HQ^1(G, \ell^1_0(G))$.
\item $D[\mathbf 1_W]=0$ in $\HW^1(G, \ell^1_0(G))$.
\end{enumerate}
\end{thm}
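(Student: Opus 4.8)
The plan is to transcribe the proof of Theorem~\ref{MainTheorem1}, replacing property A by amenability and the asymptotically invariant Reiter sequence of Definition~\ref{propAdef} by the classical equivariant Reiter sequence recalled in the introduction. The heart of the matter is an equivariant analogue of Lemma~\ref{amazing}: \emph{$G$ is amenable if and only if $\EQ^{0,-1}(G,\ell^1(G))$ contains an element $\phi$ with $D\phi=0$ and $\pi_*\phi=I_Q\mathbf 1$}, where $\mathbf 1\in\E^{0,-1}(G,\C)$ is the constant function $1$ and $\C$ carries the trivial $G$-action and empty support function. Here the short exact sequence $0\to\ell^1_0(G)\to\ell^1(G)\to\C\to 0$ is $G$-equivariant, $\iota$ is an equivariant monomorphism and $\pi$ an equivariant epimorphism, so it induces long exact sequences in $\HQ$ and $\HW$ exactly as in the non-equivariant case.

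To prove this equivariant lemma I would set up the following dictionary. An equivariant $\phi\in\E^{0,-1}(G,\ell^1(G))$ is determined by the single element $f:=\phi(e)\in\ell^1(G)$, and the controlled-support axiom forces $\supp(f)$ to be bounded, hence finite since the metric is proper and left invariant. A bounded sequence $(\phi_n)$ thus corresponds to a sequence $(f_n)$ in $\ell^1(G)$, uniformly bounded in $\ell^1$-norm, each of finite support. Since the action is isometric one computes $\|D\phi_n\|_R=\sup_{d(e,h)\le R}\|f_n-hf_n\|_{\ell^1}$, so $D[\phi_n]=0$ in $\EQ^{1,-1}(G,\ell^1(G))$ translates, using finiteness of the balls $B_R(e)$, into the Reiter condition $\|f_n-hf_n\|_{\ell^1}\to 0$ for every $h\in G$, while $\pi_*\phi=I_Q\mathbf 1$ says $\sum_z f_n(z)\to 1$. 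Given such a $\phi$, the estimates $\|\,|f_n|-h|f_n|\,\|_{\ell^1}\le\|f_n-hf_n\|_{\ell^1}$ and $\|f_n\|_{\ell^1}\ge|\sum_z f_n(z)|\to 1$ show that for large $n$ the functions $g_n:=|f_n|/\|f_n\|_{\ell^1}$ are probability measures of finite support with $\|g_n-hg_n\|_{\ell^1}\to 0$, i.e.\ a Reiter sequence; conversely a Reiter sequence $(f_n)$ of probability measures of finite support extends equivariantly by $\phi_n(x):=xf_n$ to an element $\phi\in\EQ^{0,-1}(G,\ell^1(G))$ with $D\phi=0$ and $\pi_*\phi=I_Q\mathbf 1$. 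Since for a countable discrete group the existence of a Reiter sequence is equivalent to amenability, the equivariant lemma follows.

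With the lemma in hand the remaining equivalences are formal, mirroring Theorem~\ref{MainTheorem1}. Identifying $\HQ^0(G,\C)$ with $\ker\bigl(D\colon\EQ^{0,-1}(G,\C)\to\EQ^{1,-1}(G,\C)\bigr)$ via Corollary~\ref{anothercorollary}, condition~(2) is exactly the statement of the equivariant lemma, giving (1)$\iff$(2). Exactness of $\HQ^0(G,\ell^1(G))\xrightarrow{\pi_*}\HQ^0(G,\C)\xrightarrow{D}\HQ^1(G,\ell^1_0(G))$ gives (2)$\iff$(3). Finally $\mathbf 1\in\E^{0,-1}(G,\C)$ lifts along $\pi_*$ to $\delta\colon x\mapsto\delta_x$ in $\E^{0,-1}(G,\ell^1(G))$, and $D\delta(x_0,x_1)=\delta_{x_1}-\delta_{x_0}$ is the Johnson cocycle representing a class $D[\mathbf 1]\in\He^1(G,\ell^1_0(G))$ whose images under $I_Q$ and $I_W$ are $D[\mathbf 1_Q]$ and $D[\mathbf 1_W]$; since $I_Q$ and $I_W$ are induced by maps of short exact sequences of bicomplexes and hence commute with the connecting maps, and since $\ker I_Q=\ker I_W$ by Theorem~\ref{IQIW}, we obtain (3)$\iff$(4).

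The only step requiring genuine care is the equivariant lemma, and within it the observation that equivariance together with the controlled-support axiom and properness of the metric pins $\phi$ down to the finitely supported function $f_n=\phi_n(e)$ and converts the vanishing of $D\phi$ into Reiter's condition; this is the equivariant incarnation of the remark following Definition~\ref{propAdef}. Everything else is a direct transcription of the non-equivariant argument, using that the snake lemma, Corollary~\ref{anothercorollary} and Theorem~\ref{IQIW} apply verbatim in the equivariant setting.
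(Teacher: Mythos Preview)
Your proof is correct and follows essentially the same route as the paper: an equivariant version of Lemma~\ref{amazing} gives (1)$\iff$(2), the long exact sequence gives (2)$\iff$(3), and Theorem~\ref{IQIW} gives (3)$\iff$(4). Your treatment is in fact more explicit than the paper's in two places---you spell out the dictionary $\phi_n\leftrightarrow f_n=\phi_n(e)$ and the reduction of $D\phi=0$ to Reiter's condition via finiteness of metric balls, and you make precise why Theorem~\ref{IQIW} applies by observing that $D[\mathbf 1_Q]=I_Q(D[\mathbf 1])$ and $D[\mathbf 1_W]=I_W(D[\mathbf 1])$ via naturality of the connecting map---whereas the paper simply normalises a representative directly and declares the converse ``easy''.
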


\begin{proof}
Suppose that $[\mathbf 1]\in \Im \pi_*$. Identifying $\HQ^0(G,\mathcal \ell^1(G))$ with the kernel of $D:\EQ^{0,-1}\rightarrow \EQ^{1,-1}$ we see that there exists a sequence of equivariant functions  $\phi_n:G \rightarrow\ell^1(G)$ which represents a cocycle and for which $1=\sum\limits_h \phi_n(g)(h)\leq \sum\limits_h |\phi_n(g)(h)| =\|\phi_n(g)\|_{\ell^1}$ for every $g\in X$ and every $n\in \naturals$. Set $f_g^n=|\phi_n(g)|/\|\phi_n(g)\|_{\ell^1}$ to obtain an equivariant element of $\text{Prob}(G)$.

For a given $g,n$ $\supp(f_g^n)=\supp(|\phi_n(g)|/\|\phi_n(g))=\supp(\phi_n(g)$. Since $\phi_n$ is of controlled supports given any  $R>0$ there exists $S>0$ such that $\supp(\phi_n(g))\subseteq B_{S}(g)$. Hence $\supp(f^n_g)\subseteq B_{S}(g)$ as required.

Since $\phi_n$ represents a cocycle, $\|D\phi_n((g_0,g_1))\|_{\ell^1}$ converges to zero uniformly on the set $\{(g_0, g_1)\mid d(g_0, g_1\leq R\}$ so 

\[
\|f^n_{g_1} - f^n_{g_0}\|_{\ell^1}{\rightarrow} 0 \text{ as }{n\rightarrow \infty},
\]
uniformly on the set $\{ (g_0, g_1)\mid d(g_0,g_1)\leq R\}$, hence (2) implies (1). The converse is easy.

Conditions (2) and (3) are equivalent by exactness of the long exact sequence in cohomology, while (3) is equivalent to (4) by Theorem \ref{IQIW}.
\end{proof}

This result should be compared with  the statement and proof of Theorem \ref{unboundedbeauty} parts (1), (2), (3) which invoked the long exact sequence:

\[
0\rightarrow H^0(G,( \ell^\infty G/\mathbb C)^*)\xrightarrow{\pi^*} H^0(G,\ell^\infty G^*)\xrightarrow{\iota^*} H^0(G,\C )\xrightarrow{d} H^1(G, ( \ell^\infty G/\mathbb C)^*)\xrightarrow{\pi^*}   \cdots
\]

induced by the short exact sequence

\[
0\rightarrow \ell^1_0(G)\xrightarrow{\iota} \ell^1(G)\xrightarrow{\pi^*} \C\rightarrow 0
\]

We therefore consider the cohomology groups $\HQW^0(G,\mathcal \ell^1(G)), \HQW^0(G,\C ), \HQW^1(G,\mathcal \ell^1_0(G))$, where $\sim$ denotes $Q$ or $W$, as asymptotic analogs of the groups $H^0(G,\ell^\infty G^*),H^0(G,\C ), H^1(G, ( \ell^\infty G/\mathbb C)^*)$ respectively. 


\skipit{

\section{Transfer of module structures by uniform embeddings}
Here we consider transfer of module structures induced by an equivariant uniform embedding of metric spaces, and in particular establish that the ($G$-equivariant) cohomology of a space is an invariant up to ($G$-equivariant) coarse equivalence.

 Let $f:X\rightarrow Y$ be a $G$-equivariant uniform embedding of metric spaces and let $\mathcal V$ be a $G$-equivariant $Y$-module, $(V, \|\cdot\|, \supp_Y)$. We define the coarse pullback $f^*\mathcal V:=(V, \|\cdot\|, \supp_X)$, by $\supp_X(v)=f^{-1}p\supp_Y(v)$ where $p:2^Y\rightarrow 2^{f(X)}$ is defined by 

\[p(A)=\{y\in f(X)\mid \exists a\in A \text{ with }\lceil d_Y(y,a)\rceil< \lceil d_Y(y',a)\rceil, \forall y'\in f(X)\}.
\]

The map $p$ is a coarse version of the nearest point map; when the metric takes values in $\naturals$, e.g., the edge metric on the vertices of a graph, $p(A)$ is the set of nearest points in $f(X)$ to points in $A$.

It is easy to verify that $f^*\mathcal V$ is an $X$-module. The transfer map $f^*$ also induces a map on cohomology as follows:

Given a cochain $\phi\in E^{p,q}(Y, \mathcal V)$ we obtain an element $f^*\phi\in E^{p,q}(X, f^*\mathcal V)$ by setting $f^*\phi(\mathbf x, \mathbf x', n)=\phi(f(\mathbf x), f(\mathbf x'), n)$. Coarseness of $f$ ensures that for any $R\geq 0$ there exists $S\geq 0$ such that $\|f^*\phi\|_R\leq \|\phi\|_S$ and for all $n$, $\|f^*\phi\|_R^n\leq \|\phi\|_S^n$ . 

Given $\phi\in E^{p,q}(Y, \mathcal V)$ and a proper sequence $R_n$ there exists a proper sequence $S_n$ such that $d(y_i, y_j')\leq R_n$ implies that $\supp_Y\phi(\mathbf y, \mathbf y', n)\subseteq  B_{S_n}(y_i)$ for each $i$. Now consider $f^*\phi$ Given a proper sequence $R_n'$ coarseness of $f$ yields a proper sequence $R_n$ such that $d(x_i,  x_j')\leq R_n'$ implies that $d(f(x_i), f(x_j'))\leq R_n$ so there exists a proper sequence $S_n$ such that $d(x_i, x_j')\leq R_n'$ implies that $\supp_Y\phi(f(\mathbf x), f(\mathbf x'), n)\subseteq  B_{S_n}(f(x_i))$ for each $i$. Hence 

\[
p(\supp_Y\phi(f(\mathbf x), f(\mathbf x'), n))\subseteq  p(B_{S_n}(f(x_i)))\subseteq B_{2S_n+1}(f(x_i)).
\]

As $f$ is a uniform embedding it follows that there is a further proper sequence $S_n'$ such that the preimage of $B_{2S_n+1}(f(x_i))$ is contained in $B_{S_n'}(x_i)$, so $\supp_X(f^*\phi(\mathbf x, \mathbf x', n))\subseteq B_{S_n'}(x_i)$ for each $i$. Hence $f^*\phi$ is of controlled supports.

Hence $f^*\phi\in E^{p,q}(X, f^*\mathcal V)$ as required. Moreover if $\phi\in E^{p,q}_0(Y, \mathcal V)$ then $f^*\phi\in E^{p,q}_0(X, f^*\mathcal V)$

We note that $Df^*\phi=f^*D\phi$ and $df^*\phi=f^*d\phi$ by the usual argument so we obtain a map on bicomplexes inducing maps

\begin{align*}
f^*&:\Ho^*(Y, \mathcal V)\rightarrow \Ho^*(X, f^*\mathcal V)\\
f^*&:\HE^*(Y, \mathcal V)\rightarrow \HE^*(X, f^*\mathcal V)\\
f^*&:\HQ^*(Y, \mathcal V)\rightarrow \HQ^*(X, f^*\mathcal V)\\
\end{align*}

In  the special case where $f$ is the inclusion of a subspace $X$ in $Y$ and the group $G$ is trivial we obtain the well known result:

\begin{lemma}
If $X$ is a subspace of a metric space $Y$ and $Y$ has property A then $X$ has property A.
\end{lemma}

\begin{proof}
The transfer map on cocycles is restriction and maps the Johnson element in $\HQ^1(Y, \ell^1_0(Y))$ to  the Johnson element  in $\HQ^1(X, f^*\ell^1_0(Y))$. If $Y$ has property A then its Johnson element is trivial implying that  its image is trivial in  $\HQ^1(X, f^*\ell^1_0(Y))$. But $f^*\ell^1_0(Y)=\ell^1_0(X)$, so this implies that $X$ has property A.
\end{proof}

}

\section{The asymptotically invariant complex}\label{ai-section}

We pause for a moment to recall the classical definition of bounded cohomology for a group. One first takes the homogeneous bar resolution wherein the $k$-dimensional cochains consist of all bounded functions from $G^{k+1}$ to $\mathbb C$ This cochain complex is exact so has trivial cohomology. This is exhibited by taking a basepoint splitting which is induced by the map $G^k\rightarrow G^{k+1}$ given by inserting the basepoint as an additional (first) co-ordinate. Now one takes the $G$-invariant part of this complex, where $G$ acts diagonally and $\mathbb C$ is equipped with the trivial action of $G$. Since the splitting is not equivariant the corresponding cochain complex is not necessarily exact. When the group $G$ is amenable one can average the splitting over orbits using the invariant mean, and this produces an equivariant splitting which therefore kills all the cohomology in dimensions greater than or equal to $1$.

In this section we will carry out an analogous process for property A and a metric space. Replacing the classical (split) cochain complex by the first row of the $\EQ$ bicomplex, $(\EQ^{0,q},d)$, (which is acyclic since $(\E^{0,q},d)$ is acyclic by Proposition \ref{acyclicrows}) we then take the kernels under the vertical differential $D$ to produce a new cochain complex, which we will call the \emph{asymptotically invariant subcomplex of $\EQ$}. The splitting $s$ of the horizontal differential $d$ does not restrict to this cochain complex leaving room for interesting cohomology. This is the analogue of taking the invariant parts in group cohomology. Similarly $(\EW^{0,q},d)$ is acyclic, but taking the kernels under the vertical differential $D$ we obtain the \emph{asymptotically invariant subcomplex of $\EW$}. We will then show that if the space $X$ has property A, one can asymptotically average the splitting to obtain a splitting of the asymptotically invariant complexes. Hence we deduce (analogously to the case of bounded cohomology) that if $X$ has property $A$ then the cohomologies of both asymptotically invariant complexes vanish in all dimensions greater than 0.

\begin{defn}
We say that an element $\phi$ of $\EQ^{0,q}$ (respectively $\EW^{0,q}$) is \emph{asymptotically invariant} if $D\phi=0$ in $\EQ^{1,q}$  (respectively $\EW^{1,q}$) Let $\EQA^q$, $\EWA^q$, denote the sets of asymptotically invariant elements in $\EQ^{0,q}$ and  $\EW^{0,q}$ respectively. We note as usual that this is defined for $q\geq -1$.
\end{defn}

For notational convenience when considering elements of $\E^{0,q}$ we will write $\phi(x,(y_0,\dots,y_q))$, suppressing the parentheses around the single $x$ variable.

The term asymptotically invariant is motivated by the case of $\EQ^{0,q}$. An element of $\EQ^{0,q}$ is asymptotically invariant if it is represented by a sequence $\phi_n:X\times X^{q+1} \to V$ which is asymptotically invariant in the $x$ variable the following sense. For all $R>0$ the difference $\phi_n(x_1,\mathbf y)-\phi_n(x_0,\mathbf y)$ tends to zero uniformly on $\{((x_0,x_1),\mathbf y) \mid d(x_0,x_1)\leq R, \mathbf y\in X^{q+1}\}$.

We remark that it is essential that we first complete the complex $\E$ and then take the kernels of $D$, not the other way around. If we were to take the kernel of $D:\E^{0,q}\to \E^{1,q}$ we would get functions $\phi(x,(y_0,\dots,y_q))$ which are constant in the $x$ variable, that is, we have invariant rather than asymptotically invariant elements. The kernel of $D:\EQ^{0,q}\to\EQ^{1,q}$ will typically be much larger than the completion of these constant functions, and similarly for $\EW$.

We now make the following elementary observation.

\begin{prop}
The differential $d$ maps  $\EQA^q\XV$ to $\EQA^{q+1}\XV$, and maps $\EWA^q$ to $\EWA^{q+1}\XV$. Hence $(\EQA^q\XV,d)$, $(\EWA^q\XV,d)$ are complexes.
\end{prop}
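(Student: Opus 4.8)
The plan is to deduce the statement formally from the anti‑commutation of $D$ and $d$. By construction the differential $d$ already maps $\EQ^{0,q}\XV$ into $\EQ^{0,q+1}\XV$ and $\EW^{0,q}\XV$ into $\EW^{0,q+1}\XV$, so the only point to verify is that $d$ sends the kernel of $D$ in degree $q$ to the kernel of $D$ in degree $q+1$. First I would recall that the extensions of $D$ and $d$ to the completed bicomplexes — which exist and are continuous by Proposition \ref{exact-completion} together with continuity of $D,d$ — are still anti‑commuting differentials, so in particular $Dd=-dD$ and $d^2=0$ on both $\EQ^{\bullet,\bullet}\XV$ and $\EW^{\bullet,\bullet}\XV$. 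Then, for $\phi\in\EQA^q\XV$, i.e.\ $\phi\in\EQ^{0,q}\XV$ with $D\phi=0$, one computes
\[
D(d\phi)=-d(D\phi)=-d(0)=0,
\]
so $d\phi\in\EQ^{0,q+1}\XV$ lies in the kernel of $D$, that is $d\phi\in\EQA^{q+1}\XV$. The identical computation in $\EW$ shows that $d$ maps $\EWA^q\XV$ into $\EWA^{q+1}\XV$, and this argument is uniform in $q\geq-1$, covering the augmented case as well.

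Finally, since $d^2=0$ already holds on the completed bicomplexes, its restriction to the subspaces $\EQA^\bullet\XV$ and $\EWA^\bullet\XV$ again squares to zero, so $(\EQA^q\XV,d)$ and $(\EWA^q\XV,d)$ are genuine cochain complexes. There is essentially no obstacle here: the content of the proposition is just the observation that, in a bicomplex, the kernel of one differential is preserved by its anti‑commuting partner; all the analytic input (continuity of $D,d$, existence and functoriality of the extensions to the completions) has already been supplied by Proposition \ref{exact-completion}.
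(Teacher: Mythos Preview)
Your proof is correct and follows exactly the same approach as the paper: the paper's own proof is the one-line remark that the result is immediate from anti-commutativity of $D$ and $d$, which is precisely the computation $D(d\phi)=-d(D\phi)=0$ you spell out.
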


\begin{proof}
This is immediate from anti-commutativity of the differentials $D,d$.
\end{proof}

Recall that there is a splitting $s:\E^{0,q}\to \E^{0,q-1}$ extending to both generalised completions. Note however that $s$ does not necessarily map either asymptotically invariant complex into itself, as illustrated by the following simple example.

\begin{example}
Let $X$ be the integers equipped with the subspace metric from the reals, and let $\mathcal V$ be the $X$-module $\ell^1\integers$. For elements $x,y_0,y_1\in \integers$ set $\phi(x,(y_0,y_1))=\delta_{y_1}-\delta_{y_0}$, where $\delta_y$ denotes the Dirac function with support $\{y\}$. Clearly $\phi\in \E^{0,1}$, and since $\phi$ is independent of $x$, $D\phi=0$, so $I_Q\phi$ lies in $\EQA^1$, and $I_W\phi$ lies in $\EWA^1$. However 
$$Ds\phi((x_0,x_1),(y_0))=s\phi(x_1,(y_0))-s\phi(x_0,(y_0))=(\delta_{y_0}-\delta_{x_1})-(\delta_{y_0}-\delta_{x_0})=\delta_{x_0}-\delta_{x_1}$$
which has $\ell^1$-norm equal to 2 for all $x_0\neq x_1$. Hence $DsI_Q\phi=I_QDs\phi\neq 0$ and $DsI_W\phi=I_WDs\phi\neq 0$, so neither $sI_Q\phi$ nor $sI_W\phi$ is asymptotically invariant.
\end{example}





\begin{defn}
Let $\HQA^*\XV$ denote the cohomology of the complex $\EQA^*\XV$ and let $\HWA^*\XV$ denote the cohomology of the complex $\EWA^*\XV$
\end{defn}


We will refer to the inclusions of $\EQA^q\XV\hookrightarrow \EQ^{0,q}\XV$ and $\EWA^q\XV\hookrightarrow \EW^{0,q}\XV$ as \emph{augmentation maps}. By analogy with the horizontal case, we will denote these by $D$.

\begin{lemma}
\label{injective-augmentation}
The augmentation maps induce maps on cohomology $\HQA^q\XV\rightarrow \HQ^{q}\XV$ and $\HWA^q\XV\rightarrow \HW^{q}\XV$, which are isomorphisms for $q=0$, and are injective for $q=1$.
\end{lemma}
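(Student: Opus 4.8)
The plan is to recognise the augmentation as an honest cochain map into the totalisation and then to read off both assertions from the shape of the totalisation in total degrees $0$ and $1$. Throughout, write $T^n=\bigoplus_{p+q=n,\ p,q\geq 0}\EQ^{p,q}\XV$ for the totalisation, with total differential $D+d$, and recall that $\EQA^q\XV=\ker\bigl(D\colon\EQ^{0,q}\XV\to\EQ^{1,q}\XV\bigr)$ and that the complex $(\EQA^*\XV,d)$ begins in degree $0$. First I would verify that the inclusion $\EQA^q\XV\hookrightarrow\EQ^{0,q}\XV\subseteq T^q$ is a cochain map $(\EQA^*\XV,d)\to(T^*,D+d)$: for $\phi\in\EQA^q\XV$ we have $D\phi=0$, so the total differential of $\phi$ is just $d\phi$, which is the differential of $\phi$ inside $\EQA^*\XV$ (and lies in $\EQA^{q+1}\XV$ since $Dd\phi=-dD\phi=0$). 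This gives the induced maps $\HQA^q\XV\to\HQ^q\XV$; everything below works verbatim for the weak-$*$ versions $\HWA^*\XV\to\HW^*\XV$.

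For $q=0$ the two groups coincide on the nose. Since $T^{-1}=0$, the group $\HQ^0\XV$ is the space of total $0$-cocycles, namely $\{\phi\in\EQ^{0,0}\XV\mid D\phi=0,\ d\phi=0\}$; and as $\EQA^*\XV$ has nothing in negative degree, $\HQA^0\XV=\ker(d\colon\EQA^0\XV\to\EQA^1\XV)$ unwinds to exactly the same set. The induced map is the identity, hence an isomorphism.

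For injectivity at $q=1$, suppose $\phi\in\EQA^1\XV$ is a $d$-cocycle whose class maps to $0$ in $\HQ^1\XV$. Then $\phi$, viewed as the element of $T^1=\EQ^{1,0}\XV\oplus\EQ^{0,1}\XV$ supported in the second summand, is a total coboundary $(D+d)\gamma$ for some $\gamma\in T^0=\EQ^{0,0}\XV$. Comparing the two components of $T^1$ forces $D\gamma=0$ and $d\gamma=\phi$; the first says $\gamma\in\EQA^0\XV$, and the second then displays $\phi$ as a coboundary in $(\EQA^*\XV,d)$, so its class already vanishes in $\HQA^1\XV$.

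I do not expect a genuine obstacle here: the content is entirely in getting the conventions right — that $T^0$ has the single summand $\EQ^{0,0}\XV$, so that a total coboundary of an element of $\EQ^{0,1}\XV$ is forced to come from $\EQ^{0,0}\XV$ with vanishing $D$-differential, and that $\EQA^*\XV$ is the unaugmented complex. (The same bookkeeping shows the argument genuinely needs $q\leq 1$, since for $q\geq 2$ a total coboundary of an element of $\EQ^{0,q}\XV$ may also involve $\EQ^{1,q-1}\XV,\dots$.) No averaging or functional-analytic input enters, in contrast to the vanishing theorems that follow.
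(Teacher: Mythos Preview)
Your proof is correct and follows essentially the same approach as the paper: both verify that the inclusion is a cochain map into the totalisation, identify the $0$-cocycles on the nose, and for $q=1$ use that $T^0=\EQ^{0,0}\XV$ has a single summand so that a total coboundary $(D\oplus d)\gamma=(0,\phi)$ forces $D\gamma=0$, placing $\gamma$ in $\EQA^0\XV$. Your write-up is slightly more explicit about the conventions (e.g.\ $T^{-1}=0$, why the argument fails for $q\geq 2$), but the content is identical.
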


\begin{proof}
Let $\phi$ be a cocycle in $\EQA^q\XV$. Then $D\phi=0$ since $\phi \in \EQA^q\XV$, and $d\phi=0$ since $\phi$ is a cocycle. Hence including $\EQA^q\XV$ into $\EQ^{0,q}\XV$, the element $\phi$ is a cocycle in the totalisation of the bicomplex, yielding a map $\HQA^q\XV\to \HQ^{q}\XV$. In degree 0 every cocycle is non-trivial, and the condition that $\phi$ is a cocycle is that $D\phi=0$ and $d\phi=0$ in both theories, whence the map is an isomorphism. In degree 1, if $\phi\in \E^{0,1}\XV$ is a coboundary in the totalisation of the bicomplex then there is an element $\psi$ of $\EQ^{0,0}\XV$ such that $(D\oplus d)\psi$ is $(0\oplus \phi)$ in $\EQ^{1,0}\XV\oplus \EQ^{0,1}\XV$. That is $D\psi=0$, so $\psi$ is an element of $\EQA^{0}\XV$, and $d\psi=\phi$. Hence $\phi$ is also a coboundary in $\EQA^1\XV$. Hence the inclusion of $\EQA^1\XV$ into $\EQ^{0,1}\XV$ gives an injection of cohomology.

The proof for $\EW$ is identical.
\end{proof}

Now we restrict to the case where $G$ is trivial, and $\mathcal V$ is $\ell^1_0(X)$, equipped with the $\ell^1$ norm and the usual support function. Consider the Johnson elements $\J^{0,1}(x,(y_0,y_1))=\delta_{y_1}-\delta_{y_0}$ in $\E^{0,1}\XV$ and $\J^{1,0}((x_0,x_1),(y))=\delta_{x_1}-\delta_{x_0}$ in $\E^{1,0}\XV$. Since $D\J^{0,1}=0$ and $d\J^{0,1}=0$ we deduce that $I_Q\J^{0,1}$,  $I_W\J^{0,1}$ give classes in $\HQA^1(\XV)$ and $\HQA^1(\XV)$, which we will denote $[\J_Q^{0,1}]$ and $[\J_W^{0,1}]$.

Applying the augmentation map to $[\J_Q^{0,1}]$ we obtain an element of $\HQ^1\XV$. We will verify that this is cohomologous to $D[\mathbf 1_Q]$, hence the vanishing of $[\J_Q^{0,1}]$ is equivalent to property A. Similarly $[\J_W^{0,1}]$ is cohomologous to $D[\mathbf 1_W]$ and the vanishing of $[\J_Q^{0,1}]$ is equivalent to property A.

\begin{thm}\label{PropA} The augmentation map for $\HQ$ takes $[\J_Q^{0,1}]$ in $\HQA^1(X, \mathcal \ell^1_0(X))$ to $D[\mathbf 1_Q]$ in $\HQ^1(X, \mathcal \ell^1_0(X))$, and the augmentation map for $\HW$ takes the class $[\J_W^{0,1}]$ in $\HWA^1(X, \mathcal \ell^1_0(X))$ to the class $D[\mathbf 1_W]$ in $\HW^1(X, \mathcal \ell^1_0(X))$. Hence the following are equivalent:
\begin{enumerate}
\item $X$ has property $A$.
\item $[\J_Q^{0,1}]=0$ in $\HQA^1(X, \mathcal \ell^1_0(X))$.
\item $[\J_W^{0,1}]=0$ in $\HWA^1(X, \mathcal \ell^1_0(X))$.
\end{enumerate}
\end{thm}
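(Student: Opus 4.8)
The plan is to reduce the three-way equivalence to results already in hand, investing the real work in the first sentence of the theorem: that the two augmentation maps carry the Johnson classes $[\J_Q^{0,1}]$ and $[\J_W^{0,1}]$ to $D[\mathbf 1_Q]$ and $D[\mathbf 1_W]$. Granting that, the equivalences fall out formally. By Lemma \ref{injective-augmentation} the degree-one augmentations $\HQA^1(X,\ell^1_0(X))\to\HQ^1(X,\ell^1_0(X))$ and $\HWA^1(X,\ell^1_0(X))\to\HW^1(X,\ell^1_0(X))$ are injective, so $[\J_Q^{0,1}]=0$ iff $D[\mathbf 1_Q]=0$ and $[\J_W^{0,1}]=0$ iff $D[\mathbf 1_W]=0$; and Theorem \ref{D[1]=0} already says that property $A$ is equivalent to each of $D[\mathbf 1_Q]=0$ and $D[\mathbf 1_W]=0$. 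Chaining these gives (1)$\iff$(2)$\iff$(3).

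For the identification of the augmentation images I would work inside the totalisation and use an explicit homotopy. Recall from the proof of Lemma \ref{injective-augmentation} that the augmentation sends an asymptotically invariant $d$-cocycle $\phi\in\EQ^{0,q}$ to the class of $\phi$ in degree $q$ of the totalisation, and recall how $D[\mathbf 1_Q]$ arises from the connecting map of the long exact sequence: one lifts the constant cocycle $\mathbf 1$ along $\pi$ to $\delta\colon x\mapsto\delta_x$ and applies $D$, obtaining the degree-one totalisation cocycle $\J^{1,0}((x_0,x_1),(y))=\delta_{x_1}-\delta_{x_0}$ (constant in $y$, lying in $\EQ^{1,0}(X,\ell^1_0(X))$). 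So it suffices to produce a cochain $\psi\in\E^{0,0}(X,\ell^1_0(X))$ with $d\psi=\J^{0,1}$ and $D\psi=-\J^{1,0}$, for then the totalisation coboundary $(D+d)\psi=(-\J^{1,0},\J^{0,1})$ shows that the cocycle $(0,\J^{0,1})$ representing the augmentation image of $[\J_Q^{0,1}]$ is cohomologous to $(\J^{1,0},0)$, the representative of $D[\mathbf 1_Q]$ just identified.

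The natural candidate is $\psi=s\J^{0,1}$, where $s$ is the splitting of Proposition \ref{acyclicrows}; explicitly $\psi(x,(y))=\delta_y-\delta_x$. By construction $\psi$ lies in $\E^{0,0}(X,\ell^1_0(X))$ (it is $s$ applied to $\J^{0,1}\in\E^{0,1}(X,\ell^1_0(X))$, and by Proposition \ref{acyclicrows} the splitting $s$ preserves equivariance, controlled supports and the seminorms), and $d\psi=\J^{0,1}$ holds because $d\J^{0,1}=0$ forces $ds\J^{0,1}=(ds+sd)\J^{0,1}=\J^{0,1}$. The one new computation is the one-line identity $D\psi((x_0,x_1),(y))=(\delta_y-\delta_{x_1})-(\delta_y-\delta_{x_0})=\delta_{x_0}-\delta_{x_1}=-\J^{1,0}((x_0,x_1),(y))$. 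Hence the augmentation carries $[\J_Q^{0,1}]$ to $D[\mathbf 1_Q]$.

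The $\HW$ statement follows in the same way: $\psi$ and $x\mapsto\delta_x$ live in the uncompleted complex, hence in the $\EW$-completion via $I_W$, and $I_W$ intertwines $D$, $d$, the row-augmentations and the connecting maps, so the identical computation gives that the augmentation carries $[\J_W^{0,1}]$ to $D[\mathbf 1_W]$; alternatively one transports the $\HQ$-identity along $e_\omega$ using Proposition \ref{e_omega}, Theorem \ref{IQIW} and naturality of $e_\omega$ with respect to the long exact sequence. I do not anticipate a conceptual obstacle here — the theorem is essentially a repackaging of Corollary \ref{anothercorollary}, Lemma \ref{injective-augmentation} and Theorem \ref{D[1]=0}. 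The only point needing care is the sign bookkeeping across the totalisation differential $D+d$ and the snake-lemma connecting homomorphism; choosing the homotopy to be $s\J^{0,1}$ (rather than its negative) is what makes the signs cancel, and in any case an overall sign would be immaterial for the vanishing statements (2) and (3).
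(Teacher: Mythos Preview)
Your proposal is correct and follows essentially the same approach as the paper: both use the cochain $\psi(x,(y))=\delta_y-\delta_x$ in $\E^{0,0}(X,\ell^1_0(X))$ to exhibit $\J^{0,1}$ and $\J^{1,0}$ as cohomologous in the totalisation, then invoke Lemma \ref{injective-augmentation} and Theorem \ref{D[1]=0} to obtain the equivalences. Your observation that $\psi=s\J^{0,1}$ is a nice way to motivate this cochain, but the computation and logical structure are the same as the paper's.
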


\begin{proof}
The Johnson classes $D[\mathbf 1_Q],D[\mathbf 1_W]$ are given by respectively $I_Q[\J^{1,0}]$ and $I_W[\J^{1,0}]$. We will show that $\J^{1,0}$ is cohomologous to $\J^{0,1}$ in the totalisation of $\E^{*,*}(X, \mathcal \ell^1_0(X))$. From this it is immediate that we have $[\J_Q^{0,1}]=[\J_Q^{1,0}]=D[\mathbf 1_Q]$ in $\HQ^1(X, \mathcal \ell^1_0(X))$ and $[\J_W^{0,1}]=[\J_W^{1,0}]=D[\mathbf 1_W]$ in $\HW^1(X, \mathcal \ell^1_0(X))$.

Let $\phi(x,(y))=\delta_y-\delta_x$. We note that this lies in $\E^{0,0}(X,\ell^1_0(X))$. Computing the coboundaries we have $D\phi((x_0,x_1),(y))=-\delta_{x_1}+\delta_{x_0}=-\J^{1,0}((x_0,x_1),(y))$, while $d\phi(x,(y_0,y_1))=\delta_{y_1}-\delta_{y_0}=\J^{0,1}(x,(y_0,y_1))$. Hence $(D\oplus d)\phi=-\J^{1,0}\oplus \J^{0,1}$, whence $[\J^{1,0}]=[\J^{0,1}]$ in $\He^1(X, \mathcal \ell^1_0(X))$.

By Lemma \ref{injective-augmentation}, $[\J_Q^{0,1}]$ is zero in $\HQA^1(X, \mathcal \ell^1_0(X))$ if and only if it is zero in $\HQ^1(X, \mathcal \ell^1_0(X))$, and we have seen that the latter is $D[\mathbf 1_Q]$. By Theorem \ref{D[1]=0}, this vanishes if and only if $X$ has property A.

Similarly $[\J_W^{0,1}]$ is zero in $\HWA^1(X, \mathcal \ell^1_0(X))$ if and only if it is zero in $\HW^1(X, \mathcal \ell^1_0(X))$, and $[\J_W^{0,1}]=D[\mathbf 1_W]=0$ in $\HW^1(X, \mathcal \ell^1_0(X))$ if and only if $X$ has property A.
\end{proof}

\begin{section} {Vanishing theorems}

We have seen that the map $s$ does not split the coboundary map $d$ in the complexes $\EQA^*,\EWA^*$, however if $X$ has property A then we can use the probability functions given by the definition to asymptotically average the $s\phi$. Having done so we will obtain an actual splitting for the cochain complex, demonstrating the vanishing of the cohomology.

We will make use of the following convolution operator.

\begin{defn}
For $f\in\E^{p,-1}(X,\one(X))$ and $\theta\in \E^{0,q}(X,V)$, define $f*\theta$ by
$$(f*\theta)(\bfx,\mathbf y) = \sum_{z} f(\mathbf x)(z)\theta(z,\bfy).$$
\end{defn}

We make the following estimate:
$$
\|f*\theta\|_R\leq \sup\limits_{\bfx \in \Delta^{p+1}_R, \bfy\in X^{q+1}}\sum\limits_{z\in X}|f(\bfx)(z)|\|\theta(z,\bfy)\|_V\leq \sup\limits_{\bfx\in \Delta_R^{p+1}}\sum\limits_z|f({\bf x},(z))|\|\theta\| = \norm{f}_R\norm{\theta}.
$$
We remark that as $\theta$ lies in the bottom row of the bicomplex, its $R$-norm does not in fact depend on $R$, hence we suppress it from the notation.

This estimate shows that for each $f$ the map $\theta\mapsto f*\theta$ is continuous, and for each $\theta$ the map $f\mapsto f*\theta$ is continuous.



We note that $D(f*\phi)(\bfx,\bfy)=\sum\limits_{z}\sum\limits_{i}(-1)^i f((x_0,\dots,\widehat{x_i},\dots,x_p))(z)\phi(z,\mathbf y)=(Df)*\phi(\bfx,\bfy)$, by exchanging the order of summation.

Similarly $d(f*\phi)(\bfx,\bfy)=(f* d\phi)(\bfx,\bfy)$.

\medskip

The convolution extends in an obvious way to the quotient completion. For $f\in \EQ^{q,-1}(X,\one(X))$, $\phi\in \EQ^{0,q}(X,V)$ we define $f*\phi\in\EQ^{p,q}(X,V)$ by $(f*\phi)_n = f_n*\phi_n$. We note that if either of the sequences $f_n,\phi_n$ tends to 0 as $n\to\infty$, then $(f*\phi)_n$ tends to 0, by the above norm estimate. Hence the convolution is a well defined map
$$\EQ^{p,-1}(X,\one(X)) \times \EQ^{0,q}(X,V)\to \EQ^{p,q}(X,V),$$
i.e.\ as an element of $\EQ^{p,q}(X,V)$, the convolution $f*\phi$ does not depend on the choice of sequences representing $f,\phi$.


Since the convolution is defined term-by-term in $n$, the identities $D(f*\phi)=(Df)*\phi$ and $d(f*\phi)=f*d\phi$ carry over to the quotient completion.

We recall that by Lemma \ref{amazing}, property A is equivalent to the existence of an element $f$ of $\EQ{0,-1}(X,\one(X))$ with $Df=0$ and $\pi_*(f)=I_Q\mathbf 1$. The idea of the proof of the following theorem is that convolving with $f$ allows us to average the splitting $s\phi$, to get an asymptotically invariant element. 

\begin{thm}\label{Triv} If $X$ is a metric space satisfying Yu's property A, then for every $q\geq 1$, $\HQA^q(X, \mathcal V)=0$ for every $X$-module $\mathcal V$. 
\end{thm}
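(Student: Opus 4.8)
The plan is to mimic the averaging argument from bounded cohomology, using the property A element $f\in\EQ^{0,-1}(X,\one(X))$ with $Df=0$ and $\pi_*(f)=I_Q\mathbf 1$ supplied by Lemma~\ref{amazing}, and the convolution operator $f*(-)$ developed above. The claim is that the map $S:\EQA^q(X,\calv)\to\EQA^{q-1}(X,\calv)$ defined by $S\phi = f*(s\phi)$ is a genuine chain contraction for the asymptotically invariant complex in degrees $q\geq 1$, that is, $dS+Sd = \mathrm{id}$ on $\EQA^q(X,\calv)$ for $q\geq 1$.

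First I would check that $S$ actually lands in the asymptotically invariant complex. For $\phi\in\EQA^q(X,\calv)$ we have $DS\phi = D(f*s\phi) = (Df)*(s\phi) = 0$ since $Df=0$, so $S\phi$ lies in $\EQ^{0,q-1}(X,\calv)$ with $DS\phi=0$, i.e.\ in $\EQA^{q-1}(X,\calv)$; continuity of the convolution and of $s$ (both established above) ensures $S$ is well-defined on the quotient completion. Next, the key computation: on the full bicomplex $(\E^{0,*},d)$ the splitting $s$ satisfies $(ds+sd)\psi=\psi$ for $\psi$ in positive horizontal degree (Proposition~\ref{acyclicrows}), and this passes to $\EQ^{0,*}$ by continuity. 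Convolving with $f$ and using $d(f*\theta)=f*d\theta$, we get, for $\phi\in\EQA^q(X,\calv)$ with $q\geq 1$,
\[
dS\phi + Sd\phi = d(f*s\phi) + f*(s\,d\phi) = f*(ds\phi + sd\phi) = f*\phi.
\]
It remains to identify $f*\phi$ with $\phi$ as elements of $\EQ^{0,q}(X,\calv)$. Since $\pi_*(f)=I_Q\mathbf 1$ means $\sum_z f_n(x)(z)=\pi(f_n(x))\to 1$ uniformly (in fact, exploiting $Df=0$ and $\|f_n(x)\|_1\geq 1$, one extracts that the ``mass defect'' is controlled), we have $(f*\phi)_n(x,\mathbf y) = \sum_z f_n(x)(z)\phi_n(z,\mathbf y)$. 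Here the asymptotic invariance of $\phi$ (i.e.\ $D\phi=0$, so $\phi_n(z,\mathbf y)-\phi_n(x,\mathbf y)\to 0$ uniformly on $R$-controlled sets, and supports are controlled so only $z\in B_{S_n}(x)$ contribute) lets us replace $\phi_n(z,\mathbf y)$ by $\phi_n(x,\mathbf y)$ up to an error tending to $0$ in each $R$-seminorm, giving $(f*\phi)_n(x,\mathbf y) \sim \bigl(\sum_z f_n(x)(z)\bigr)\phi_n(x,\mathbf y) \to \phi_n(x,\mathbf y)$. Hence $f*\phi = \phi$ in $\EQ^{0,q}(X,\calv)$, so $dS+Sd=\mathrm{id}$ and the cohomology vanishes in degrees $q\geq 1$.

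The main obstacle is precisely this last identification $f*\phi=\phi$: one must combine three uniformities --- the controlled supports of $f$ (support of $f_n(x)$ in $B_{S_n}(x)$), the asymptotic invariance of $\phi$ in its $x$-variable, and the fact that the total mass $\sum_z f_n(x)(z)$ tends to $1$ --- and verify the error estimate holds in \emph{each} seminorm $\|\cdot\|_R$ simultaneously (not uniformly in $R$, which is all that is needed since a sequence tends to $0$ in the pre-Fr\'echet topology iff it does so in every seminorm). One subtlety worth checking carefully: the support of $f_n(x)$ grows with $n$, so the set of $z$ contributing to $(f*\phi)_n(x,\mathbf y)$ is not of uniformly bounded diameter; however for fixed $R$ one only needs $d(x_0,x_1)\leq R$ (no constraint forcing $z$ close to $x$ at scale $R$), and the cancellation comes from asymptotic invariance of $\phi$ at the \emph{variable} scale $S_n$, which is exactly the content of $D\phi=0$ in the quotient completion when combined with the controlled-support hypothesis on $\phi$ itself. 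Once this estimate is in place the argument is formal.
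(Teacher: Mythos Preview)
Your overall strategy matches the paper's: define $s_f\phi = f*(s\phi)$ using a property~A element $f$, check $Ds_f\phi=(Df)*s\phi=0$, and compute $(ds_f+s_fd)\phi=f*\phi$. The paper also normalises so that $\sum_z f_n(x)(z)=1$ exactly (by replacing $f_n'(x)$ with $f_n'(x)+(1-\pi f_n'(x))\delta_x$), which cleans up the ``mass defect'' issue you allude to.

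There is, however, a genuine gap precisely where you flag the ``main obstacle'': the identification $f*\phi=\phi$ does \emph{not} follow from $D\phi=0$ alone when $f$ is fixed in advance. The estimate you need is
\[
\|(f_n*\phi_n-\phi_n)(x,\mathbf y)\|_V \;\leq\; \sum_{z} |f_n(x)(z)|\,\|D\phi_n((x,z),\mathbf y)\|_V \;\leq\; \|f_n\|\,\|D\phi_n\|_{S_n},
\]
where $S_n$ is the support radius of $f_n(x)$. But $D\phi=0$ in $\EQ$ says only that $\|D\phi_n\|_R\to 0$ for each \emph{fixed} $R$; it gives no control of $\|D\phi_n\|_{S_n}$ along a sequence $S_n\to\infty$. (Think of $\|D\phi_n\|_R=R/n$: this tends to $0$ for every fixed $R$, yet $\|D\phi_n\|_{S_n}\to\infty$ if $S_n=n^2$.) Your sentence ``the cancellation comes from asymptotic invariance of $\phi$ at the variable scale $S_n$, which is exactly the content of $D\phi=0$'' is therefore not correct, and the controlled-support hypothesis on $\phi$ does not help here.

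The paper repairs this by letting $f$ depend on $\phi$. Given $\phi$, a diagonal argument produces a sequence $S_n\to\infty$ slowly enough that $\|D\phi_n\|_{S_n}\to 0$; one then \emph{repeats terms} of the property~A sequence $f_n$ so that $\supp f_n(x)\subseteq B_{S_n}(x)$ for all $n$ (this still represents an element of $\EQ^{0,-1}(X,\one(X))$ with $Df=0$ and $\pi_*f=I_Q\mathbf 1$). With this $\phi$-dependent choice the estimate above yields $f*\phi=\phi$, hence $\phi=ds_f\phi$ when $d\phi=0$. So you do not get a single chain contraction $S$ on $\EQA^*$; rather, for each cocycle $\phi$ you build a primitive $s_f\phi$ with $f=f(\phi)$, which is all that is needed for vanishing of cohomology. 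The paper remarks on this dependence explicitly.
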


\begin{proof}
Let $\phi\in \EQA^q\XV$ with $q\geq 1$. The element $\phi$ is represented by a sequence $\phi_n$ in $\E^q\XV$ and $s\phi$ is represented by the sequence
\[
s\phi_n(x,(y_0, \ldots, y_{q-1}))= \phi_n(x,(x,y_0, \ldots, y_{q-1})).
\]

Since $D\phi=0$, the sequence $D\phi_n$ tends to zero, that is for all $R>0$, $\|D\phi_n\|_R\to 0$ as $n\to\infty$. By a diagonal argument, if $S_n$ is a sequence tending to infinity sufficiently slowly, then $\|D\phi_n\|_{S_n}\to 0$ as $n\to\infty$. We choose a sequence $S_n$ with this property.

Take an element $f$ in $\EQ^{0,-1}(X,\one(X))$ with $Df=0$ and $\pi_*(f)=I_Q\mathbf 1$, and let $f_n$ be a sequence representing $f$. For simplicity we assume that $\pi(f_n(x))=1$ for all $x,n$; if $f_n'$ is a sequence representing $f$, then $f_n'(x)+(1-\pi(f_n(x)))\delta_x$ also represents $f$, and has sum 1 as required.

By repeating the terms of the sequence $f_n$ we can arrange that $\supp(f_n(x))\subseteq B_{S_n}(x)$ for all $x,n$. Note that our choice of $f$ therefore depends on $S_n$ and hence on $\phi$.

As a remark in passing, we note that taking such a `supersequence' of $f_n$ corresponds in some sense to taking a subsequence of $\phi_n$. If we were working in the classical completion $E_{\mathrm{cs}}/E_0$, then the subsequence would represent the same element of $E_{\mathrm{cs}}/E_0$, however for $\EQ$ this need not be true.

For each $q'$ we now define $s_f:\EQ^{0,q'}(X,V)\to \EQ^{0,q'-1}(X,V)$ by $s_f\psi=f*s\psi$. We first note that for any $\psi$ we have $s_f\psi$ asymptotically invariant. This follows from asymptotic invariance of $f$, since $Ds_f\phi=D(f*s\phi)=(Df)*s\phi=0$. Hence in fact we have a map $s_f:\EQ^{q'}(X,V)\to \EQA^{q'-1}(X,V)$ which restricts to the asymptotically invariant complex.

We claim that for our given $\phi$ we have $(ds_f+s_fd)\phi=\phi$. We have $ds_f\phi=d(f*s\phi)=f*ds\phi$, while $s_fd\phi=f*sd\phi$ by definition. Hence $(ds_f+s_fd)\phi=f*(ds+sd)\phi=f*\phi$ since $ds+sd=1$. It thus remains to show that $f*\phi=\phi$. Notice that since $\sum\limits_{z\in X}f_n(x)(z)=1$ we have $\phi_n(x,\mathbf y)=\sum\limits_{z\in X}f_n(x)(z)\phi_n(x, \mathbf y)$, so we have
\begin{align*}
(f_n*\phi_n-\phi_n)(x,\mathbf y)&=\sum\limits_{z\in X}f_n(x)(z)(\phi_n(z,\mathbf y)-\phi_n(x,\mathbf y))\\
&=\sum\limits_{z\in X}f_n(x)(z)D\phi_n((x,z),\mathbf y).\\
\end{align*}

Taking norms we have $\norm{f_n*\phi_n-\phi_n}\leq \norm{f_n}\norm{D\phi_n}_{S_n}$, since if $d(x,z)>S_n$ then $f_n(x)(z)$ vanishes. We know that $\norm{D\phi_n}_{S_n}\to 0$ as $n\to\infty$, hence we conclude that $f*\phi-\phi=0$ in $\EQA^q\XV$.

We have shown that for every element $\phi \in \EQA^q\XV$ with $q\geq 1$, we can construct maps $s_f:\EQA^{q'}\XV\to \EQA^{q'-1}\XV$ such that $(ds_f+s_fd)\phi=\phi$. (We remark that the choice of $f$, and hence the map $s_f$ depends on the element $\phi$ in question.) It follows that if $\phi$ is a cocycle then $\phi=(ds_f+s_fd)\phi=ds_f\phi$, so every cocycle is a coboundary. Thus we deduce that $\HQA^q(X, \mathcal V)=0$ for $q\geq 1$.
\end{proof}

Combining this with Theorem \ref{PropA} we obtain the following.

\begin{thm}\label{MainTheorem2}
Let $X$ be a discrete metric space. Then the following are equivalent:
\begin{enumerate}
\item $\HQA^q(X, \mathcal V)= 0$ for all $q\geq 1$ and all modules $\mathcal V$ over $X$.
\item $[\J_Q^{0,1}]=0$ in $\HQA^1(X, \mathcal \ell^1_0(X))$.
\item $X$ has property $A$.
\end{enumerate}
\end{thm}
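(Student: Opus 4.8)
The plan is to prove the cycle of implications (3)$\Rightarrow$(1)$\Rightarrow$(2)$\Rightarrow$(3), since the pieces needed are essentially in hand. The implication (3)$\Rightarrow$(1) is precisely Theorem~\ref{Triv}: if $X$ has property~A then $\HQA^q(X,\mathcal V)=0$ for all $q\geq 1$ and every $X$-module $\mathcal V$, by the asymptotic averaging argument using the convolution operator $f*\,\cdot\,$ with a Reiter-type element $f\in\EQ^{0,-1}(X,\one(X))$ satisfying $Df=0$ and $\pi_*(f)=I_Q\mathbf 1$ (whose existence is Lemma~\ref{amazing}).

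The implication (1)$\Rightarrow$(2) is immediate once we specialise: taking $\mathcal V=\ell^1_0(X)$ and $q=1$, hypothesis (1) gives $\HQA^1(X,\ell^1_0(X))=0$, so in particular the class $[\J_Q^{0,1}]$ vanishes. No work is required beyond noting that $[\J_Q^{0,1}]$ is a well-defined element of this group, which was already established (it is the image under $I_Q$ of the cocycle $\J^{0,1}(x,(y_0,y_1))=\delta_{y_1}-\delta_{y_0}$, which satisfies $D\J^{0,1}=0$ and $d\J^{0,1}=0$).

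The implication (2)$\Rightarrow$(3) is exactly the content of Theorem~\ref{PropA}: vanishing of $[\J_Q^{0,1}]$ in $\HQA^1(X,\ell^1_0(X))$ is equivalent to property~A. Recall the chain of identifications there: by Lemma~\ref{injective-augmentation} the augmentation map $\HQA^1\to\HQ^1$ is injective, so $[\J_Q^{0,1}]=0$ in $\HQA^1$ iff its image vanishes in $\HQ^1(X,\ell^1_0(X))$; that image equals $D[\mathbf 1_Q]$, because $\J^{0,1}$ and $\J^{1,0}$ are cohomologous in the totalisation of $\E^{*,*}$ via the cochain $\phi(x,(y))=\delta_y-\delta_x\in\E^{0,0}(X,\ell^1_0(X))$, so $[\J_Q^{0,1}]=[\J_Q^{1,0}]=D[\mathbf 1_Q]$; and finally $D[\mathbf 1_Q]=0$ iff $X$ has property~A by Theorem~\ref{D[1]=0}.

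Since all three implications reduce to results already proved in the paper, there is no genuine obstacle here; the only thing to be careful about is bookkeeping — making sure the various incarnations of the Johnson class ($[\J_Q^{0,1}]$ in $\HQA^1$, its image in $\HQ^1$, the class $[\J_Q^{1,0}]$, and $D[\mathbf 1_Q]$) are correctly matched up through the augmentation map and the long exact sequence, which is precisely what Theorem~\ref{PropA} records. Accordingly the proof is simply: "(3)$\Rightarrow$(1) is Theorem~\ref{Triv}; (1)$\Rightarrow$(2) is the case $q=1$, $\mathcal V=\ell^1_0(X)$; (2)$\Rightarrow$(3) is Theorem~\ref{PropA}."
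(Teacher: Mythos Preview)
Your proposal is correct and matches the paper's own argument exactly: the paper simply states that the theorem follows by combining Theorem~\ref{Triv} (giving (3)$\Rightarrow$(1)) with Theorem~\ref{PropA} (giving (2)$\Leftrightarrow$(3)), with (1)$\Rightarrow$(2) being the obvious specialisation. There is nothing to add.
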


\bigskip

We will now prove a corresponding result for the weak-* completion. The role of $f$ in the above argument will be replaced by an asymptotically invariant mean $\mu$ in $\EW^{0,-1}(X,\one(X))$.

We begin by extending the convolutions to the weak-* completions. First we define $f*\phi$ for $f\in\E^{p,-1}(X,\one(X))$ and $\phi\in \EW^{0,q}(X,V)$. This is defined via its pairing with an element $\alpha$ of $\E^{p,q}(X,V)^*$:
$$\la f*\phi,\alpha \ra=\la \phi,\alpha_f \ra, \text{ where } \la \alpha_f,\theta\ra = \la \alpha,f*\theta\ra, \text { for all } \theta \in  \E^{0,q}(X,V).$$
In other words the operator $\phi\mapsto f*\phi$ on $\EW^{0,q}(X,V)$ is the double dual of the operator $\theta\mapsto f*\theta$ on $\E^{0,q}(X,V)$.

We have $|\la \alpha_f,\theta\ra| \leq \norm{\alpha}_R\norm{f*\theta}_R \leq\norm{\alpha}_R\norm{f}_R\norm{\theta}$ for some $R$ (depending on $\alpha$). Hence for each $\alpha$ there exists $R$ such that
$$|\la f*\phi,\alpha\ra|\leq \norm{\phi}_R\norm{\alpha}_R\norm{f}_R$$
so $f*\phi$ is a continuous linear functional.

We now want to further extend the convolution to define $\eta*\phi$ in $\EW^{p,q}(X,V)$, for $\eta\in\EW^{p,-1}(X,\one(X))$ and $\phi\in \EW^{0,q}(X,V)$. The definition is motivated by the requirement that $(I_Wf)*\phi =  f*\phi$. Hence for $\alpha$ in $\E^{p,q}(X,V)^*$ we will require
$$\la (I_Wf)*\phi,\alpha \ra = \la f*\phi,\alpha\ra.$$
For $\phi\in \EW^{0,q}(X,V)$,  $\alpha \in \E^{p,q}(X,V)^*$, define $\sigma_{\phi,\alpha} \in \E^{p,-1}(X,\one(X))^*$ by 
$$\la \sigma_{\phi,\alpha}, f\ra = \la f*\phi,\alpha\ra=\la \phi,\alpha_f\ra.$$
The above inequalities ensure that $\sigma_{\phi,\alpha}$ is a continuous linear functional.

We observe that $f*\phi$ is determined by the property that $\la f*\phi,\alpha\ra=\la \sigma_{\phi,\alpha},f\ra=\la I_Wf, \sigma_{\phi,\alpha}\ra$. We use this to give the general definition: For $\eta\in\EW^{p,-1}(X,\one(X))$ and $\phi\in \EW^{0,q}(X,V)$, we define $\eta*\phi$ in $\EW^{p,q}(X,V)$ by
$$\la \eta*\phi, \alpha\ra = \la \eta, \sigma_{\phi,\alpha}\ra$$
for all $\alpha$ in $\E^{p,q}(X,V)^*$.

\begin{lemma}\label{identities}
For $\eta\in \EW^{p,-1}(X,\one(X))$ and $\phi\in \EW^{0,q}\XV$ we have $D(\eta*\phi)=(D\eta)*\phi$ and $d(\eta*\phi)=\eta*d\phi$.
\end{lemma}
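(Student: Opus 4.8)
The plan is to prove both identities by reducing them to the corresponding identities at the level of the uncompleted complex $\E$, exploiting the fact that $D$ and $d$ on $\EW$ are by definition the double duals of $D,d$ on $\E$, and that $\eta * \phi$ is built up through a sequence of dualisations. Since everything in sight is defined via pairings against $\alpha\in\E^{p+1,q}(X,V)^*$ (respectively $\E^{p,q+1}(X,V)^*$), I would simply unwind the definitions and check the identities test-functional by test-functional. The main point to keep straight is that $D$ and $d$ on the weak-* completion are adjoints of the $\E$-level differentials, so for example $\la D(\eta*\phi),\alpha\ra = \la \eta*\phi, D\alpha\ra$ where on the right $D$ denotes the $\E$-level coboundary $\E^{p,q}(X,V)^*\to$ ... — one has to be careful about which space $\alpha$ lives in at each stage, but this is bookkeeping rather than substance.

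Concretely, for the identity $d(\eta*\phi) = \eta * d\phi$: take $\alpha\in\E^{p,q+1}(X,V)^*$. Then $\la d(\eta*\phi),\alpha\ra = \la \eta*\phi, d^*\alpha\ra = \la\eta,\sigma_{\phi,d^*\alpha}\ra$ by the definition of $\eta*\phi$, where $d^*:\E^{p,q+1}(X,V)^*\to\E^{p,q}(X,V)^*$ is the transpose of the $\E$-level differential $d:\E^{p,q}\to\E^{p,q+1}$. On the other hand $\la \eta*d\phi,\alpha\ra = \la\eta,\sigma_{d\phi,\alpha}\ra$. So it suffices to show $\sigma_{\phi,d^*\alpha} = \sigma_{d\phi,\alpha}$ as elements of $\E^{p,-1}(X,\one(X))^*$, i.e. that $\la f * \phi, d^*\alpha\ra = \la f * d\phi, \alpha\ra$ for all $f\in\E^{p,-1}(X,\one(X))$. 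Unwinding once more, the left side is $\la \phi, (d^*\alpha)_f\ra$ and the right side is $\la d\phi, \alpha_f\ra = \la \phi, d^*(\alpha_f)\ra$ (using that $\phi\in\EW^{0,q}$ is paired against $\E^{0,q+1}(X,V)^*$ and that $d$ on $\EW^{0,q}$ is the double dual of $d$ on $\E^{0,q}$). So the whole identity collapses to the single \emph{concrete} fact that $(d^*\alpha)_f = d^*(\alpha_f)$ as functionals on $\E^{0,q}(X,V)$, which in turn follows from the already-established $\E$-level identity $d(f*\theta) = f*d\theta$ for $\theta\in\E^{0,q}(X,V)$: indeed $\la (d^*\alpha)_f,\theta\ra = \la d^*\alpha, f*\theta\ra = \la\alpha, d(f*\theta)\ra = \la\alpha, f*d\theta\ra = \la\alpha_f, d\theta\ra = \la d^*(\alpha_f),\theta\ra$.

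For the identity $D(\eta*\phi) = (D\eta)*\phi$: take $\alpha\in\E^{p+1,q}(X,V)^*$. Then $\la D(\eta*\phi),\alpha\ra = \la\eta*\phi, D^*\alpha\ra = \la\eta, \sigma_{\phi,D^*\alpha}\ra$, where now $D^*:\E^{p+1,q}(X,V)^*\to\E^{p,q}(X,V)^*$. On the other side $\la (D\eta)*\phi,\alpha\ra = \la D\eta, \sigma_{\phi,\alpha}\ra = \la\eta, D^{**}\sigma_{\phi,\alpha}\ra$, where $D^{**}:\E^{p,-1}(X,\one(X))^*\to\E^{p+1,-1}(X,\one(X))^*$ is the transpose of the $\E$-level differential $D:\E^{p,-1}\to\E^{p+1,-1}$ (recall $D$ on $\EW^{p,-1}$ is defined as the double dual, so its predual on duals is this $D^{**}$; the $*$-count differs from the $d$ case because $D$ raises the first index). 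Thus the identity reduces to showing $\sigma_{\phi,D^*\alpha} = D^{**}\sigma_{\phi,\alpha}$ in $\E^{p+1,-1}(X,\one(X))^*$, i.e. that $\la f * \phi, D^*\alpha\ra = \la Df, \sigma_{\phi,\alpha}\ra = \la (Df)*\phi, \alpha\ra$ for all $f\in\E^{p+1,-1}(X,\one(X))$ — and this is precisely the $\E$-level identity $D(f*\phi) = (Df)*\phi$, which is recorded in the excerpt (proved by interchanging the order of summation), extended by continuity/duality to $\phi\in\EW^{0,q}$.

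The two computations are structurally the same; the only genuine content is the pair of $\E$-level convolution identities $D(f*\theta)=(Df)*\theta$ and $d(f*\theta)=f*d\theta$, which are already available. I expect the one thing requiring care — the ``main obstacle,'' though it is really just attention to detail — is tracking the dualisation levels: $\eta*\phi$ is defined via $\sigma_{\phi,\alpha}$ which is itself defined via $\alpha_f$, so establishing each identity means peeling off three layers of adjunction and verifying that the $\E$-level identity survives each peel. One should also note in passing that all the functionals appearing ($\alpha_f$, $\sigma_{\phi,\alpha}$, $D^*\alpha$, $d^*\alpha$) are continuous — this is exactly the content of the norm estimates displayed just before the lemma together with continuity of $D,d$ on $\E$ — so every pairing written above is legitimate. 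With that in hand the lemma follows.
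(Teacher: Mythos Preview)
Your approach is essentially identical to the paper's: both reduce the identities to $\sigma_{\phi,D^*\alpha}=D^*\sigma_{\phi,\alpha}$ and $\sigma_{\phi,d^*\beta}=\sigma_{d\phi,\beta}$, and then verify these by unwinding one more layer to the $\E$-level identities $(D^*\alpha)_f=\alpha_{Df}$ and $(d^*\beta)_f=d^*(\beta_f)$, which follow from $D(f*\theta)=(Df)*\theta$ and $d(f*\theta)=f*d\theta$. There are a couple of index slips in your $D$ paragraph --- the transpose of $D:\E^{p,-1}\to\E^{p+1,-1}$ goes $\E^{p+1,-1}^*\to\E^{p,-1}^*$ (so write $D^*$, not $D^{**}$, and the reduced identity lives in $\E^{p,-1}^*$), and the test element $f$ should lie in $\E^{p,-1}$ rather than $\E^{p+1,-1}$ --- but once those are corrected the argument matches the paper line for line.
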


\begin{proof}
The elements $D(\eta*\phi),d(\eta*\phi)$ are defined by their pairings with respectively $\alpha$ in $\E^{p+1,q}\XV^*$ and $\beta$ in $\E^{p,q+1}\XV^*$. These are given by pairing $\eta$ with respectively $\sigma_{\phi,D^*\alpha}$ and $\sigma_{\phi,d^*\beta}$.

Since for $f\in \E^{p,-1}(X,\one(X))$ we have $\la \sigma_{\phi,D^*\alpha},f\ra=\la \phi, (D^*\alpha)_f\ra$ and $\la \sigma_{\phi,d^*\beta},f\ra=\la \phi, (d^*\beta)_f\ra$, we must determine $(D^*\alpha)_f$ and $(d^*\beta)_f$. Pairing these with an element $\theta$ in $\E^{0,q}\XV$ we have
$$\la (D^*\alpha)_f,\theta\ra = \la\alpha,D(f*\theta)\ra = \la\alpha,(Df)*\theta)\ra, \text{ and } \la (d^*\beta)_f,\theta\ra = \la\beta,d(f*\theta)\ra = \la\beta,f*d\theta\ra.$$
Hence $(D^*\alpha)_f=\alpha_{Df}$ and $(d^*\beta)_f=d^*(\beta_f)$, so we have $\sigma_{\phi,D^*\alpha}=D^*\sigma_{\phi,\alpha}$ and $\sigma_{\phi,d^*\beta}=\sigma_{d\phi,\beta}$. It follows that $D(\eta*\phi)=(D\eta)*\phi$ and $d(\eta*\phi)=\eta*d\phi$ as required.
\end{proof}



Before proceeding with the proof of the vanishing theorem we first establish the following result.

\begin{lemma}\label{TrivialPairing}
If $\eta \in \EW^{0,-1}(X,\ell^1(X))$ is in the image of $\EW^{0,-1}(X,\ell^1_0(X))$, and $\phi\in \EW^{0, q}(X, V)$ with $D\phi=0$ then $\eta*\phi=0$. 
\end{lemma}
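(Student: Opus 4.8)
The plan is to unwind the definition of the convolution $\eta*\phi$ and reduce the statement to a fact about genuine cochains. Write $\eta=\iota_W\eta_0$ with $\eta_0\in\EW^{0,-1}(X,\ell^1_0(X))$, where $\iota_W=(\iota_*)^{**}$ is the weak-* extension of the chain-level inclusion $\iota_*\colon\E^{0,-1}(X,\ell^1_0(X))\to\E^{0,-1}(X,\ell^1(X))$. For any $\alpha\in\E^{0,q}(X,V)^*$, the definition of $\eta*\phi$ together with that of the double adjoint give
\[
\la\eta*\phi,\alpha\ra=\la\eta,\sigma_{\phi,\alpha}\ra=\la(\iota_*)^{**}\eta_0,\sigma_{\phi,\alpha}\ra=\la\eta_0,(\iota_*)^*\sigma_{\phi,\alpha}\ra .
\]
Since $\la(\iota_*)^*\sigma_{\phi,\alpha},f\ra=\la\sigma_{\phi,\alpha},\iota_*f\ra=\la(\iota_*f)*\phi,\alpha\ra=\la\phi,\alpha_{\iota_*f}\ra$ for every $f\in\E^{0,-1}(X,\ell^1_0(X))$, it will be enough to show that $\la\phi,\alpha_{\iota_*f}\ra=0$ for all such $f$; then $(\iota_*)^*\sigma_{\phi,\alpha}=0$, hence $\la\eta*\phi,\alpha\ra=0$ for every $\alpha$, i.e.\ $\eta*\phi=0$.

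The crux, which I would record as a short lemma, is that convolution by a cochain valued in $\ell^1_0(X)$ factors through the vertical differential. Fix $f\in\E^{0,-1}(X,\ell^1_0(X))$ and choose $S$ with $\supp(f(x))\subseteq B_S(x)$ for all $x$ (controlled supports); note $\sup_x\|f(x)\|_{\ell^1}<\infty$. The formula
\[
(R_f\chi)(x,\bfy)=\sum_{z}f(x)(z)\,\chi\bigl((x,z),\bfy\bigr)
\]
defines a continuous linear map $R_f\colon\E^{1,q}(X,V)\to\E^{0,q}(X,V)$: equivariance is immediate, the bound $\|R_f\chi\|_R\le(\sup_x\|f(x)\|_{\ell^1})\,\|\chi\|_S$ holds since only $z\in B_S(x)$ contribute, and controlled supports of $R_f\chi$ follow from those of $\chi$ after enlarging the control radius by a bounded amount, exactly as in the estimate for $*$. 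Because $\sum_z f(x)(z)=0$, subtracting the vanishing term $\theta(x,\bfy)\sum_z f(x)(z)$ yields, for all $\theta\in\E^{0,q}(X,V)$,
\[
(\iota_*f*\theta)(x,\bfy)=\sum_z f(x)(z)\bigl(\theta(z,\bfy)-\theta(x,\bfy)\bigr)=\sum_z f(x)(z)\,D\theta\bigl((x,z),\bfy\bigr)=(R_f\circ D)(\theta)(x,\bfy),
\]
so $\iota_*f*(-)=R_f\circ D$ as operators on $\E^{0,q}(X,V)$.

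Taking Banach-space adjoints, $\alpha_{\iota_*f}=(\iota_*f*(-))^*\alpha=D^*\bigl(R_f^*\alpha\bigr)$, so $\alpha_{\iota_*f}$ lies in the image of $D^*\colon\E^{1,q}(X,V)^*\to\E^{0,q}(X,V)^*$. Now $D\phi=0$ in $\EW^{1,q}(X,V)$ says exactly that $\la\phi,D^*\beta\ra=0$ for every $\beta\in\E^{1,q}(X,V)^*$, so taking $\beta=R_f^*\alpha$ gives $\la\phi,\alpha_{\iota_*f}\ra=0$, which is what was needed. I expect the only genuine work to be the routine but slightly fiddly verification that $R_f$ really lands in $\E^{0,q}(X,V)$ continuously — in particular the controlled-support bookkeeping, where $d(x,z)\le S$ is used to bound the enlarged control radius — together with keeping the various (double) duals $\iota_*,(\iota_*)^*,\iota_W=(\iota_*)^{**}$ and $D,D^*$ straight; everything else is a formal consequence of the zero-sum condition on $f$ and of $D\phi=0$.
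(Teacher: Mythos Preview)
Your proof is correct and takes essentially the same approach as the paper: reduce to showing that $\alpha_f$ lies in the image of $D^*$ whenever $f$ is valued in $\ell^1_0$, by exhibiting a factorisation $f*(-)=R_f\circ D$, and then pair with $D\phi=0$. The paper reaches the analogous factorisation $f*(-)=T_F\circ D$ via a short detour through a boundary map $\partial:\ell^1(X\times X)\to\ell^1_0(X)$ and an explicit lift $F$ of $f$, but with the lift chosen there $T_F$ coincides with your $R_f$ on the image of $D$, so the two arguments are the same in substance; your direct use of $\sum_z f(x)(z)=0$ to insert $\theta(x,\bfy)$ is in fact slightly cleaner.
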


\begin{proof}
The statement that $\eta*\phi=0$, amounts to the assertion that $\la \eta, \sigma_{\phi,\alpha}\ra=0$ for all $\alpha$ in $\E^{0,q}(X,V)^*$. Since the image of $I_W$ is dense in $\EW^{0,-1}(X,\ell^1_0(X))$ in the \weakstar topology, it suffices to show that $\langle \sigma_{\phi,\alpha}, f\rangle = 0$ for all $f\in \E^{0,-1}(X,\ell^1_0(X))$. We note that
$$\langle \sigma_{\phi,\alpha}, f\rangle = \la f*\phi,\alpha\ra = \la \phi, \alpha_f \ra.$$
We will show that $\alpha_f$ is a `boundary,' that is $\alpha_f$ is in the range of $D^*$. As $D\phi=0$ it will follow that the pairing is trivial.

We define a boundary map $\partial:\one(X\times X)\to \one_0(X)$ by $(\partial H)(z_0)=\sum\limits_{z\in X} H(z_1,z_0)-H(z_0,z_1)$. Equivalently, we can write $\partial F=\sum\limits_{z_0,z_1\in X} H(z_0,z_1)(\delta_{z_1}-\delta_{z_0})$.

We note that $\partial$ is surjective: For $h\in \one_0(X)$ and $x$ in $X$, let $H(z_0,z_1)=h(z_1)$ if $z_0=x,z_1\neq x$ and let $H(z_0,z_1)=0$ otherwise. Then $\partial H=h$. We note that $\norm{H}\leq \norm{h}$, and $\supp(H)\subseteq \{x\}\times \supp(h)$. For each $x$, let $F(x)$ be the lift of $f(x)$ constructed in this way, so that $\norm{F(x)}\leq \norm{f(x)}$ for all $x$, and as $f$ is of controlled supports there exists $R$ such that if $F(x)(z_0,z_1)\neq 0$ then $z_0,z_1\in B_R(x)$.



Writing $(\partial F)(x)=\partial(F(x))$, for $\theta\in \E^{0, q}(X, V)$, we have
$$\la \alpha_f,\theta \ra = \la \alpha, f*\theta\ra = \la \alpha, (\partial F)*\theta\ra.$$

Now compute $(\partial F)*\theta$. We have
\begin{align*}
((\partial F)*\theta)(x,\bfy)&=\sum_z \partial F(x)(z)\theta(z,\bfy)
=\sum_{z,z_0,z_1} F(x)(z_0,z_1)(\delta_{z_1}(z)-\delta_{z_0}(z))\theta(z,\bfy)\\
&=\sum_{z_0,z_1}F(x)(z_0,z_1)D\theta((z_0,z_1),\bfy)
\end{align*}

We define $T_F:\E^{1, q}(X, V)\to \E^{0, q}(X, V)$ by $(T_F\zeta)(x,\mathbf y)= \sum\limits_{z_0,z_1}F(x)(z_0,z_1)\zeta((z_0,z_1),\bfy)$. As $F(x)(z_0,z_1)\neq 0$ implies $z_0,z_1$ lie in the ball $B_R(x)$, we have the estimate
$$
\|T_F\zeta\|\leq \sup\limits_{x\in X, \bfy\in X^{q+1}}\sum\limits_{z_0,z_1\in X}|F(x)(z_0,z_1)|\|\zeta((z_0,z_1),\bfy)\|_V\leq \sup_{x\in X}\norm{F(x)}\norm{\zeta}_R\leq \norm{f}\norm{\zeta}_R.
$$
hence $T_F$ is continuous.

We conclude that
$$\la \alpha_f,\theta\ra = \la \alpha, (\partial F)*\theta\ra = \la \alpha, T_FD\theta\ra=\la D^*T_F^*\alpha,\theta\ra$$
for all $\theta$, hence $\alpha_f=D^*T_F^*\alpha$, so that
$$\la \phi, \alpha_f\ra=\la \phi, D^*T_F^*\alpha\ra=\la D\phi, T_F^*\alpha\ra=0.$$
This completes the proof.
\end{proof}
%
%
%

We now prove the vanishing theorem.

\begin{thm}\label{TrivW} If $X$ is a metric space satisfying Yu's property A, then for every $q\geq 1$, $\HWA^q(X, \mathcal V)=0$ for every $X$-module $\mathcal V$.

Specifically, if $\mu$ is an asymptotically invariant mean then $s_\mu\phi=\mu*s\phi$ defines a splitting of the asymptotically invariant complex.
\end{thm}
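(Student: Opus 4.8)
The plan is to follow the proof of Theorem~\ref{Triv}, replacing the element $f$ there by an asymptotically invariant mean $\mu$, and replacing the $\ell^1$-norm estimate used there by the exact vanishing statement of Lemma~\ref{TrivialPairing}. First, since $X$ has property~A, Theorem~\ref{D[1]=0} furnishes an asymptotically invariant mean $\mu\in\EW^{0,-1}(X,\one(X))$ with $D\mu=0$ and $\pi_*(\mu)=\mathbf 1_W$; as $\pi_*(\delta)=1$ for $\delta\colon x\mapsto\delta_x$, exactness lets us write $\mu=\delta-\eta$ with $\eta\in\EW^{0,-1}(X,\one(X))$ in the image of $\EW^{0,-1}(X,\ell^1_0(X))$. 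I would then define $s_\mu\phi=\mu*s\phi$, using the convolution $\EW^{0,-1}(X,\one(X))\times\EW^{0,q-1}\XV\to\EW^{0,q-1}\XV$ built before Lemma~\ref{identities}, and check that it maps $\EW^{0,q}\XV$ into the asymptotically invariant subcomplex: by Lemma~\ref{identities} and $D\mu=0$ we get $Ds_\mu\phi=D(\mu*s\phi)=(D\mu)*s\phi=0$, so $s_\mu$ restricts to $s_\mu\colon\EWA^q\XV\to\EWA^{q-1}\XV$.

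The core of the argument is the homotopy identity $(ds_\mu+s_\mu d)\phi=\phi$ for $\phi\in\EWA^q\XV$. Applying Lemma~\ref{identities}, $ds_\mu\phi=d(\mu*s\phi)=\mu*ds\phi$ and $s_\mu d\phi=\mu*sd\phi$, so $(ds_\mu+s_\mu d)\phi=\mu*(ds+sd)\phi=\mu*\phi$ by Proposition~\ref{acyclicrows} (extended to the weak-* completion). Everything thus reduces to showing $\mu*\phi=\phi$. Here I would split $\mu*\phi=\delta*\phi-\eta*\phi$: the operator $\theta\mapsto\delta*\theta$ on $\E^{0,q}\XV$ is the identity, since $(\delta*\theta)(\bfx,\bfy)=\sum_z\delta_x(z)\theta(z,\bfy)=\theta(\bfx,\bfy)$, hence its double dual $\phi\mapsto\delta*\phi$ on $\EW^{0,q}\XV$ is the identity and $\delta*\phi=\phi$; and $\eta*\phi=0$ by Lemma~\ref{TrivialPairing}, because $\eta$ comes from $\ell^1_0(X)$ and $D\phi=0$ as $\phi$ is asymptotically invariant. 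Therefore $\mu*\phi=\phi$ and $(ds_\mu+s_\mu d)\phi=\phi$. Consequently, if $\phi\in\EWA^q\XV$ is a cocycle with $q\geq 1$, then $d\phi=0$ forces $\phi=ds_\mu\phi$, a coboundary in $\EWA^*\XV$, so $\HWA^q(X,\mathcal V)=0$ for all $q\geq 1$, and combining with Theorem~\ref{PropA} yields the equivalence with property~A.

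I do not expect a serious obstacle: the two substantive inputs, Lemma~\ref{identities} (that convolution intertwines $D$ and $d$ correctly) and Lemma~\ref{TrivialPairing} (that convolving with a coefficient from $\ell^1_0(X)$ annihilates $D$-closed elements), are already in place, and the remaining steps are bookkeeping about double duals and the decomposition $\mu=\delta-\eta$. The one point worth highlighting is a structural feature absent from the $\EQ$ case: because Lemma~\ref{TrivialPairing} is an exact identity rather than an asymptotic estimate, the mean $\mu$, and hence the operator $s_\mu$, may be chosen once and for all, independently of the cocycle being killed, so $s_\mu$ is a genuine contracting homotopy of the complex $\EWA^*\XV$ in positive degrees rather than a cocycle-by-cocycle construction.
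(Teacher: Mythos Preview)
Your proposal is correct and follows essentially the same approach as the paper: define $s_\mu\phi=\mu*s\phi$, use Lemma~\ref{identities} to see it lands in the asymptotically invariant complex and that $(ds_\mu+s_\mu d)\phi=\mu*\phi$, then split $\mu=\delta-\eta$ and invoke Lemma~\ref{TrivialPairing} for $\eta*\phi=0$ together with the observation that $\delta*\,\cdot\,$ is the identity. Your closing remark, that $s_\mu$ is a genuine contracting homotopy chosen once for all (in contrast to the $\phi$-dependent $s_f$ in the $\EQ$ argument), is also exactly the point.
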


\begin{proof}
By Theorem \ref{D[1]=0}, property A guarantees the existence of an asymptotically invariant mean $\mu$, that is an element $\mu$ in $\EWA^{0,-1}$ such that $\pi_*(\mu)=0$.

We define $s_\mu:\EW^{0,q}\XV\to\EW^{0,q-1}\XV$ by $s_\mu\phi=\mu*s\phi$. By Lemma \ref{identities} we have $Ds_\mu\phi=D(\mu*s\phi)=(D\mu)*s\phi$. Since $\mu$ is asymptotically invariant $D\mu=0$, so $s_\mu\phi$ is also asymptotically invariant. Hence $s_\mu$ restricts to a map $s_\mu:\EWA^{0,q}\XV\to\EWA^{0,q-1}\XV$.
We must now verify that $s_\mu$ is a splitting. By Lemma \ref{identities}, and using the fact that $ds+sd=1$ we have
$$(ds_\mu+s_\mu d)\phi = d(\mu*s\phi)+\mu*sd\phi=\mu*ds\phi+\mu*sd\phi=\mu*\phi.$$
It thus remains to show that $\mu*\phi=\phi$.

Let $\delta$ denote the map $X\to \one(x), x\mapsto \delta_x$. We have $\pi_*(I_W \delta)=1=\pi_*(\mu)$, so for $\eta=\delta-\mu$ we have $\pi_*(\eta)=0$. Hence $\eta$ is in the image of $\EW^{0,-1}(X,\ell^1_0(X))$. As $D\phi=0$, it follows from Lemma \ref{TrivialPairing} that $\eta*\phi=0$. Thus $\mu*\phi=(I_W\delta)*\phi=\delta*\phi$. It is easy to see that convolution with $\delta$ yields the identity map on $\E^{0,q}\XV$, hence its double dual is again the identity map. Thus $\mu*\phi=\delta*\phi=\phi$ as required.

This completes the proof.
\end{proof}

Combining this with Theorem \ref{PropA} we obtain the following.

\begin{thm}\label{MainTheorem3}
Let $X$ be a discrete metric space. Then the following are equivalent:
\begin{enumerate}
\item $\HWA^q(X, \mathcal V)= 0$ for all $q\geq 1$ and all modules $\mathcal V$ over $X$.
\item $[\J_W^{0,1}]=0$ in $\HWA^1(X, \mathcal \ell^1_0(X))$.
\item $X$ has property $A$.
\end{enumerate}
\end{thm}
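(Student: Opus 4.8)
The plan is to prove the cyclic chain of implications $(3)\Rightarrow(1)\Rightarrow(2)\Rightarrow(3)$, assembling it from two results already in hand: the vanishing theorem for the weak-* asymptotically invariant complex (Theorem \ref{TrivW}) and the identification of the Johnson class $[\J_W^{0,1}]$ with the connecting image $D[\mathbf 1_W]$ (Theorem \ref{PropA}).

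For $(3)\Rightarrow(1)$ I would simply invoke Theorem \ref{TrivW}. Assuming property A, Theorem \ref{D[1]=0} supplies an asymptotically invariant mean $\mu\in\EWA^{0,-1}$ with $\pi_*(\mu)=0$; then $s_\mu\phi=\mu*s\phi$ restricts to the asymptotically invariant complex, since $D\mu=0$ forces $Ds_\mu\phi=(D\mu)*s\phi=0$, and it is a contracting homotopy because $(ds_\mu+s_\mu d)\phi=\mu*\phi$ while $\mu*\phi=\phi$ whenever $D\phi=0$. The last equality comes from $\delta*\phi=\phi$ together with Lemma \ref{TrivialPairing} applied to $\eta=\delta-\mu$, which lies in the image of $\EW^{0,-1}(X,\ell^1_0(X))$ since $\pi_*(\eta)=0$. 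This yields $\HWA^q(X,\mathcal V)=0$ for every $q\geq 1$ and every $X$-module $\mathcal V$.

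The implication $(1)\Rightarrow(2)$ is then an immediate specialisation: take $q=1$ and $\mathcal V=\ell^1_0(X)$, so that $\HWA^1(X,\ell^1_0(X))=0$ and in particular the class $[\J_W^{0,1}]$, which is well-defined because $D\J^{0,1}=0$ and $d\J^{0,1}=0$, vanishes. For $(2)\Rightarrow(3)$ I would quote Theorem \ref{PropA}: the augmentation map $\HWA^1(X,\ell^1_0(X))\to\HW^1(X,\ell^1_0(X))$ is injective by Lemma \ref{injective-augmentation} and carries $[\J_W^{0,1}]$ to $D[\mathbf 1_W]$, so $[\J_W^{0,1}]=0$ if and only if $D[\mathbf 1_W]=0$ in $\HW^1(X,\ell^1_0(X))$; by Theorem \ref{D[1]=0} the latter is equivalent to property A, closing the cycle.

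I do not expect a genuine obstacle at this stage, since the substantive work — extending the convolution $*$ to the weak-* completions, checking the identities $D(\eta*\phi)=(D\eta)*\phi$ and $d(\eta*\phi)=\eta*d\phi$, and proving the triviality Lemma \ref{TrivialPairing} — has already been carried out in the preceding section. The only points needing care in the write-up are to state the two specialisations cleanly (Theorem \ref{TrivW} at $q=1$, $\mathcal V=\ell^1_0(X)$, and Theorem \ref{PropA}) and to confirm that the three implications genuinely close into a cycle, so that all three conditions are equivalent.
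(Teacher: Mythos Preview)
Your proposal is correct and follows exactly the paper's approach: the paper's proof of Theorem~\ref{MainTheorem3} is simply ``Combining this with Theorem~\ref{PropA} we obtain the following,'' i.e.\ the same cycle $(3)\Rightarrow(1)$ via Theorem~\ref{TrivW}, $(1)\Rightarrow(2)$ by specialisation, and $(2)\Rightarrow(3)$ via Theorem~\ref{PropA}. One small slip: you write $\pi_*(\mu)=0$, but the defining condition for an asymptotically invariant mean is $\pi_*(\mu)=\mathbf 1_W$; this is what makes $\pi_*(\eta)=\pi_*(\delta)-\pi_*(\mu)=0$ for $\eta=\delta-\mu$, as you correctly use later.
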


\end{section}

\skipit{

\section{Amenability}
\note{EDIT THIS TEXT-By invoking equivariance throughout, in definitions and assertions, where this makes sense, we obtain a variation on Johnson's theorem as follows, concerning the characterisation of amenability in terms of the vanishing of bounded cohomology. Where Johnson's theorem proceeds in large part by demonstrating that vanishing of a certain cocycle yields the existence of an invariant mean on the group, vanishing in our theory yields a F\o lner sequence.}

Now recall that given a group $G$ acting by isometries on a metric space $X$ an $X$-module $\mathcal V=(V, \|\cdot\|, \supp)$ is said to be $G$-equivariant if it is equipped with an isometric action of $G$ by bounded linear maps, such that $\supp(gv)=g\supp(v)$ for all $g\in G$ and all $v\in V$. The action of $G$ extends in the usual way to a diagonal action on  the cochain spaces in the bicomplex $E^{p,q}(X, \mathcal V)$ defined by $(g\cdot\phi)(\mathbf x, \mathbf y, n)=g(\phi(g^{-1}(\mathbf x), g^{-1}(\mathbf y), n))$. The action commutes with the differentials and preserves the subcomplex $E^{p,q}_0(X, \mathcal V)$. The $G$-invariant part, which comprises the $G$-equivariant cochains is preserved by the differentials in both bicomplexes. We now take the quotient complex in which we factor out the $G$-invariant asymptotically vanishing cochains in the $G$-invariant cochains. As in the non-equivariant case the rows of the bicomplex are acyclic since the splitting given in that argument is $G$-equivariant. The spaces ${E^{p,-1}(X, \mathcal V)^G\over E_0^{p,-1}(X, \mathcal V)^G}$ are the augmentation of this complex and equipping these with the restriction of $D$ they form a cochain complex whose cohomology is isomorphic to the cohomology of the totalisation.

As in the non-equivariant case we also consider the augmentation of the columns given by the kernels of 

\[
D:{E^{0,q}(X, \mathcal V)^G\over E_0^{0,q}(X, \mathcal V)^G}\rightarrow {E^{1,q}(X, \mathcal V)^G\over E_0^{1,q}(X, \mathcal V)^G}.
\]

The horizontal differential  $d$ restricts to these kernels hence we obtain a new cochain complex. The  cohomology groups are denoted $H^q_B(X, V)$.

\begin{remark}
If $X$ is equipped with two coarsely equivalent metrics and $G$ acts by isometries on both then for any $G$-equivariant module $\mathcal V$ over $X$  the equivariant bicomplexes we obtain from each metric are identical. This follows from the corresponding remark in the non-equivariant case.  In particular  taking $X=G$  to b a countable discrete group acting on itself by left multiplication and equipped with a proper left invariant metric  we observe that invoking uniqueness of the metric up to coarse equivalence that the cohomology does not depend on the metric and we denote it by $H^q_B(G, V)$.
\end{remark}

\begin{thm}\label{MainTheorem2}
Let $G$ be a discrete  group. The following are equivalent:
\begin{enumerate}
\item $H^q_B(G, V)= 0$ for all $q\geq 1$ and all $G$-equivariant modules $V$ over $G$.
\item  $H^1_B(G, \ell^1_0G)= 0$.
\item The class $[J]=[\delta_{y_1}-\delta_{y_0}]\in H^1_B(G, \ell^1_0G)$ is $\text{zero}$.
\item $G$ is amenable.
\end{enumerate}
\end{thm}

\begin{proof}
$(1)\implies (2)\implies( 3)$ is trivial.

$(3)\implies (4)$: As in  the proof of Theorem ?? part ??? the cohomology $H^1_B(G, \ell^1_0(G))$ maps to the cohomology of the totalisation of the bicomplex, which we identify with the cohomology of the complex $\left ({E^{p,-1}(G, \ell^1_0(G))^G\over E^{p,-1}_0(G, \ell^1_0(G))^G}, D\right)$

The image of $[J]$ is cohomologous to $D[\mathbf 1]$. The latter vanishes if and only if $\mathbf 1$ is  the image of a cocycle in ${E^{0,-1}(G, \ell^1(G))^G\over E^{0,-1}_0(G, \ell^1(G))^G}$. Such an element is represented by a $G$-equivariant function $\phi:G\times \naturals\rightarrow \ell^1(G)$ such that for any $R\geq 0$ the norm $\|\phi(g,n)-\phi(g',n)\|_{\ell^1}\to 0$ as $n\to \infty$ uniformly on those $g,g'$ with $d(g,g')\leq R$. Note that for all $g\in G$ we have $\|g\phi(e,n)-\phi(e,n)\|_{\ell^1}=\|\phi(g,n)-\phi(e,n)\|_{\ell^1}$ which tends to $0$ as $n\to \infty$ so setting $f^n:={|\phi(e,n)|\over\|\phi(e,n)\|_{\ell^1}}$ we obtain a Reiter sequence for the group.

$(4)\implies(1)$. We follow the same argument as in Theorem \ref{Triv}. Given a Reiter sequence $f^n$, let $f^n_g=gf^n$. Using this equivariant family to construct a splitting of the differential $d$ as in the proof of Theorem \ref{Triv}, the splitting we produce is now equivariant. Hence  $H^q_B(G, V)$ vanishes for all $q\geq 1$.
\end{proof}

}

\skipit{

\appendix
\section{The Brooks cocycle}
Let $F_2$ be the free group of rank $2$ generated by elements $a$ and $b$ and choose a cyclically reduced word $\omega$ in this basis. Note that $\omega^{-1}$ is also a cyclically reduced word. Given an element $g\in F_2$ we write it uniquely as a reduced word in $a, b$ and define $|g|_\omega$ to be the number of times the reduced word $\omega$ appears in the reduced word representation of $g$ minus the number of times $\omega^{-1}$ appears. This function is unbounded since $|\omega^n|_\omega\geq n$. (It is not actually necessary to choose a cyclically reduced word in this construction, however it does simplify this part of the argument.) This function defines a $1$-cochain $\phi_\omega(g,h)=|g^{-1}h|_\omega$ and we take its coboundary 

\[
d\phi_\omega(g,h,k)=|h^{-1}k|_\omega-|g^{-1}k|_\omega+|g^{-1}h|_\omega.
\] 

While the function $|\cdot|_\omega$ is unbounded $d\phi_\omega$ is bounded. Since $d\phi_\omega$ is obtained as the image of a cochain it is a coboundary, and therefore necessarily a cocycle, . Since it is bounded it represents an element of the second bounded cohomology of $F_2$ Brooks demonstrated that such elements are non-trivial by regarding $|\cdot |_\omega$  as a quasi-homomorphism and arguing that there are no non-trivial bounded quasi-homomorphisms from $F_2$ to the reals. Here we show explicitly that the cocycle $d\phi_{ab}$  is non-trivial by pairing it with an element of the homology group $H_2^{\ell^1}(F_2, \mathbb C)$ by analogy with our argument for non-triviality of the Johnson element for this group.

The first step is to construct the required cycle. First we define $\ell^1$ chains where for $g,x,y,z\in F_2$, $g(x,y,z)=(gx,gy,gz)$:

$c_{ab}=\sum\limits_{g\in G} g(e,a,ab)+\frac12 g(e, ab, (ab)^2)+\frac14 g(e,
(ab)^2, (ab)^4) + \frac18  g(e, (ab)^4, (ab)^8) + \ldots$

$c_{a}=\sum\limits_{g\in G} g(e,a,a^2)+\frac12 g(e, a^2, a^4)+\frac14 g(e,
a^4, a^8) + \frac18  g(e, a^8, a^16) + \ldots$

$c_{b}=\sum\limits_{g\in G} g(e,b,b^2)+\frac12 g(e, b^2, b^4)+\frac14 g(e,
b^4, b^8) + \frac18  g(e, b^8, b^16) + \ldots$

$c_{ba}=\sum\limits_{g\in G} g(e,b,ba)+\frac12 g(e, ba, (ba)^2)+\frac14 g(e,(ba)^2, (ba)^4) +
\frac18  g(e, (ba)^4, (ba)^8) + \ldots$

Next we take the element  $C_{ab}=c_{ab}+c_{ba}-(c_a+c_b)$. We claim that this defines an element of $H_2^{\ell^1}(F_2, \mathbb C)$. To see this compute the boundary of $c_{ab}+c_{ba}$, and  $c_a + c_b$ and notice that in each case we get the constant function $\mathbf 2$ on the edges, so the difference is a cocycle as required. (We could halve the coefficients of $c_{ab}+c_{ba}-(c_a+c_b)$ so that the boundaries of both $c_{ab}+c_{ba}$ and $c_a + c_b$ are the fundamental class of the Cayley graph, but doubling the cycle like this makes the numbers a little easier.) 

Finally we compute the pairing of the Brooks cocycle $d\phi_{ab}$ with our cycle $C_{a,b}$, using linearity.

For the $c_{ab}$ pairing, all terms cancel except the first one:
$f_{ab}(e,a,ab)=|a^{-1}ab|_{ab}-|ab|_{ab}+|a|_{ab}=-1$ while
$f_{ab}(e,ab,(ab)^2)=|(ab)^{-1}(ab)^2|_{ab}-|(ab)^2|_{ab}+|ab|_{ab}=0$ etc.

For $c_{ba}$ the pairing is -1 again, but this time computed as an infinite sum:
\begin{align*}
d\phi_{ab}(e,b,ba)&=0\\
d\phi_{ab}(e,ba,(ba)^2)&=|ba|_{ab}-|baba|_{ab}+|ba|_{ab}=-1\\
d\phi_{ab}(e,(ba)^2,(ba)^4)&=|baba|_{ab}-|babababa|_{ab}+|baba|_{ab}=1-3+1=-1\\
\vdots\\
\end{align*}

Adding these up with the coefficients 1,1/2,1/4, etc, we get a total of -1.

Hence the pairing of the Brooks cycle with $c_{ab}+c_{ba}-(c_a+c_b)$ is $-2$ demonstrating that both the Brooks cocycle and our cycle are non-trivial as required.

}

\end{document}